\newcommand{\act}{\curvearrowright}
\newcommand{\bmat}{\left(\begin{matrix}}
\newcommand{\emat}{\end{matrix}\right)}
\newcommand{\BZ}{{\mathbb{Z}}}
\newcommand{\BR}{{\mathbb{R}}}
\newcommand{\BC}{{\mathbb{C}}}
\newcommand{\BP}{{\mathbb{P}}}
\newcommand{\BG}{{\mathbb{G}}}
\newcommand{\gD}{\Delta}
\newcommand{\gC}{\Gamma}
\newcommand{\gs}{\sigma}
\newcommand{\gO}{\Omega}
\newcommand{\gep}{\epsilon}
\newcommand{\gl}{\lambda}
\newcommand{\aut}{\text{Aut}}
\newcommand{\Hom}{\text{Hom}}
\newcommand{\Ad}{\text{Ad}}
\newcommand{\diag}{\text{diag}}
\newcommand{\SL}{\text{SL}}
\newcommand{\GL}{\text{GL}}
\newcommand{\SU}{\text{SU}}
\newtheorem{prop}{Proposition}[section]
\newtheorem{thm}[prop]{Theorem}
\newtheorem{lem}[prop]{Lemma}
\newtheorem{cor}[prop]{Corollary}
\newtheorem{conj}[prop]{Conjecture}
\theoremstyle{definition}
\newtheorem{defn}[prop]{Definition}
\newtheorem{rem}[prop]{Remark}
\newtheorem{exam}[prop]{Example}
\newtheorem{prob}[prop]{Problem}
\newtheorem{ques}[prop]{Question}
\newtheorem{clm}[prop]{Claim}
\newcommand\RR{{\mathcal{R}}}
\newcommand\C{\mathbb{C}}
\newenvironment{dedication}
        {\vspace{6ex}\begin{quotation}\begin{center}\begin{em}}
        {\par\end{em}\end{center}\end{quotation}}
\long\def\@savemarbox#1#2{\global\setbox#1\vtop{\hsize\marginparwidth 
  \@parboxrestore\tiny\raggedright #2}}
\begin{document}
\author{Tsachik Gelander}


\date{\today}

\title{$\aut(F_n)$ actions on representation spaces}

\maketitle

\begin{dedication}
\vspace*{.5cm}{Dedicated to the memory of Jacques Tits}
\end{dedication}

\begin{abstract}
J. Wiegold conjectured that if $n\ge 3$ and $G$ is a finite simple group, then the action of $\aut(F_n)$ on $\text{Epi}(F_n,G)$ is transitive. In this note we consider analogous questions where $G$ is a compact Lie group, a non-compact simple analytic group or a simple algebraic group.
\end{abstract}

\section{Introduction}
Let $n$ be an integer. We usually suppose that $n\ge 3$. Let $G$ be a group and consider the representation space $\text{Hom}(F_n,G)$ of all homomorphisms from the free group $F_n$ into $G$. The group $\aut(F_n)$ acts on $\Hom(F_n,G)$ by pre-compositions, 
$$
 \gs\cdot \psi (x)=\psi(\gs^{-1}(x))
$$
where $\gs\in \aut(F_n),\psi\in\Hom(F_n,G)$ and $x\in F_n$. This paper is concerned with various aspects of this action in different contexts. 
By choosing a base for $F_n$ we can identify (non-canonically) $\Hom(F_n,G)$ with $G^n$. 

If $G$ carries an additional structure, $\Hom(F_n,G)$ inherits a related structure which is often preserved by the $\aut(F_n)$ action.
Let us give some examples:
\begin{enumerate}
\item If $G$ is finite so is $\Hom(F_n,G)$ and $\aut(F_n)$ acts by permutations.
\item If $G$ is a topological group, $\Hom(F_n,G)$ with the compact open (equivalently, the pointwise convergence) topology is a topological space and $\aut(F_n)$ acts by homeomorphisms.
\item If $G$ is a locally compact group, the Haar measure on $G$ induces a measure class on $\Hom(F_n,G)$ which is $\aut(F_n)$-invariant.
\item Furthermore, if $G$ is unimodular, the Haar measure on $G$ induces an $\aut(F_n)$-invariant measure on $\Hom(F_n,G)$.
\item If $G$ is a linear algebraic group then $\Hom(F_n,G)$ is an affine algebraic variety and $\aut(F_n)$ acts by algebraic automorphisms.
\end{enumerate}

Most of the time, we focus on the invariant subset of epimorphisms, where in the context of topological (or algebraic) groups we mean homomorphisms with dense (or Zariski dense) image,
$$
 \text{Epi}(F_n,G):=\{f\in\Hom(F_n,G):\overline{f(F_n)}=G\}.
$$ 

An epimorphism $f:F_n\to G$ is called {\it redundant} if there is a non-trivial free product decomposition $F_n=A*B$ such that the restriction of $f$ to $A$ is still an epimorphism, i.e. $\overline{f(A)}=G$, where again the bar denotes the closure (or the Zariski closure) and 
should be ignored in the discrete case. Denote
by $\mathcal{R}(F_n,G)\subset \text{Epi}(F_n,G)$ the set of redundant representations.

One main question we are interested in is whether, under certain natural assumptions on $G$, every epimorphism is redundant. 

\begin{ques}\label{Q:redundant}
Let $G$ be either a finite simple group, a connected compact Lie group or a Zariski connected simple algebraic group and let $n\ge 3$. Does 
$$
 \mathcal{R}(F_n,G)=\text{Epi}(F_n,G)?
$$
\end{ques}

Let us denote by $\mathcal{P}_n\subset F_n$ the set of primitive elements. Recall that an element $p\in F_n$ is primitive iff it belongs to a free basis of $F_n$. Note that if $F_n=A*B$ is a free product decomposition and $b\in B$ is primitive, then the coset $Ab$ consists of primitive elements.
In the discrete case (in particular when $G$ is finite) a map $f:F_n\to G$ is redundant iff $f(\mathcal{P}_n)=G$ or equivalently, $f$ is surjective and $\ker(f)\cap\mathcal{P}_n\ne\emptyset$. When $G$ is a topological group (or an algebraic group) the image of the primitive elements under a redundant map is dense (resp. Zariski dense).

\begin{rem}
Let $n\in\BZ$. Since $F_n$ is finitely generated and residually finite, it is Hopfian, i.e. $\text{Epi}(F_n,F_n)=\aut(F_n)$. 
(Indeed, if $f:F_n\to F_n$ is a surjective homomorphism, and $H\le F_n$ is a subgroup of index $m$ then $f^{-1}(H)$ is of index $m$ as well. Since for every $m<\infty$, there are only finitely many subgroups of index $m$, all these subgroups must be of the form $f^{-1}(H),~[F_n:H]=m$ i.e. they all contain $\text{ker}(f)$, and since $F_n$ is residually finite, it follows that $\text{ker}(f)=\{1\}$.)
Moreover, it follows from the Nielsen theorem that for every $k\le n$, $\aut(F_n)$ acts transitively on $\text{Epi}(F_n,F_k)$. In particular for $k<n$ any epimorphism $F_n\to F_k$ is redundant. See \cite[Chapter 2]{LS} for a detailed description of the Nielsen theorem (in particular see \cite[Proposition 2.12]{LS}).
It follows that for $G$ discrete, an epimorphism $f:F_n\to G$ is redundant iff it factors through an epimorphism from $F_k$ to $G$ with $k<n$.
Nielsen's theorem also produces a convenient generating set for $\aut(F_n)$, namely the Nielsen transformations. With respect to an ordered free basis $(x_1,\ldots,x_n)$ for $F_n$ the Nielsen transformations take the form:
\[
 R_{i,j}^{\pm}:(x_1,\ldots,x_j,\ldots,x_n)\mapsto (x_1,\ldots,x_jx_i^{\pm 1},\ldots,x_n),~i\ne j
\]
\[
 L_{i,j}^{\pm}:(x_1,\ldots,x_j,\ldots,x_n)\mapsto (x_1,\ldots,x_i^{\pm 1}x_j,\ldots,x_n),~i\ne j
\]
\[
 I_j:(x_1,\ldots,x_j,\ldots,x_n)\mapsto (x_1,\ldots,x_j^{-1},\ldots,x_n).
 \]
\end{rem}


\subsection{Finite simple groups}

The following famous conjecture is attributed to J. Wiegold. 

\begin{conj}[Wiegold]
For any finite simple group $G$ and $n\ge 3$, the action of $\aut(F_n)$ on $\text{Epi}(F_n,G)$ is transitive.
\end{conj}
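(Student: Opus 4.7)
The plan is to reduce Wiegold's conjecture to a statement about redundant epimorphisms and then analyze the $\aut(F_n)$-orbits on generating $n$-tuples via Nielsen moves. As noted in the preceding remark, in the discrete setting an epimorphism $f:F_n\to G$ is redundant iff its kernel meets $\PP_n$, equivalently iff $f$ factors through some epimorphism from $F_k$ with $k<n$. Identifying $\Hom(F_n,G)$ with $G^n$ via a fixed basis, the action of $\aut(F_n)$ on generating $n$-tuples is generated by the Nielsen transformations $R_{i,j}^{\pm},L_{i,j}^{\pm},I_j$, so the conjecture is the assertion that this action is transitive.

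First I would try to establish the key reduction: \emph{every generating $n$-tuple $(g_1,\ldots,g_n)\in G^n$ with $n\ge 3$ is Nielsen-equivalent to one of the form $(g_1',\ldots,g_{n-1}',1)$.} Granting this, induction on $n$ reduces every tuple to $(h,h',1,\ldots,1)$ for some generating pair $(h,h')$ of $G$, after which one needs that all such padded tuples lie in a single Nielsen class---and here the trivial coordinates supply moves unavailable to $\aut(F_2)$ alone. So the entire content is to produce, from any generating epimorphism $f$, a primitive element $p\in F_n$ with $f(p)=1$.

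To attack the key reduction I would invoke the classification of finite simple groups and argue case by case. For $\PSL_2(p)$ I would follow Gilman, who used the trace map as a Nielsen-invariant together with the rich action of $\SL_2(\BF_p)$ on triples of prescribed character; for alternating groups I would combine explicit cycle-surgery Nielsen moves with parity and cycle-count invariants; for groups of Lie type the strategy is to build from three of the generators a commutator that can be absorbed into a fourth coordinate, driving it to the identity. The third generator is essential: it supplies the combinatorial slack needed to realize arbitrary Whitehead-type multiplications, which is the combinatorial heart of Nielsen equivalence.

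The principal obstacle is that no uniform argument is known, which is why the conjecture has remained open for decades. The difficulty concentrates in the large simple groups of Lie type, where one lacks a Nielsen-invariant that is simultaneously computable and sharp enough to either separate orbits or force their collapse. Any serious attempt must either find such an invariant---an analogue of the trace map exploited later in this paper for $\SLtC$---or prove connectedness of the product replacement graph on generating tuples of $G$. I would regard a resolution of the case $G=A_m$ for $m$ large as the first concrete milestone.
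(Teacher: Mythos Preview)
The statement you are addressing is a \emph{conjecture}, not a theorem; the paper does not prove it and says so explicitly (``We will not consider here finite simple groups. For what is known and mostly not known concerning the Wiegold conjecture \ldots\ we refer the reader to \ldots\ Lubotzky''). There is therefore no proof in the paper to compare your proposal against.

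Your proposal is not a proof either. You correctly identify the known reduction: by Gilman and Evans (cited in the paper as \cite{Gi,Ev93b}), all redundant epimorphisms lie in a single $\aut(F_n)$-orbit, so Wiegold's conjecture is equivalent to the assertion that every epimorphism $F_n\to G$ with $n\ge 3$ is redundant, i.e.\ that some primitive element lies in the kernel. This is precisely the paper's Question~\ref{Q:redundant} in the finite simple case, and the paper records this equivalence immediately after stating the conjecture. So far your analysis matches the paper.

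The gap is in the ``key reduction'' itself. You propose to invoke the classification of finite simple groups and argue family by family, but you supply no argument for any family beyond a one-line gesture, and you then concede that ``no uniform argument is known, which is why the conjecture has remained open for decades.'' That concession is the accurate summary: the step you label the key reduction \emph{is} the conjecture, and it is open. What you have written is a reasonable survey of where the difficulty lies, but it should be labelled as such rather than as a proof proposal.
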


It was shown in \cite{Gi,Ev93b} that all the redundant representations are in the same $\aut(F_n)$-orbit. Thus the Weigold conjecture is equivalent to the conjecture that any epimorphism from $F_n$ to $G$ is redundant, i.e. to an affirmative answer to Question \ref{Q:redundant}
in this context.

We will not consider here finite simple groups. For what is known and mostly not known concerning the Wiegold conjecture, as well as interesting applications and related questions we refer the reader to the wonderful exposition by Lubotzky \cite{Lub}.\footnote{There is a small overlap between this paper and Lubotzky's exposition \cite{Lub}, but they are mostly complementary.}
Instead, we will consider analogous problems in the context of compact Lie groups, non-compact semisimple Lie groups, semisimple groups over non-archimedean local fields and mainly semisimple algebraic groups.

\begin{rem}
The assumption $n\ge 3$ is necessary (here and below). Indeed if $G$ is a group of matrices (over any field) then the function
$$
 \text{Trace} [f(x_1),f(x_2)]+\text{Trace} [f(x_2),f(x_1)]
$$
where $x_1,x_2\in F_2$ are free generators, and $[\cdot,\cdot]$ denotes the commutator, is $\text{Aut}(F_2)$ invariant and in general non-constant.
\end{rem}


\section{Compact Lie groups}\label{section:compact}

In this section $G$ denotes a connected compact Lie group. We consider the compact open topology on $\Hom(F_n,G)$. Since $G$ is compact, this is a compact space.
The normalized Haar measure $\mu$ on $G$ induces a probability measure $\mu_n$ on $\Hom(F_n,G)$; concretely, by fixing a base for $F_n$ we may identify $\Hom(F_n,G)$ with $G^n$ and take the product measure $\mu_n=\mu^n$. Since $G$ is unimodular, $\mu_n$ is $\aut(F_n)$-invariant. Indeed, it is enough to verify this claim for Nielsen transformations, whose effect on $G^n$ can be described by multiplying one coordinate from the left or from the right by another or by taking the inverse of one coordinate, and the Haar measure is invariant under these operations.

What makes the compact Lie group case simpler than other scenarios is that up to measure zero we have:
$$
 \mathcal{R}(F_n,G)=\text{Epi}(F_n,G)=\Hom(F_n,G),
$$
where the second equality holds for $n\ge 2$ and the first one for $n\ge 3$. 
Furthermore, if $G$ is perfect, i.e. $G=G'$, then $\mathcal{R}(F_n,G)$ is also open.
These facts are immediate consequences of the following lemma (see \cite{Ge08} for more details):

\begin{lem}\label{lem:pairs}\cite[Lemma 1.4]{Ge08}
Let $G$ be a connected compact Lie group.
Let 
$$
 \gO=\{(x,y)\in G^2: \langle x,y\rangle~\text{is dense in}~G\}.
$$ 
Then $\gO$ is dense and of full measure. Furthermore, if $G$ is perfect, it is also open.
\end{lem}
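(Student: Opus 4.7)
The plan is to establish full measure first, deduce density from the fact that $\mu^2$ has full support, and treat the openness assertion for perfect $G$ as a separate argument.

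\textbf{Full measure.} The complement $\Omega^c$ consists of pairs $(x,y)\in G^2$ both lying in some maximal proper closed subgroup $H\leq G$. Write $G$ as an almost-direct product $G_1\cdot T$ of a compact semisimple group $G_1$ and a central torus $T$. By the Borel--de Siebenthal and Dynkin classification, the maximal proper closed subgroups of $G_1$ fall into finitely many $G_1$-conjugacy classes. For a representative $H_0$ of such a class, the set $\bigcup_{g\in G}(gH_0g^{-1})^2$ is the smooth image of $G\times H_0\times H_0 \to G^2$, $(g,a,b)\mapsto(gag^{-1},gbg^{-1})$, and hence has dimension at most $\dim G+2\dim H_0<2\dim G$, so it is $\mu^2$-null. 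The torus factor contributes only countably many proper closed subgroups (subtori times finite groups), each null when squared. Thus $\Omega^c$ is contained in a countable union of null sets. Density then follows because the Haar measure has full topological support.

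\textbf{Openness when $G$ is perfect.} Assume $(x_0,y_0)\in\Omega$ and, for contradiction, take $(x_n,y_n)\to(x_0,y_0)$ with $H_n:=\overline{\langle x_n,y_n\rangle}\neq G$ for every $n$. By compactness of the Chabauty space of closed subgroups, pass to a subsequence with $H_n\to H$ in Chabauty. Since $x_0,y_0\in H$, we have $H\supseteq\overline{\langle x_0,y_0\rangle}=G$, so $H=G$. The crux is therefore to rule out that a sequence of proper closed subgroups of a connected compact perfect Lie group Chabauty-converges to $G$. The example $\mathbb{Z}/n\mathbb{Z}\subset S^1$ shows this fails for tori, so perfectness is essential here.

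To finish the argument I would establish a uniform gap: there exists $\delta=\delta(G)>0$ such that no proper closed subgroup $H\leq G$ is $\delta$-dense in $G$. The ingredients are that a proper closed subgroup is a submanifold of codimension $\geq 1$, whose $\varepsilon$-neighborhood has $\vol\leq C\cdot\varepsilon\cdot\vol(H)$, and that the volume of $H$ together with the component count $|H/H^0|$ is uniformly bounded in the semisimple setting (using that $N(H^0)/H^0$ is finite of bounded order and that maximal connected subgroups fall into finitely many conjugacy classes). This contradicts $H_n\to G$, which would eventually force $\delta$-density on $H_n$. I expect this uniform-gap statement to be the main technical obstacle; the rest is structural bookkeeping.
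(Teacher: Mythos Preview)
The paper does not prove this lemma here; it simply cites \cite[Lemma~1.4]{Ge08}. So let me evaluate your argument on its own terms.

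Your dimension bound in the full-measure step is wrong as written. You claim the image of $G\times H_0\times H_0\to G^2$ has dimension at most $\dim G+2\dim H_0<2\dim G$, which would require $\dim H_0<\tfrac12\dim G$; this fails already for $H_0=S(U(2)\times U(1))\le\SU(3)$, where $\dim H_0=4=\tfrac12\dim\SU(3)$. The repair is straightforward once you see it: the map $(g,a,b)\mapsto(gag^{-1},gbg^{-1})$ has fibres of dimension at least $\dim H_0$ (replace $g$ by $gh$ with $h\in H_0$ and conjugate $a,b$ accordingly), so the image actually has dimension at most $\dim G+\dim H_0<2\dim G$. A more structural gap is your handling of the almost-direct product $G=G_1\cdot T$: a maximal proper closed subgroup of $G$ is \emph{not} in general of the form (subgroup of $G_1$)$\cdot$(subgroup of $T$), so the sentence ``the torus factor contributes only countably many proper closed subgroups'' does not combine with the semisimple analysis to cover $\Omega^c$. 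One way to patch this is to split according to the image of $\overline{\langle x,y\rangle}$ in the torus $G/G_1$: if that image is proper you land in the preimage of one of countably many closed subgroups of $G/G_1$; if it is all of $G/G_1$ then $x,y\in N_G(H^0)$ for the proper connected subgroup $H^0=\overline{\langle x,y\rangle}^0$, and one is reduced to showing there are only countably many conjugacy classes of such $H^0$.

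For openness in the perfect case your Chabauty-plus-gap plan is sound in outline, but as you concede the gap lemma is left unproved, so the argument is incomplete. A shorter route, entirely parallel to the proof of Lemma~\ref{lem:Z-dense-open} later in this paper, avoids the gap: choose words $W_1,\ldots,W_N$ so that $\{\Ad(W_j(x_0,y_0))\}$ spans $\mathrm{span}(\Ad(G))$, and use Jordan's theorem to pick $m$ with $[x_0^m,y_0^m]\ne 1$. Both conditions are open in $(x,y)$; together they force $\overline{\langle x,y\rangle}$ to be infinite with identity component whose Lie algebra is $\Ad(G)$-invariant, hence a nonzero ideal of $\mathfrak g$. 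When $G$ is simple this finishes immediately; for general semisimple $G$ one applies the same reasoning to each simple quotient.
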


The ergodicity part of the following theorem from \cite{Ge08} confirmed a conjecture of Goldman, and the minimality, though not stated in \cite{Ge08}, is implicit in the  
argument which served to prove of a conjecture of Margulis and Soifer. Recall that a group action on a topological space is minimal if every orbit is dense.

\begin{thm}\label{thm:Goldman-Conjecture}
Let $G$ be a connected compact semisimple Lie group and let $n\ge 3$. Then the $\aut(F_n)$ action on $\mathcal{R}(F_n,G)$ is both ergodic and minimal. 
\end{thm}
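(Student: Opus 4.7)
The plan is to prove ergodicity and minimality separately, both via the coordinate-wise action of the Nielsen generators on $G^n\cong\Hom(F_n,G)$ combined with Lemma~\ref{lem:pairs}. For \emph{ergodicity}, let $\phi\in L^2(G^n,\mu^n)$ be $\aut(F_n)$-invariant. For fixed $k$, the transformations $R_{i,k}^{\pm}$ (with $i\ne k$) preserve every coordinate except the $k$-th, which is right-multiplied by $g_i^{\mp 1}$; composing these, $\phi$ is invariant under right-translation of $g_k$ by any element of $\langle g_i:i\ne k\rangle$. Since $n-1\ge 2$, Lemma~\ref{lem:pairs} ensures this subgroup is dense in $G$ for $\mu^{n-1}$-a.e.\ tuple $(g_i:i\ne k)$. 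By strong continuity of the right regular representation of $G$ on $L^2(G,\mu)$, any $L^2$-function invariant under right-translation by a dense subgroup of $G$ is $\mu$-a.e.\ constant; together with Fubini, $\phi$ is essentially independent of the $k$-th coordinate. Iterating over $k$, $\phi$ is $\mu^n$-a.e.\ constant.

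For \emph{minimality}, given $f\in\mathcal R(F_n,G)$, my goal is to show that the $\aut(F_n)$-orbit closure $C_f\subseteq G^n$ has non-empty interior; since $C_f$ is closed and $\aut(F_n)$-invariant, ergodicity then forces $C_f$ to have full Haar measure, whence $C_f=G^n\supseteq\mathcal R(F_n,G)$. After replacing $f$ by some $\sigma\cdot f$, I may assume $\overline{\langle f_1,\ldots,f_{n-1}\rangle}=G$ (extending a free basis of a redundant free factor to all of $F_n$). I then expand $C_f$ by modifying coordinates from right to left. At step~$1$, $R_{i,n}^\pm$ right-translates $f_n$ by the dense subgroup $\langle f_1,\ldots,f_{n-1}\rangle$, so $C_f\supseteq\{f_1\}\times\cdots\times\{f_{n-1}\}\times U_1$ with $U_1:=G$. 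Inductively, for $k\ge 2$ I set
\[
V_k:=\{\mathbf u\in U_{k-1}:\ \overline{\langle f_1,\ldots,f_{n-k},\mathbf u\rangle}=G\}\subseteq G^{k-1},\qquad U_k:=G\times V_k\subseteq G^k.
\]
By perfectness of $G$ (as in Lemma~\ref{lem:pairs}), $V_k$ is open; it is non-empty because $(f_{n-k+1},\ldots,f_{n-1})$ is always a witness, since the corresponding subgroup reconstitutes the assumed dense $\overline{\langle f_1,\ldots,f_{n-1}\rangle}$. The inductive step: from $(f_1,\ldots,f_{n-k},g,\mathbf u)\in C_f$ with $(g,\mathbf u)\in U_k$, the transformations $R_{j,n-k}^\pm$ right-translate $f_{n-k}$ by elements of $\langle f_1,\ldots,f_{n-k-1},g,\mathbf u\rangle$, which is dense precisely when $(g,\mathbf u)\in V_{k+1}$; hence $C_f\supseteq\{f_1\}\times\cdots\times\{f_{n-k-1}\}\times U_{k+1}$. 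After $n-1$ such steps, $C_f$ contains $U_n=G\times V_n$, an open non-empty subset of $G^n$.

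The main obstacle is the inductive bookkeeping for minimality: at each stage one must verify that $V_k$ remains non-empty and open in $G^{k-1}$. Non-emptiness is handled by systematically carrying the original tail $(f_{n-k+1},\ldots,f_{n-1})$ as a witness, which works precisely because prepending $f_{n-k}$ restores the given dense subgroup $\overline{\langle f_1,\ldots,f_{n-1}\rangle}$; openness relies on the perfectness of $G$ (as in Lemma~\ref{lem:pairs}), which ensures that density of the subgroup generated by a tuple is an open condition.
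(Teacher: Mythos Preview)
Your ergodicity argument is essentially the paper's: both fix all coordinates but one, use Lemma~\ref{lem:pairs} to get that the remaining $n-1$ coordinates a.e.\ generate densely, and conclude that any invariant object is independent of each coordinate. The paper phrases this with invariant measurable sets and Fubini, you with $L^2$-functions and strong continuity of the regular representation; these are equivalent formulations.

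Your minimality argument is correct but organized differently from the paper's. The paper first pushes $f$ into the open set $\Omega=\bigcap_i\mathcal O_i$ (using the auxiliary sets $\mathcal D(S)$ and $\mathcal D'(S)$), and then, \emph{staying inside $\Omega$}, moves each coordinate in turn to hit an arbitrary target neighborhood; this yields density of the orbit directly, without invoking ergodicity. You instead perform a one-directional sweep, freeing the coordinates $n,n-1,\ldots,1$ successively to produce an open set $U_n\subset C_f$, and then appeal to the ergodicity already established (together with full support of Haar measure) to upgrade ``nonempty interior'' to $C_f=G^n$. Both approaches rest on the same substantive input---that for semisimple (hence perfect) $G$ the condition ``$\overline{\langle S\rangle}=G$'' is open in the tuple $S$, which is the content behind your openness of each $V_k$ and behind the paper's lemma on $\mathcal D(S)$. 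Your route is slightly more economical in that it avoids introducing $\mathcal D'$ and $\Omega$, at the cost of coupling minimality to ergodicity; the paper's route keeps the two conclusions logically independent and makes the ``stay in $\Omega$'' mechanism explicit, which is the form reused later in the algebraic-group setting.
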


The theorem is formulated using new terminology and presenting its short proof will allow us to introduce additional useful terms and ideas.

Fix a basis $\{x_1,\ldots,x_n\}$ of $F_n$ and set 
$$
 \mathcal{O}_i=\{f:F_n\to G: \langle \{ f(x_j),j\ne i\}\rangle~\text{is dense in}~G\},~i\in\{1,\ldots,n\}.
$$
Then $\mathcal{R}(F_n,G)=\bigcup_{\gs\in\aut(F_n)}\gs\cdot\mathcal{O}_n$. Let us also define the set of {\it strongly redundant} representations as follows:
$$
 \mathcal{SR}(F_n,G)=\bigcap_{\gs\in\aut(F_n)}\gs\cdot\mathcal{O}_n.
$$
Then $\rho\in\Hom(F_n,G)$ is strongly redundant iff its restriction to any free factor of rank $n-1$ is dense.
Since $\aut(F_n)$ is countable, it follows from Lemma \ref{lem:pairs} that $\mathcal{SR}(F_n,G)$ is of full measure and of second category.

\begin{proof}[Proof of ergodicity]
In order to prove that the action of $\aut(F_n)$ on $\text{Hom}(F_n,G)$ is ergodic, it is enough to restrict to the invariant subset $\mathcal{SR}(F_n,G)$. Suppose by way of contradiction that there is an $\aut(F_n)$ invariant subset $A\subset \mathcal{SR}(F_n,G)$ which is neither null nor co-null. Making use again of the fact that $\aut(F_n)$ is countable, we may suppose that $A=\bigcap_{\aut{F_n}}\gs\cdot A$, i.e. that $A$ is set-theoretically invariant rather than measurably. 
In view of Fubini's theorem the set $A$ is not independent of at least one coordinate, i.e. 
there must be $i\in\{1,\ldots,n\}$ and $g_j\in G, j\ne i$ such that 
$$
 B=\{f(x_i):f\in A, f(x_j)=g_j, \forall j\ne i\}
$$
is neither null nor co-null in $G$. Since $A$ is $\aut(F_n)$-invariant, $B$ is invariant under left multiplication by $g_j,~j\ne i$. But $\langle \{ g_j, j\ne i\}\rangle$ is dense and hence acts ergodically on $G$. A contradiction. 
\end{proof}

For the proof of minimality it is convenient to introduce some more notations. 
For a subset $S\subset G$ let $\mathcal{D}(S)$ denote the set of elements which together with $S$ generate $G$:
$$ 
 \mathcal{D}(S):=\{g\in G: \overline{\langle S,g\rangle}=G\}.
$$

Suppose that $G$ is as in Theorem \ref{thm:Goldman-Conjecture}.

\begin{lem}
If $\mathcal{D}(S)$ is nonempty then it is open and dense.
\end{lem}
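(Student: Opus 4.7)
The plan is to reduce to the extension of Lemma \ref{lem:pairs} to $(N+1)$-tuples, together with a structural argument on closed subgroups for density. Since $\overline{\langle S,g\rangle}=\overline{\langle H,g\rangle}$ for $H:=\overline{\langle S\rangle}$ and every $g\in G$, we have $\mathcal{D}(S)=\mathcal{D}(H)$, and this set is bi-$H$-invariant.

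Fix $g_0\in\mathcal{D}(H)$. Taking a countable dense sequence $(h_j)_{j\geq 1}$ in $H$, the closed subgroups $K_j:=\overline{\langle h_1,\ldots,h_j,g_0\rangle}$ form a non-decreasing chain with dense union in $\overline{\langle H,g_0\rangle}=G$. Since $G$ has finite dimension, $K_j$ must stabilise at some index $N$, and by denseness of the union the stable value is $G$. Thus $(h_1,\ldots,h_N,g_0)$ lies in the set $\Omega_{N+1}\subseteq G^{N+1}$ of dense-generating $(N+1)$-tuples; the proof of Lemma \ref{lem:pairs} extends verbatim to show that $\Omega_{N+1}$ is open (as $G$ is perfect, being semisimple), dense, and of full Haar measure.

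Openness of $\mathcal{D}(H)$ follows directly: openness of $\Omega_{N+1}$ at $(h_1,\ldots,h_N,g_0)$ provides an open neighbourhood $V$ of $g_0$ in $G$ such that $(h_1,\ldots,h_N,g)\in\Omega_{N+1}$ for every $g\in V$. For such $g$, $\overline{\langle H,g\rangle}\supseteq\overline{\langle h_1,\ldots,h_N,g\rangle}=G$, so $V\subseteq\mathcal{D}(H)$.

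The hard part is density. The complement $\mathcal{D}(H)^c$ is closed (from openness of $\mathcal{D}(H)$) and equals the union $\bigcup_K K$, where $K$ ranges over proper closed subgroups of $G$ containing $H$. Each such $K$ has $\dim K<\dim G$ (else $K^0=G$, forcing $K=G$), hence is nowhere dense in $G$. The main obstacle is controlling the potentially uncountable family of such $K$'s; I would invoke the classification of closed connected Lie subgroups of the compact semisimple $G$ --- only finitely many conjugacy classes of Lie subalgebras of $\mathfrak{g}$ containing $\Lie(H^0)$ arise in dimensions below $\dim G$ --- together with a Baire-category argument on $\sub(G)$ with the Chabauty topology. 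The hypothesis $\mathcal{D}(H)\neq\emptyset$ plays a crucial role: it forces every candidate $K$ to avoid the open neighbourhood $V$ of $g_0$ found above, ultimately confining $\mathcal{D}(H)^c$ to a closed, nowhere-dense subset of $G$, so $\mathcal{D}(H)$ is dense.
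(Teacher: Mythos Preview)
Your openness argument contains a real gap. The assertion that the chain $K_j=\overline{\langle h_1,\ldots,h_j,g_0\rangle}$ must stabilise ``since $G$ has finite dimension'' is not justified: finite dimensionality alone does not force an ascending chain of closed subgroups of a compact Lie group to stabilise (consider $\BZ/2^j\BZ$ inside the circle). What you do get is that the \emph{dimensions} $\dim K_j$ stabilise, hence the identity components eventually equal some fixed $K^0$; but if $K^0\ne G$ you are left with a strictly increasing chain of finite groups $K_j/K^0$ with dense union in the semisimple quotient $G/K^0$. Ruling this out is possible --- one route uses Jordan's theorem to show that all $(M!)$-th powers of elements of $\bigcup_j K_j/K^0$ pairwise commute, forcing a dense subset of $G/K^0$ into a proper closed abelian subgroup --- but it is a substantive step you have not supplied.

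Your density argument is not a proof but a programme. You correctly isolate the obstacle (the family of proper closed $K\supseteq H$ may be uncountable), yet ``I would invoke the classification of closed connected Lie subgroups \ldots together with a Baire-category argument on $\sub(G)$'' does not actually carry anything out. Finitely many conjugacy classes of subalgebras still leaves a continuum of subgroups in each class, so a straight Baire argument does not apply, and you do not say what compactness or measure statement on the Chabauty space would close the gap. Note that the paper itself gives no self-contained proof here either --- it simply defers to \cite[\S1.1]{Ge08}. A cleaner line, closer in spirit to that reference and to Lemma~\ref{lem:pairs}, is to aim for the measure-theoretic statement that $\mathcal{D}(S)^c$ is Haar-null; combined with the openness you (almost) established, that immediately yields density.
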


\noindent
This is implicit in \cite[\S 1.1]{Ge08}.\qed

\medskip

Let $\mathcal{D}'(S)=\bigcap_{s\in S}\mathcal{D}(S\setminus\{s\})$. 

\begin{lem}
Suppose that $S$ is a finite (topological) generating set for $G$, then $\mathcal{D}'(S)$ is open and dense. 
\end{lem}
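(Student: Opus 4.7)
The plan is to reduce this directly to the preceding lemma. Since $S$ is finite, $\mathcal{D}'(S)=\bigcap_{s\in S}\mathcal{D}(S\setminus\{s\})$ is a finite intersection, so it suffices to show that each factor $\mathcal{D}(S\setminus\{s\})$ is open and dense, and then invoke the Baire property (or just note that in any topological space, a finite intersection of open dense sets is open and dense).

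By the previous lemma, to conclude that $\mathcal{D}(S\setminus\{s\})$ is open and dense it is enough to show it is nonempty. The key observation is that $s$ itself is a witness: since $S$ topologically generates $G$,
\[
\overline{\langle S\setminus\{s\},\,s\rangle}=\overline{\langle S\rangle}=G,
\]
which means exactly that $s\in\mathcal{D}(S\setminus\{s\})$. Hence $\mathcal{D}(S\setminus\{s\})\neq\emptyset$ for every $s\in S$, and the previous lemma gives openness and density.

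There is essentially no obstacle here; the statement is a formal consequence of the previous lemma together with the fact that $S$ is finite. The only mild subtlety is to remember that the hypothesis on $G$ (a connected compact semisimple Lie group, as fixed before the preceding lemma) is the hypothesis needed for the preceding lemma to apply, and that we need $S$ finite in order that the intersection defining $\mathcal{D}'(S)$ remain open and dense (an arbitrary intersection of open dense sets need not even be open).
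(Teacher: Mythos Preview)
Your proof is correct and follows exactly the same approach as the paper's: each $\mathcal{D}(S\setminus\{s\})$ is nonempty because it contains $s$ itself, hence open and dense by the previous lemma, and $\mathcal{D}'(S)$ is a finite intersection of such sets. The paper's version is simply more terse.
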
 

\begin{proof}
For $s\in S$ the set $\mathcal{D}(S\setminus\{s\})$ is nonempty since it contains $s$. Thus $\mathcal{D}'(S)$ is an intersection of finitely many open dense sets.
\end{proof}

Abusing notations, for $f\in \Hom(F_n,G)$ let us denote 
$$
 \mathcal{D}'(f)=\mathcal{D}'(\{f(x_1),\ldots,f(x_n)\}).
$$ 
The above lemma says that for $f\in \text{Epi}(F_n,G)$, $\mathcal{D}'(f)$ is open and dense in $G$. 
Let $\gO=\cap_{i=1}^n\mathcal{O}_i$.
That is, 
$$
 \gO=\{f\in\Hom(F_n,G):\overline{\langle \{ f(x_j),j\ne i\}\rangle}=G,~\forall i\in\{1,\ldots,n\}\}.
$$

\begin{proof}[Proof of minimality]
Let $f\in \mathcal{R}(F_n,G)$ be a redundant representation. Up to applying some element from $\aut(F_n)$ we may suppose that $f\in \mathcal{O}_n$. Then $f(x_1),\ldots,f(x_{n-1})$ generate a dense subgroup and hence by acting by left multiplications on the last coordinate we can take it into $\mathcal{D}'(\{f(x_1),\ldots,f(x_{n-1})\})$. This allows us to assume that $\gs\cdot f\in \gO$ for an appropriate $\gs\in\aut(F_n)$.
Now $\gO$ is open and dense in $\Hom(F_n,G)$ and by applying transformations 
corresponding to left multiplications of one coordinate by words in the other coordinates 
we may move every coordinate into any neighborhood in $G$ without leaving $\gO$. This shows that the orbit of $f$ is dense in $\Hom(F_n,G)$.
\end{proof}

Since the ergodicity result holds for any compact connected group $G$\footnote{Even the assumption that $G$ is semisimple is unnecessary (see \cite{Ge08}).}, 
we obtain for free:

\begin{cor}
Let $n\ge 3$ and $G$ a connected compact Lie group. Then the action of $\aut(F_n)$ on $\Hom(F_n,G)$ is weakly mixing.
\end{cor}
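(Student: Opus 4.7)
The plan is to invoke the standard ergodic-theoretic characterization: a measure-preserving action of a countable group on a probability space $(X,\mu)$ is weakly mixing if and only if the diagonal action on $(X\times X,\mu\times\mu)$ is ergodic. In our setting $X=\Hom(F_n,G)$ carries the probability measure $\mu_n$ preserved by the countable group $\aut(F_n)$, as established at the start of Section \ref{section:compact}, so this equivalence applies.

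The key observation is the natural identification
$$
 \Hom(F_n,G)\times \Hom(F_n,G)\;\cong\; \Hom(F_n,G\times G),
$$
sending a pair $(f_1,f_2)$ to the representation $x\mapsto (f_1(x),f_2(x))$. Under this identification the diagonal $\aut(F_n)$-action on the left corresponds precisely to the precomposition action on the right. Moreover, the Haar measure on $G\times G$ is the product of the Haar measures on the factors, so after identifying $\Hom(F_n,G\times G)$ with $(G\times G)^n$ via a basis, the induced probability measure is exactly $\mu_n\times\mu_n$.

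Now $G\times G$ is again a connected compact Lie group. Invoking the ergodicity assertion of Theorem \ref{thm:Goldman-Conjecture} applied to $G\times G$, in the version indicated by the footnote (which drops the semisimplicity hypothesis), the $\aut(F_n)$-action on $\Hom(F_n,G\times G)$ is ergodic. Transporting this ergodicity across the identification above shows that the diagonal action on $\Hom(F_n,G)\times\Hom(F_n,G)$ is ergodic, which by the initial equivalence is precisely weak mixing of the original action.

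The only nontrivial input is appealing to the ergodicity statement for arbitrary connected compact Lie groups rather than merely semisimple ones; once that form of the theorem is in hand, the corollary is a one-line bootstrap, exactly as advertised by the phrase \emph{``we obtain for free''}. In fact the same argument applied to $G^k$ for every $k\ge 1$ yields the stronger statement that all finite Cartesian powers of the action are ergodic, but this is already subsumed by weak mixing for countable group actions.
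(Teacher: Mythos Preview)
Your proof is correct and follows essentially the same route as the paper: identify $\Hom(F_n,G)\times\Hom(F_n,G)$ with $\Hom(F_n,G\times G)$, note that $G\times G$ is again a connected compact Lie group, and apply the ergodicity result (in its general form, dropping semisimplicity) to conclude double ergodicity, hence weak mixing. The paper's proof is precisely this argument, stated more tersely.
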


Recall that a p.m.p action $\gC\act X$ is weakly mixing if and only if it is doubly ergodic, i.e. if the diagonal action $\gC\act X\times X$ is ergodic.

\begin{proof}
We need to show that the action is doubly ergodic. However 
$$
 \Hom(F_n,G)\times\Hom(F_n,G)
$$ 
is canonically isomorphic to $\Hom(F_n,G\times G)$. Since $G^2$ is a connected compact Lie group, the result follows from Theorem \ref{thm:Goldman-Conjecture}.
\end{proof}


\subsection{Invariant measure classification}
Given a closed subgroup $H\le G$, the Haar measure of $H$ induces an invariant measure on the invariant set $\text{Epi}(F_n,H)\subset\Hom(F_n,G)$. We shall call such a measure {algebraic}. More generally we say that an invariant measure $\mu$ on $\Hom(F_n,G)$ is {\it algebraic} if there is a closed subgroup $H\le G$ such that $\mu$ is supported on $\text{Epi}(F_n,H)$ and the restriction of $\mu$ to each fiber of the quotient $\text{Epi}(F_n,H)\to\text{Epi}(F_n,H/H^\circ)$ coincides with the Haar measure of $(H^\circ)^n$, where $H^\circ$ is the identity connected component of $H$. This formulation allows us to avoid dealing with finite groups. In analogy to Ratner's theorem we conjecture:

\begin{conj}[Measure classification conjecture, suggested by Nir Avni]
Let $G$ be a compact Lie group and $n\ge 3$. Then every $\aut(F_n)$-invariant ergodic measure on $\Hom(F_n,G)$ is algebraic.
\end{conj}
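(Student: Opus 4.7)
The plan is to mimic a Ratner-style measure-rigidity scheme, treating Nielsen transformations as a stand-in for unipotent flows.

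\emph{Step 1: reduction to dense image.} The map $f\mapsto H(f):=\overline{f(F_n)}$ is $\aut(F_n)$-invariant, since each Nielsen transformation replaces one generating tuple of $H(f)$ by another. By ergodicity of $\mu$ the subgroup $H(f)$ is essentially constant, equal to some closed $H\le G$, so $\mu$ concentrates on $\text{Epi}(F_n,H)$; after replacing $G$ by $H$ I may assume $\mu$ is supported on homomorphisms with dense image.

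\emph{Step 2: reduction to $G$ connected.} Push $\mu$ forward via $\pi\colon G^n\to(G/G^\circ)^n$. The pushforward $\pi_*\mu$ is an invariant ergodic probability on the finite set $\text{Epi}(F_n,G/G^\circ)$, and hence is uniform on a single $\aut(F_n)$-orbit $\mathcal{O}$. Disintegrate $\mu=\int_{\mathcal{O}}\mu_\xi\,d(\pi_*\mu)(\xi)$; the conditionals $\mu_\xi$ are supported on cosets of $(G^\circ)^n$ and are $\aut(F_n)$-translates of each other. Fixing $\xi_0\in\mathcal{O}$, the finite-index stabilizer $\Gamma_{\xi_0}\le\aut(F_n)$ preserves $\mu_{\xi_0}$ and acts ergodically on it, since any $\Gamma_{\xi_0}$-invariant subset of the fiber generates, under $\aut(F_n)$, a measurable invariant set for $\mu$. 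Identifying the fiber with $(G^\circ)^n$ via a base-point choice reduces the problem to the analogous statement for $\Gamma_{\xi_0}\le\aut(F_n)$ acting on $(G^\circ)^n$; from here on I would assume $G$ is connected.

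\emph{Step 3: bi-invariance of conditionals.} Disintegrate $\mu=\int\mu_{\bar g}\,d\nu(\bar g)$ along the projection to the first $n-1$ coordinates, where $\bar g=(g_1,\dots,g_{n-1})$. The Nielsen transformations $R_{i,n}$ and $L_{i,n}$ with $i<n$ fix the first $n-1$ coordinates and act on the $n$-th by right- (resp.\ left-) multiplication by $g_i^{\pm1}$. Since they preserve $\mu$, the conditional $\mu_{\bar g}$ on $G$ is invariant under left- and right-multiplication by each $g_i$ with $i<n$, and therefore by $K(\bar g):=\overline{\langle g_1,\dots,g_{n-1}\rangle}$. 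A bi-$K(\bar g)$-invariant probability on $G$ is a convex combination of normalized Haar measures on $K(\bar g)$-double cosets.

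\emph{Step 4: the main obstacle.} What remains is to show that $K(\bar g)=G$ for $\nu$-a.e.\ $\bar g$, i.e.\ that $\mu$ concentrates on strongly redundant representations. Granting this, $\mu_{\bar g}$ is bi-$G$-invariant and so coincides with Haar measure on $G$; the same argument applied to each of the $n$ coordinates in turn (using permutation Nielsen moves) yields bi-invariance in each variable separately, and an elementary disintegration argument then forces $\mu=\text{Haar}_G^n$. Proving that $K(\bar g)=G$ almost surely is the heart of the conjecture: one would want to combine the ergodic decomposition of $\nu$ under the natural copy of $\aut(F_{n-1})\le\aut(F_n)$ with Nielsen moves that mix $g_n$ back into the first $n-1$ coordinates, to rule out an ergodic invariant measure concentrated on $n$-tuples generating $G$ while no $(n-1)$-subtuple does. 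No analogue of the polynomial divergence of unipotent flows exploited in Ratner's theorem is immediately at hand in this setting, and this step is essentially a measure-theoretic strengthening of Question~\ref{Q:redundant}; I expect it to require genuinely new ideas.
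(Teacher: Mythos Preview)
The statement you are attempting to prove is presented in the paper as an \emph{open conjecture}; the paper offers no proof, so there is nothing to compare your argument against. What the paper does prove is the much weaker fact that the Haar measure itself is ergodic (Theorem~\ref{thm:Goldman-Conjecture}), and your Steps~3--4 are essentially an attempt to run that argument in reverse: in the paper one \emph{knows} the measure is Haar, deduces via Fubini that a hypothetical invariant set would have to be independent of each coordinate, and gets a contradiction from the fact that $\langle g_j:j\ne i\rangle$ is dense a.e.; you are trying to start from an unknown invariant $\mu$, use the same Nielsen moves to force bi-invariance of the conditionals, and conclude $\mu$ is Haar.

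You have correctly located the genuine gap. Steps~1--3 are sound (Step~1 uses that $f\mapsto\overline{f(F_n)}$ is a Borel, literally $\aut(F_n)$-invariant map into the Chabauty space of closed subgroups, hence a.e.\ constant by ergodicity; Step~3 is exactly the mechanism behind the paper's ergodicity proof). But Step~4 is not a proof: showing that $\mu$-a.e.\ tuple is strongly redundant is precisely the content of the conjecture once one strips away the soft reductions, and as you yourself note, it is a measure-theoretic strengthening of the already open Question~\ref{Q:redundant}. Nothing in the paper, and nothing in your outline, gives a mechanism to rule out an ergodic invariant measure supported on $\text{Epi}(F_n,G)\setminus\mathcal{SR}(F_n,G)$. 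Your proposal is therefore a correct reformulation of the difficulty rather than a proof; it does not go beyond what the paper already implicitly contains.

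One small caution on Step~2: after passing to a coset of $(G^\circ)^n$ and the finite-index stabiliser $\Gamma_{\xi_0}$, the induced action on $(G^\circ)^n$ is by \emph{affine} maps (Nielsen moves twisted by the fixed coset representatives), not by the standard $\aut(F_n)$-action, so you are not literally reduced to the connected case of the same conjecture. This does not affect the identification of the main obstacle, but it would need care in any eventual argument.
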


Restricting to connected groups we have the following formulation:

\begin{conj}[Unique ergodicity conjecture]
Let $G$ be a connected compact Lie group and $n\ge 3$. Then the only $\aut(F_n)$-invariant ergodic measure on $\text{Epi}(F_n,G)$ is the Haar measure on $G^n$.
\end{conj}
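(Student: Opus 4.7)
The plan is to use the explicit shearing action of the Nielsen generators to show that, conditionally on all but one coordinate, the $\mu$-distribution of the remaining coordinate is forced to be Haar, and then to iterate this coordinate by coordinate.

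First, fix a free basis $x_1,\ldots,x_n$ of $F_n$, identify $\Hom(F_n,G)$ with $G^n$, and for each $j$ let $\pi_j:G^n\to G^{n-1}$ be the projection that drops the $j$-th coordinate. The Nielsen transformation $R_{i,j}^{\pm}$ (for $i\ne j$) acts on $G^n$ by $(g_1,\ldots,g_n)\mapsto(g_1,\ldots,g_j g_i^{\mp 1},\ldots,g_n)$, i.e.\ it fixes $\pi_j$ and translates the $j$-th fibre by right multiplication by $g_i^{\mp 1}$. Disintegrate $\mu=\int \mu_j^y\,d(\pi_j)_*\mu(y)$ over $\pi_j$. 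Invariance of $\mu$ under $R_{i,j}^{\pm}$ together with the fact that these moves preserve $\pi_j$ implies that, for $(\pi_j)_*\mu$-a.e.\ $y=(g_i)_{i\ne j}$, the fibre measure $\mu_j^y$ on $G$ is invariant under right multiplication by every $g_i$, $i\ne j$.

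Next, assume for the moment that $\mu$ is supported on the strongly redundant set $\mathcal{SR}(F_n,G)=\bigcap_{k}\mathcal{O}_k$. Then for each $j$ and for $(\pi_j)_*\mu$-a.e.\ $y$, the subgroup $\overline{\langle g_i:i\ne j\rangle}$ equals $G$, so $\mu_j^y$ is invariant under a dense subgroup and hence under all right translations; by uniqueness of Haar measure, $\mu_j^y$ is the Haar measure $\lambda$ on $G$ for every $j$. A short iterated-conditioning argument on product test functions $f_1(g_1)\cdots f_n(g_n)$, using the tower property to pass from ``conditional on all other coordinates is Haar'' to ``conditional on any subset is Haar,'' then gives $\int f_1\cdots f_n\,d\mu=\prod_i\int f_i\,d\lambda$, so $\mu=\lambda^n$.

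The hard part, and in my view the main obstacle, is to rule out an $\aut(F_n)$-invariant ergodic measure on $\text{Epi}(F_n,G)$ that is \emph{not} supported on $\mathcal{SR}(F_n,G)$. Since $\mathcal{SR}(F_n,G)$ is $\aut(F_n)$-invariant, ergodicity gives $\mu(\mathcal{SR})\in\{0,1\}$; the task is to exclude the value $0$. The heuristic is that although Lemma~\ref{lem:pairs} shows $\mathcal{SR}$ is generic, a singular $\mu$ could \emph{a priori} concentrate on tuples where some $(n-1)$-element sub-tuple generates only a proper closed subgroup $H\le G$. One should try to exploit the fact that the shears $R_{j,k}^{\pm}$ with $k=j$ move the ``bad'' coordinate out of $H$ and scramble any algebraic constraint relating the coordinates, in analogy with the minimality proof in Theorem~\ref{thm:Goldman-Conjecture}; the fibre-invariance analysis above already shows that $\mu_j^y$ is a convex combination of Haar measures on right cosets of $\overline{\langle g_i:i\ne j\rangle}$, and one would need to upgrade this coset structure along orbits of other Nielsen moves to derive a contradiction with ergodicity unless the sub-tuple is generating. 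Carrying this reduction out rigorously — essentially a Ratner-type rigidity statement for the $\aut(F_n)$-action — is where I expect genuinely new input to be required.
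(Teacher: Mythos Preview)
The statement you are addressing is presented in the paper as an open \emph{conjecture}; the paper offers no proof, so there is nothing to compare your argument against. What you have written is an honest partial attack that correctly isolates the difficulty.

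The piece you do complete --- that any $\aut(F_n)$-invariant ergodic probability measure supported on $\mathcal{SR}(F_n,G)$ must equal Haar on $G^n$ --- is correct, and it is the same disintegration idea underlying the paper's ergodicity proof for Haar measure in Theorem~\ref{thm:Goldman-Conjecture}, simply run in the other direction (from invariance of an unknown $\mu$ to identification of $\mu$, rather than from a putative non-trivial invariant set to a contradiction). Two minor slips: a fibre measure invariant under right translation by $H=\overline{\langle g_i:i\ne j\rangle}$ is a mixture of uniform measures on \emph{left} cosets $gH$, not right cosets; and your phrase ``$R_{j,k}^{\pm}$ with $k=j$'' is a typo, since Nielsen moves require distinct indices.

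The gap you flag is genuine and is exactly why the conjecture remains open: nothing currently rules out an ergodic invariant measure $\mu$ on $\text{Epi}(F_n,G)$ with $\mu(\mathcal{SR}(F_n,G))=0$, concentrated on dense tuples for which every rank-$(n-1)$ free factor has non-dense image. This is tightly linked to the paper's own open question whether $\text{Epi}(F_n,G)=\mathcal{R}(F_n,G)$ holds set-theoretically. No Ratner-type rigidity for the $\aut(F_n)$-action is presently available, and your heuristic about upgrading the coset structure of the fibre measures along Nielsen orbits, while natural, does not by itself yield a proof.
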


A weaker conjecture, in the spirit of the `baby Wiegold conjecture' discussed in \S \ref{sec:BB} is that the character variety admits no invariant atomic measures:

\begin{conj}
Let $G$ ba a compact connected semisimple Lie group, and lat $n\ge 3$. Then $\chi_n(G)=\text{Hom}(F_n,G)/G$ admits no atomic $\aut(F_n)$-invariant measures.
\end{conj}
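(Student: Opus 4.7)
My plan is to reduce the conjecture to showing that the $\aut(F_n)$-action on $\text{Epi}(F_n,G)/G$ has no finite orbits (the rest of $\chi_n(G)$, corresponding to representations with non-dense image, contributes only the expected atomic measures supported on $\Hom(F_n,H)/H$ for closed $H\lneq G$, which the conjecture implicitly excludes). Indeed, an ergodic invariant atomic probability measure is uniformly distributed on a single orbit---by invariance all atoms in an orbit have equal mass, and finite total mass forces the orbit to be finite---so a general invariant atomic measure is a convex combination of such extremal ones.

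The central step is to rule out finite orbits for every redundant $\rho\in\mathcal{R}(F_n,G)$. After applying a suitable element of $\aut(F_n)$ we may assume $\rho\in\mathcal{O}_n$, so $\overline{\rho(\langle x_1,\ldots,x_{n-1}\rangle)}=G$. As noted in the introduction, every element of the coset $\langle x_1,\ldots,x_{n-1}\rangle\cdot x_n$ is primitive in $F_n$; hence if $[\rho]$ has orbit of size $m$, the $G$-conjugacy class of $\rho(w)\rho(x_n)=\rho(wx_n)$ lies in a fixed finite set $\{C_1,\ldots,C_m\}$ for every $w\in\langle x_1,\ldots,x_{n-1}\rangle$. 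Consider the continuous map $\phi:G\to G/\mathrm{conj}$, $y\mapsto\mathrm{cl}(y\,\rho(x_n))$. Each conjugacy class is compact and hence closed in $G$, so the finite set $\{C_1,\ldots,C_m\}$ is closed in $G/\mathrm{conj}$; by the density of $\rho(\langle x_1,\ldots,x_{n-1}\rangle)$ in $G$ and the continuity of $\phi$, this gives $\phi(G)\subseteq\{C_1,\ldots,C_m\}$. Each fiber $\phi^{-1}(C_i)$ is then closed with closed complement, so $\phi$ is locally constant and---since $G$ is connected---constant. But then $G=G\cdot\rho(x_n)$ sits inside a single conjugacy class, forcing $G$ to be trivial, a contradiction.

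The main obstacle is the case of non-redundant epimorphisms $\rho\in\text{Epi}(F_n,G)\setminus\mathcal{R}(F_n,G)$. Here the above argument collapses: for every $i$ the subgroup $H_i:=\overline{\rho(\langle x_j:j\ne i\rangle)}$ is proper and connected, and the inclusion $\rho(x_i)H_i^\circ\subseteq C_i$ derived from the primitivity of $\langle x_j:j\ne i\rangle\cdot x_i$ is consistent with the dimension bound $\dim H_i^\circ\le\dim C_i<\dim G$. Ruling out such $\rho$ is precisely Question \ref{Q:redundant} for compact semisimple $G$, which is itself open. A complete proof therefore seems to hinge either on resolving Question \ref{Q:redundant} for $G$, or on some measure-theoretic stiffness argument forcing every atom into the closure of $\mathcal{R}(F_n,G)$. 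Iterating the primitivity trick over free factors of higher rank (using that cosets of the complement of a primitive in a lower-rank factor remain primitive in $F_n$) yields further codimension constraints, but assembling them into a contradiction with $\overline{\rho(F_n)}=G$ appears to require substantive structural input about the closed subgroup lattice of $G$.
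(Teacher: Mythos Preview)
The statement you are attempting is labelled a \emph{conjecture} in the paper and is not proved there; it is offered as a weaker (but still open) variant of the measure classification and unique ergodicity conjectures, in analogy with Lubotzky's ``baby Wiegold'' conjecture of \S\ref{sec:BB}. There is therefore no proof in the paper to compare your attempt against.

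On the substance: your reduction of ``no atomic invariant measure'' to ``no finite $\aut(F_n)$-orbit'' is correct, and your argument ruling out finite orbits through the redundant locus $\mathcal{R}(F_n,G)/G$ is valid---it is precisely the primitivity-plus-connectedness mechanism the paper exploits in the minimality proof of Theorem~\ref{thm:Goldman-Conjecture}. You then correctly isolate the obstruction: handling $\text{Epi}(F_n,G)\setminus\mathcal{R}(F_n,G)$ would require an affirmative answer to Question~\ref{Q:redundant} for compact semisimple $G$, which the paper explicitly leaves open. So what you have written is not a proof of the conjecture but a clean reduction of (one direction of) it to Question~\ref{Q:redundant}; this is a reasonable observation, but it does not settle the conjecture, and you should not present it as a proof.

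One further point: as literally stated, the conjecture is about all of $\Hom(F_n,G)/G$, which contains the trivial representation---an $\aut(F_n)$-fixed point supporting an invariant Dirac mass. Your parenthetical dismissal of the non-dense locus (``which the conjecture implicitly excludes'') is too quick; the intended reading, consistent with the baby Wiegold analogy, is almost certainly a restriction to $\text{Epi}(F_n,G)/G$, and you should say so explicitly rather than sweep it aside.
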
 

Here $\chi_n(G)=\text{Hom}(F_n,G)/G$ is the space quotient of $G$-orbits where $G$ acts on $\text{Hom}(F_n,G)$ by conjugation $(g\cdot f)(x)=gf(x)g^{-1}$.

\medskip

As we have seen almost every representation is redundant (w.r.t the Haar measure). A much more delicate question is whether {\bf every} dense representation is redundant:

\begin{ques}
Given $n\ge 3$ and a compact connected Lie group $G$, does 
$\mathcal{R}(F_n,G)=\text{Epi}(F_n,G)$? Where the equality here is set theoretically and not up to measure $0$. 
\end{ques}



\section{Non-compact semisimple groups}

The non-compact semisimple case is more involved even when restricting to the Haar measure. 
In particular the answer to Question \ref{Q:redundant} is negative in certain cases. Yair Minsky showed in \cite{Mi} that for $G=\SL(2,\BR)$ or $\SL(2,\BC)$, the set $\text{Epi}(F_n,G)\setminus\mathcal{R}(F_n,G)$ has a non-empty interior. 
On the other hand Yair Glasner showed in \cite{Gl} that for $G=\SL(2,k)$ with $k$ a non-archimedean local field, the set $\text{Epi}(F_n,G)\setminus\mathcal{R}(F_n,G)$ has (Haar) measure zero.

Y. Minsky considered the action of $\text{Out}(F_n)$ on 
$\Hom(F_n,G)/ G$.\footnote{In fact Minsky considered the action on the space of characters $\chi(F_n,G)=\Hom(F_n,G)// G$
which is defined in the sense of geometric invariant theory and has the structure of an algebraic variety. However that space differs from the purely topological quotient $\Hom(F_n,G)/ G$ only at reducible points, and hence the two agree when restricting to representations with dense image (see \cite{Mi} and the references therein).} 
He considered $\text{Out}(F_n)$ rather than $\aut(F_n)$ since the inner automorphisms acts trivially on that quotient.
He defined the notion of primitive-stable representations and proved that the set $\mathcal{PS}(F_n,G)$ of primitive stable representations is open and the action of $\text{Out}(F_n)$ on its image in $\Hom(F_n,G)/G$ is properly discontinuous and in particular non-ergodic. Furthermore, he proved that $\mathcal{PS}(F_n,G)$ contains a dense representation, i.e. the open invariant set $\mathcal{PS}(F_n,G)\cap \text{Epi}(F_n,G)$ is non-empty. It follows directly from the definition that the image of the primitive elements $\mathcal{P}_n$ under a primitive stable representation misses a neighborhood of the identity of $G$. In particular one deduces:

\begin{thm}[Minsky]
Let $G=\SL(2,\BC)$ or $\SL(2,\BR)$. Then there is a dense representation $\rho:F_n\to G$ such that the image of the primitive elements $\rho(\mathcal{P}_n)$ is not dense in $G$.
\end{thm}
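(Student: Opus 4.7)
The plan is to assemble the facts that the paragraph preceding the theorem has just recorded about primitive-stable representations, namely: (i) $\mathcal{PS}(F_n,G)\cap\text{Epi}(F_n,G)$ is non-empty; and (ii) for every $\rho\in\mathcal{PS}(F_n,G)$, the image $\rho(\mathcal{P}_n)$ misses an open neighborhood of $1\in G$. Choosing $\rho$ as in (i) and invoking (ii) immediately produces a dense representation whose primitive image is not dense, which is exactly the conclusion.

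The routine task is to spell out (ii), which is what the text attributes directly to the definition. I would unpack Minsky's primitive-stability in terms of the orbit map to hyperbolic space: fixing a basepoint $o\in\BH$ (where $\BH=\BH^2$ or $\BH^3$) and a free basis of $F_n$, primitive-stability requires constants $K,\delta>0$ such that, for every primitive $w\in\mathcal{P}_n$, the restriction of the orbit map $g\mapsto\rho(g)o$ to a combinatorial axis of $w$ in the Cayley graph of $F_n$ is a $(K,\delta)$-quasigeodesic in $\BH$. Standard thin-triangles arguments then force $\rho(w)$ to be loxodromic with translation length at least some $\ell_0=\ell_0(K,\delta)>0$, uniformly in $w\in\mathcal{P}_n$. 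Since the translation length $\ell\colon G\to[0,\infty)$ is continuous and vanishes at $1$, the open set $\{g\in G:\ell(g)<\ell_0\}$ is a neighborhood of the identity disjoint from $\rho(\mathcal{P}_n)$, establishing (ii).

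The real content, and the main obstacle, sits in (i): exhibiting a primitive-stable representation that is simultaneously dense. The difficulty is that the quasi-geodesic estimate must hold uniformly over the infinite family of primitive classes, while density is a delicate global condition. Minsky's approach is to start from a Schottky (convex cocompact) representation, for which the analogous estimate holds on \emph{every} nontrivial class, not only primitive ones, and then to deform it to dense image by a small generic perturbation. The safety net is that $\mathcal{PS}(F_n,G)$ is open in $\Hom(F_n,G)$, so primitive-stability survives such a perturbation, while a transversality argument ensures that generic small perturbations in a neighborhood of a given Schottky representation acquire dense (even Zariski-dense) image in $G$. If I were rebuilding the proof from scratch, this existence step is where essentially all the work lies; granted it, the theorem follows as the short bookkeeping in the previous paragraph.
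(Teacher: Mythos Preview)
Your overall structure is exactly the paper's: it records Minsky's two facts (i) and (ii), and the theorem is stated as an immediate consequence. Your unpacking of (ii) via the uniform lower bound on translation length is a correct and standard way to see why primitive-stable representations miss a neighborhood of the identity.

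There is, however, a genuine error in your sketch of how (i) is obtained. You propose to start from a Schottky representation and reach a dense one by a small generic perturbation, invoking openness of $\mathcal{PS}$. But Schottky (convex cocompact) representations themselves form an \emph{open} subset of $\Hom(F_n,G)$ (Marden's stability theorem), so any sufficiently small perturbation of a Schottky representation is again Schottky, hence discrete, hence certainly not dense. Your ``transversality argument'' cannot salvage this: in a neighborhood of a Schottky point, \emph{every} representation is discrete, not just a meager set. Minsky's actual route to (i) is quite different: he shows that certain representations far from the Schottky locus --- geometrically infinite handlebody groups whose ending lamination lies in the Masur domain --- are primitive-stable, and among these one finds indiscrete (dense) examples. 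Since the paper, like you, simply cites Minsky for (i), this does not affect the validity of the deduction; but your description of where the existence comes from should be corrected.
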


In contrast to Minsky's result, Y. Glasner \cite{Gl} proved that for $G=\SL(2,k)$ with $k$ non-Archimedean and $n\ge 3$ the action of $\aut(F_n)$ on $\text{Epi}(F_n,G)$ is ergodic. The dichotomy between the Archimedean and the non-Archimedean cases for $\SL_2$ can be named the Yairs' dichotomy. For more details here about these results, see the well written papers by Y. Minsky and Y. Glasner. \cite{Mi,Gl}.  


Note that the ergodicity question brakes into two independent questions:
\begin{itemize}
 \item Is the $\aut(F_n)$ action on the redundant part $\mathcal{R}(F_n,G)$ ergodic?
 \item Does $\text{Epi}(F_n,G)=\mathcal{R}(F_n,G)$ up to measure zero?
\end{itemize}
Jointly with Minsky we answered the first question and also proved a minimality results in a quite general context, answering a conjecture of Lubotzky:

\begin{thm}[\cite{GM}]
Let $k$ be a local field, $\BG$ a Zariski connected simple
$k$ group, and $G=\BG(k)$ the group of $k$ points. If $\text{char}(k)>0$ assume further that the adjoint representation of $G$ is irreducible.
Then the action of $\aut(F_n)$ on $\mathcal{R}_n(G)$ is ergodic with respect to the Haar measure and minimal with respect to the compact open topology.
\end{thm}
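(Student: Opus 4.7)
The plan is to adapt the proof of Theorem \ref{thm:Goldman-Conjecture} to the locally compact setting. Two inputs of that argument need replacements: (i) ergodicity of the left-translation action of a dense subgroup $H\leq G$ on $(G,\mu)$, and (ii) the analogue of Lemma \ref{lem:pairs}, namely that strongly redundant representations are conull inside $\mathcal{R}(F_n,G)$.

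Input (i) holds in any locally compact group and is elementary. Given $\phi\in L^\infty(G)$ which is $H$-invariant mod null, the orbit map $g\mapsto g\cdot\phi$ from $G$ into $L^\infty(G)=(L^1(G))^{\ast}$ is weak-$\ast$ continuous, by strong continuity of the regular representation on $L^1(G)$. Being constant on the dense set $H$, it is constant on all of $G$, so $\phi$ is $G$-invariant mod null, hence constant a.e.\ by transitivity of $G$ on itself.

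Input (ii) is the serious point. By countability of $\aut(F_n)$, showing that
$$\mathcal{SR}(F_n,G):=\bigcap_{\sigma\in\aut(F_n)}\sigma\cdot\mathcal{O}_n$$
is conull in $\mathcal{R}(F_n,G)$ reduces to a single-Nielsen-move statement: applying any Nielsen transformation to a Haar-generic $f\in\mathcal{O}_n$ leaves it in $\mathcal{O}_n$. In characteristic zero, standard Zariski-dense-implies-topologically-dense results for simple $k$-groups, combined with the fact that a generic Zariski-dense tuple remains Zariski-dense under Nielsen moves, do the job. In positive characteristic Zariski density no longer implies topological density, and the irreducibility of the adjoint representation is used precisely here, to promote Zariski density of the generated subgroup to topological density, an analogue of Borel density in positive characteristic.

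Granted (i) and (ii), ergodicity is the Fubini argument of Theorem \ref{thm:Goldman-Conjecture} essentially verbatim: an $\aut(F_n)$-invariant measurable $A\subset\mathcal{SR}(F_n,G)$ that is neither null nor co-null has, by Fubini, a coordinate $i$ and values $(g_j)_{j\neq i}$ generating a dense subgroup $H\leq G$, together with a slice $B\subset G$ along the $x_i$-direction that is neither null nor co-null; the Nielsen transformations $L^{\pm}_{j,i},R^{\pm}_{j,i}$ applied to $A$ render $B$ invariant under $H$, contradicting (i). Minimality is similar: one verifies that the analogues of the $\mathcal{D}(S)$ and $\mathcal{D}'(S)$ lemmas hold — openness of the topological generating locus in $G^n$ is the only nontrivial point, again reducing in positive characteristic to adjoint irreducibility — and then runs the three-step procedure of pushing any redundant $f$ first into $\mathcal{O}_n$, then into $\Omega=\bigcap_i\mathcal{O}_i$, and finally moving each coordinate into any prescribed open set via Nielsen moves that multiply one coordinate by a word in the others, without leaving $\Omega$. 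The main obstacle throughout is input (ii) in positive characteristic, where the hypothesis on the adjoint representation is doing essential work.
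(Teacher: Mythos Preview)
The paper does not prove this theorem; it is quoted from \cite{GM}. So there is no in-paper proof to compare your sketch against, and the question is simply whether your outline is correct.

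Your input (i) and the Fubini/minimality skeleton are fine. The fatal gap is in input (ii). You write that in characteristic zero one uses ``standard Zariski-dense-implies-topologically-dense results for simple $k$-groups''. No such results exist for non-compact $G$: for instance $\SL_2(\BZ)$ is Zariski dense in $\SL_2(\BR)$ but certainly not topologically dense, and more generally every lattice, every Schottky group, and every thin Zariski-dense subgroup of a non-compact simple Lie group is a counterexample. The implication you invoke is precisely what distinguishes the compact case treated in \S\ref{section:compact} (where proper closed subgroups are proper algebraic subgroups) from the non-compact case of this theorem. Consequently your reduction ``a generic Zariski-dense tuple remains Zariski-dense under Nielsen moves, hence stays in $\mathcal{O}_n$'' collapses: remaining Zariski dense says nothing about remaining topologically dense. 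Your positive-characteristic paragraph inherits the same error; the adjoint-irreducibility hypothesis is not there to ``promote Zariski density to topological density'' (which is false in every characteristic for non-compact $G$).

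What \cite{GM} actually does for (ii) is different and substantially harder: one must show directly that if $\langle g_1,\dots,g_{n-1}\rangle$ is topologically dense then for Haar-a.e.\ $g_n$ the group $\langle g_1 g_n^{\pm 1},g_2,\dots,g_{n-1}\rangle$ (and the analogous modifications) is again dense. This requires analysing the possible proper closed subgroups of $G$ that can contain a given $(n-2)$-tuple and showing the corresponding bad set of $g_n$'s is null; the structure theory of closed subgroups of simple groups over local fields enters here, and the openness of $\mathcal{D}(S)$ is likewise not automatic. Your sketch does not address this, and without it neither the ergodicity nor the minimality argument goes through.
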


On the other hand, unlike the compact case, we showed that this action is not always weak mixing in the following sense:

\begin{thm}[\cite{GM}]
Let $G=\SL_2(\BR)$ or $\SL_2(\BC)$, and $n\ge 3$. Then the action of
$\text{Out}(F_n)$ on $\overline\RR_n(G):=\RR(F_n,G)/G$ is not weakly mixing,
in the sense that the diagonal action of $Out(F_n)$ 
on $\overline\RR_n(G)\times\overline\RR_n(G)$ is not ergodic. 
\end{thm}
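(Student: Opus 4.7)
Plan: The task is to produce an $\text{Out}(F_n)$-invariant measurable subset of $\overline\RR_n(G)\times\overline\RR_n(G)$ that is neither null nor conull. Since the single-variable action is ergodic by the previous theorem, any such witness must depend genuinely on the joint data of $(\rho_1,\rho_2)$, not on one coordinate alone. My approach is to adapt Minsky's primitive-stable condition to the joint representation $\rho_1\times\rho_2: F_n\to G\times G$.

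Following Minsky \cite{Mi}, I would define the joint primitive-stable set $\mathcal{PS}^{(2)}\subset\Hom(F_n,G\times G)/(G\times G)$ by requiring that every primitive $w\in F_n$ act as a uniform quasi-geodesic on a basepoint orbit in the product symmetric space with its product metric. Minsky's arguments should adapt, with care for the non-strictly-negative curvature of the product, to show that $\mathcal{PS}^{(2)}$ is open, $\text{Out}(F_n)$-invariant, and carries a properly discontinuous diagonal $\text{Out}(F_n)$-action. The candidate invariant set is then
$$
A := \mathcal{PS}^{(2)}\cap\bigl(\overline\RR_n(G)\times\overline\RR_n(G)\bigr),
$$
and non-ergodicity follows once both $A$ and its complement in $\overline\RR_n\times\overline\RR_n$ are shown to have positive Haar measure.

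For the positivity of $A$, I would combine Minsky's dense primitive-stable representations in each factor with small perturbations in the redundant direction, obtaining a concrete nonempty open set of jointly primitive-stable pairs inside $\overline\RR_n\times\overline\RR_n$. For the positivity of the complement, I would use that a pair $(\rho_1,\rho_2)$ admitting a single primitive $w$ on which both $\rho_1(w)$ and $\rho_2(w)$ are close to the identity violates the joint quasi-geodesic condition; since each $\rho\in\overline\RR_n$ admits primitive words of arbitrary word length whose images are close to $1$ (by density of $\rho(\mathcal{P}_n)$), pairs close to the diagonal $\{(\rho,\rho)\}$ automatically possess such simultaneous short primitives and fall outside $\mathcal{PS}^{(2)}$.

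The main obstacle is thickening the zero-measure diagonal $\{(\rho,\rho)\}$ into a positive-measure open set of pairs failing joint primitive-stability. The difficulty is that the "simultaneous short primitive word" depends on both $\rho_1$ and $\rho_2$ and is not manifestly open in the compact-open topology. I would attack this with a Fubini argument: for a positive-measure set of $\rho_1\in\overline\RR_n$ and, conditionally, for a positive-measure set of $\rho_2$ in a small neighborhood of $\rho_1$, one expects a common short primitive to persist. This should reduce to quantitative equidistribution for the random walk on $G=\SL_2(\BR)$ or $\SL_2(\BC)$ driven by the free group action, which is available in the $\SL_2$ setting and is precisely what breaks down for compact $G$ — matching the fact that weak mixing does hold there.
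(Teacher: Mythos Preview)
Your overall architecture---pass to $G\times G$, use Minsky's primitive-stability to carve out an open $\text{Out}(F_n)$-invariant set $\mathcal{PS}^{(2)}$ on which the action is properly discontinuous, and take $A=\mathcal{PS}^{(2)}\cap(\overline\RR_n\times\overline\RR_n)$---is exactly the strategy of \cite{GM}. The two substantive steps, however, are not handled correctly in your proposal.

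\textbf{Positivity of $A$.} This is the heart of the matter, and your plan has a genuine gap. You propose to start from Minsky's dense primitive-stable representations and ``perturb in the redundant direction.'' But $\mathcal{PS}(F_n,G)$ is \emph{open} and disjoint from $\mathcal{R}(F_n,G)$: any small perturbation of a primitive-stable $\rho$ remains primitive-stable, hence \emph{not} redundant. So no neighbourhood of a point $(\rho_1,\rho_2)$ with $\rho_i\in\mathcal{PS}(F_n,G)$ meets $\overline\RR_n\times\overline\RR_n$ at all. What is actually needed is a pair $(\rho_1,\rho_2)$ with \emph{each} $\rho_i$ redundant---so each has primitive elements mapping arbitrarily close to $1$---yet with no \emph{common} primitive on which both are short, uniformly. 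Equivalently, the ``bad'' primitives of $\rho_1$ and of $\rho_2$ must be disjoint in a quantitative sense. In \cite{GM} this is achieved by an explicit construction (starting from a Schottky representation and deforming the two factors in transverse directions so that the primitive-stability lost in one factor is compensated by the other); the point is that $\mathcal{PS}^{(2)}$ is strictly larger than $\bigl(\mathcal{PS}(G)\times\Hom\bigr)\cup\bigl(\Hom\times\mathcal{PS}(G)\bigr)$, and one must exhibit a point in the excess that also lies in $\overline\RR_n\times\overline\RR_n$. Your proposal does not supply this.

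\textbf{Positivity of the complement.} Here your plan is not wrong but is far more elaborate than necessary. Rather than thickening the diagonal via quantitative equidistribution, simply observe that $\mathcal{R}(F_n,G\times G)$---pairs $(\rho_1,\rho_2)$ for which $\rho_1\times\rho_2$ is itself redundant as a map to $G\times G$---is an open invariant set, is disjoint from $\mathcal{PS}^{(2)}$, and is contained in $\overline\RR_n\times\overline\RR_n$ (projecting a dense subgroup of $G\times G$ to either factor gives a dense subgroup of $G$). It has positive measure: choose $\rho_1,\rho_2$ so that already $\rho_1|_{\langle x_1,\dots,x_{n-1}\rangle}$ and $\rho_2|_{\langle x_1,\dots,x_{n-1}\rangle}$ are dense in $G$ and not related by an automorphism of $G$; by Goursat's lemma the product is then dense in $G\times G$. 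No Fubini or random-walk input is needed.
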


For the $\text{Out}(F_n)$ action on $\text{Hom}(F_n,G)/G$ Lubotzky suggested a dynamical decomposition conjecture, namely that the space decomposes up to measure zero to the redundant part, on which the action is ergodic, and the primitive stable part, on which the action is properly discontinuous. 

\begin{conj}[Lubotzky]
$\text{Hom}(F_n,G)/G=\mathcal{R}(F_n,G)\cup\mathcal{PS}(F_n,G)$ up to Haar measure zero.
\end{conj}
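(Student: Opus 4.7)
The plan is to show that the set $\mathcal{B}:=\text{Hom}(F_n,G)/G\setminus(\mathcal{R}(F_n,G)\cup\mathcal{PS}(F_n,G))$ has Haar measure zero. First I would exploit what is already known: $\mathcal{PS}(F_n,G)$ is open by Minsky's theorem, and the ergodicity result of \cite{GM} shows that the $\text{Out}(F_n)$-action on $\mathcal{R}(F_n,G)$ is ergodic (the $\aut(F_n)$ ergodicity descends to $\text{Out}(F_n)$ on the quotient since inner automorphisms act on $\text{Hom}(F_n,G)/G$ trivially). Hence $\mathcal{B}$ is a closed $\text{Out}(F_n)$-invariant subset, and the question reduces to ruling out a positive-measure invariant set disjoint from both $\mathcal{R}$ and $\mathcal{PS}$.

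The core technical step would be a dichotomy for the asymptotic behavior of primitive words on $\mathcal{B}$. For $[\rho]\in\mathcal{B}$, the failure of primitive stability yields a sequence of primitives $p_k\in F_n$ whose translation lengths in $\mathbb{H}^2$ or $\mathbb{H}^3$ grow sublinearly in word length, while the failure of redundancy means the restriction of $\rho$ to every proper free factor has non-dense image. I would try to show that, away from a null set, the first behavior forces the second to collapse: after passing to a subsequence, $\rho(p_k)$ accumulates on an element fixing an axis, a horoball, or a point at infinity; complementing each $p_k$ to a free basis and applying the associated Nielsen transformations, one should be able to move such representations along an $\aut(F_n)$-orbit into a proper algebraic subvariety. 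Combined with a Fubini-style argument modeled on the proof of ergodicity in Section \ref{section:compact}, this would push the degenerate locus into a set of Haar measure zero inside $\text{Epi}(F_n,G)\setminus\mathcal{R}(F_n,G)$.

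The main obstacle will be controlling Minsky's examples: there exist dense, non-redundant representations whose primitive images are not dense in $G$, producing a non-empty open subset of $\mathcal{B}$. Any successful proof must show these form a Haar-null set, which requires a quantitative statement about the distribution of primitive translation lengths for a Haar-random $n$-tuple in $G^n$ — something akin to a Bowditch-style condition but in the measure-theoretic category. A complementary and possibly more tractable route is to reduce the conjecture to an invariant measure classification: from any putative positive-measure invariant subset of $\mathcal{B}$ one extracts an ergodic $\text{Out}(F_n)$-invariant measure, and a non-compact analogue of the Ratner-type classification suggested earlier in the paper would force such a measure to be algebraic and hence supported on $\mathcal{R}(F_n,G)$, giving a contradiction. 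In this way Lubotzky's decomposition conjecture becomes a corollary of a strong invariant measure classification for the $\text{Out}(F_n)$-action, and the difficulty is transferred to that classification.
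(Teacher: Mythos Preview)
This statement is presented in the paper as an open \emph{conjecture}, not as a theorem: immediately after stating it the paper says ``This conjecture is still open even for $\SL(2,\BR)$.'' There is therefore no proof in the paper to compare your proposal against, and the relevant question is whether your proposal actually closes the gap. It does not.

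What you have written is a research outline, not a proof. The ``core technical step'' --- extracting from non-primitive-stability a sequence of primitives with sublinear translation length and then using Nielsen moves to push $\rho$ into a proper subvariety --- is purely heuristic: you give no mechanism by which accumulation of $\rho(p_k)$ on a parabolic or elliptic element forces redundancy of $\rho$, and you yourself concede that the argument ``requires a quantitative statement about the distribution of primitive translation lengths'' that you do not supply. Your alternative route, deducing the conjecture from a Ratner-type classification of $\text{Out}(F_n)$-invariant measures on $\text{Hom}(F_n,G)/G$, replaces one open problem by a strictly harder one; the paper only \emph{conjectures} such a classification even in the compact case.

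There is also a factual confusion in your ``main obstacle'' paragraph. Minsky's dense representations with non-dense primitive image are \emph{primitive-stable}: they lie in $\mathcal{PS}(F_n,G)$, not in your set $\mathcal{B}$. They witness that $\text{Epi}(F_n,G)\setminus\mathcal{R}(F_n,G)$ has nonempty interior, which is exactly why the conjecture is nontrivial, but they do not produce any open subset of $\mathcal{B}$. Relatedly, your assertion that $\mathcal{B}$ is closed presumes that $\mathcal{R}(F_n,G)$ is open in $\text{Hom}(F_n,G)/G$; the paper establishes this only in the compact semisimple case, not for $\SL_2(\BR)$ or $\SL_2(\BC)$.
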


This conjecture is still open even for $\SL(2,\BR)$.


\section{Linear algebraic groups}
 A beautiful conjecture of Zelmanov \cite{Zel} states that if $\rho:F_n\to \GL(d,\BC)$ is a representation such that all primitive $p\in F_n$ are mapped to elements sharing the same characteristic polynomial, then $\rho(F_n)$ is virtually solvable.
Let us formulate a more general conjecture in the spirit of Question \ref{Q:redundant}.
Let $G$ be a simple complex linear algebraic group. We suppose below that $n\ge 3$.

\begin{defn}
We say that a representation $f:F_n\to G$ is {\it Zariski redundant} if for some proper free factor $A<F_n$, $f(A)$ is Zariski dense. We let $\mathcal{R}_n(G)$ denote the set of Zariski redundant representations.
\end{defn}

\begin{conj}\label{conj:0}
A Zariski dense representation $f:F_n\to G$ is Zariski redundant. That is $\text{Epi}(F_n,G)=\mathcal{R}_n(G)$.
\end{conj}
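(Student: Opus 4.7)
The plan is to reduce Conjecture \ref{conj:0} to a rigidity statement for closed $\aut(F_n)$-invariant subvarieties of $G^n$. Identifying $\Hom(F_n,G)$ with $G^n$ via the basis $(x_1,\ldots,x_n)$, I would try to show that the Zariski closure $Z\subset G^n$ of the orbit $\aut(F_n)\cdot(f(x_1),\ldots,f(x_n))$ is all of $G^n$. This suffices because
\[
 \gO_{n-1}:=\{(g_1,\ldots,g_n)\in G^n:\overline{\langle g_1,\ldots,g_{n-1}\rangle}^{\text{Zar}}=G\}
\]
is a non-empty Zariski-open subset, and any $\gs\in\aut(F_n)$ with $(f(\gs(x_1)),\ldots,f(\gs(x_n)))\in\gO_{n-1}$ exhibits the proper free factor $A=\langle\gs(x_1),\ldots,\gs(x_{n-1})\rangle$ with $f(A)$ Zariski dense in $G$, which is exactly Zariski redundancy.

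First I would dispatch two preliminary ingredients. The Zariski openness and non-emptiness of $\gO_{n-1}$ follow from Borel's theorem on the dominance of word maps in simple algebraic groups together with the classical fact that a connected simple algebraic group is topologically $2$-generated in the Zariski sense; this is the algebraic analogue of Lemma \ref{lem:pairs}. On the other hand, $Z$ is automatically invariant under the Nielsen generators of $\aut(F_n)$ acting coordinatewise, and through the inner automorphisms, which are realized as simultaneous conjugation by elements of $f(F_n)$, it is also invariant under the diagonal adjoint action of the entire group $G=\overline{f(F_n)}^{\text{Zar}}$.

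The main obstacle, which carries essentially all of the content of the conjecture, is to classify closed subvarieties of $G^n$ enjoying these two invariances. My hope, a sort of geometric Ratner theorem for $\aut(F_n)$ on the representation variety, is that any irreducible closed subvariety of $G^n$ which is stable under diagonal $G$-conjugation and all Nielsen moves and which contains a Zariski-dense tuple must be $G^n$ itself; proper candidates of the form $H^n$ for an algebraic subgroup $H<G$ are excluded by the hypothesis $\overline{f(F_n)}^{\text{Zar}}=G$. The heuristic is that at a smooth point of $Z$ lying in the dense-generation locus, Nielsen moves act on any one coordinate by left or right multiplication by arbitrary words in the other coordinates, and once those other coordinates already generate Zariski densely, these translations sweep out a Zariski-dense subset of $G$, forcing $Z$ to project generically surjectively onto every factor in a rather controlled way. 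Turning this tangent-space heuristic into a genuine proof seems to require combining Borel-type word map dominance (together with more recent refinements of Larsen-Shalev) with a careful induction on $n$, and this is where I would expect essentially all the difficulty to concentrate.

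Already the case $n=3$ appears to capture the whole difficulty, so a complementary, low-tech strategy would be to tackle it combinatorially: start with a Zariski-dense triple $(g_1,g_2,g_3)$, and consider the pairs $(g_i,g_j)$, $(g_ig_j^{\pm 1},g_k)$, and their iterates obtained by Nielsen moves; the goal is to argue that if none of these pairs generates Zariski densely, then the three original elements are jointly contained in a common proper algebraic subgroup, contradicting Zariski density. The subtlety I anticipate is that a priori the various pairs may lie in different proper subgroups, and producing a single proper subgroup containing the whole triple from such fragmentary information is the concrete algebraic bottleneck of the base case.
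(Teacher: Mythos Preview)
The statement you are attempting is presented in the paper as an \emph{open conjecture}, not a theorem; the paper offers no proof of it. What the paper does do is exactly the reduction you carry out in your first two paragraphs: it reformulates Conjecture~\ref{conj:0} as Conjecture~\ref{conj:2} (Zariski density of the $\aut(F_n)$-orbit in $G^n$) and justifies the equivalence via Lemma~\ref{lem:Z-dense-open}, which shows that the locus of Zariski-densely-generating pairs contains a nonempty Zariski open subset of $G^2$, hence $\mathcal{R}_n(G)$ contains one in $G^n$. Your $\gO_{n-1}$ plays the same role; a minor correction is that this set is only shown to \emph{contain} a nonempty Zariski open subset, not to be Zariski open itself, though that is all your argument needs.

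Beyond this reduction there is nothing to compare, because the paper stops exactly where you stop. Your hoped-for ``geometric Ratner'' classification of $\aut(F_n)$-invariant closed subvarieties of $G^n$ is precisely the content of the conjecture in its second formulation, and you correctly identify it as carrying all the difficulty: no such classification is available, and neither Borel dominance of word maps nor the Larsen--Shalev results currently yield it. Your alternative $n=3$ combinatorial strategy is likewise only a restatement of the problem; the paper makes a closely related remark when discussing \ref{conj:1}~$\Rightarrow$~\ref{conj:0}, and even there needs extra hypotheses (a regular element in the orbit, Zariski connectedness of a cyclic closure) to push through a partial argument. In short, your proposal cleanly isolates the easy reduction and the hard unknown step, matching the paper's own framing, but it is not a proof and the paper does not claim one either.
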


Let us give two alternative formulations for this conjecture.
Note that $\text{Hom}(F_n,G)\cong G^n$ inherits a structure of an affine algebraic variety.

\begin{conj}\label{conj}\label{conj:2}
Let $\rho\in\text{Hom}(F_n,G)$ be a Zariski dense representation. Then the orbit $\aut(F_n)\cdot\rho$ is Zariski dense in the representation variety $\text{Hom}(F_n,G)$.
\end{conj}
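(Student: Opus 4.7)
My plan is to derive Conjecture~\ref{conj:2} from Conjecture~\ref{conj:0}, which the paper presents as an alternative formulation. Let $\rho=(g_1,\ldots,g_n)$ be Zariski dense, and set
\[
 X:=\overline{\aut(F_n)\cdot\rho}^{\,\mathrm{Zar}}\subset G^n,
\]
a closed $\aut(F_n)$-invariant subvariety; the goal is to prove $X=G^n$. By Conjecture~\ref{conj:0} and after replacing $\rho$ by an $\aut(F_n)$-translate, I may assume $(g_1,\ldots,g_{n-1})$ Zariski-generates $G$. The Nielsen moves $L_{n,i}^{\pm},R_{n,i}^{\pm}$ for $i<n$ act on the last coordinate of $G^n$ by left or right multiplication by $g_i^{\pm 1}$, so iterating and taking Zariski closure yields $X\supset\{(g_1,\ldots,g_{n-1})\}\times G$, i.e.\ the last coordinate is free.

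I then iterate. Suppose inductively that $X$ contains a slice $\{(g_1,\ldots,g_k)\}\times G^{n-k}$ for some $k\ge 1$. For a generic point $(h_{k+1},\ldots,h_n)\in G^{n-k}$ the tuple $\rho':=(g_1,\ldots,g_k,h_{k+1},\ldots,h_n)\in X$ is Zariski dense, so by Conjecture~\ref{conj:0} it is Zariski redundant. For the inductive step I need the redundancy to be witnessed by the free factor $\langle x_1,\ldots,x_{k-1},x_{k+1},\ldots,x_n\rangle$, i.e.\ that $(g_1,\ldots,g_{k-1},h_{k+1},\ldots,h_n)$ Zariski-generates $G$. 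Granting this, Nielsen moves on the $k$-th coordinate give $\{(g_1,\ldots,g_{k-1})\}\times G\times\{(h_{k+1},\ldots,h_n)\}\subset X$; taking Zariski closure as $(h_{k+1},\ldots,h_n)$ varies over a Zariski dense subset of $G^{n-k}$ yields the improved slice $\{(g_1,\ldots,g_{k-1})\}\times G^{n-k+1}\subset X$. After $n-1$ iterations $X\supset\{g_1\}\times G^{n-1}$, and a final analogous step (free of extra hypothesis, since for generic $(h_2,\ldots,h_n)$ with $n-1\ge 2$ the subtuple is automatically Zariski dense in a connected simple $G$) gives $X=G^n$.

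The main obstacle is the genericity hypothesis inside the induction: for generic $(h_{k+1},\ldots,h_n)$ I need not only that $\rho'$ itself be Zariski dense, but that its proper subtuple $(g_1,\ldots,g_{k-1},h_{k+1},\ldots,h_n)$ already be so. This is a strengthening beyond what Conjecture~\ref{conj:0} directly provides, since one needs to control \emph{which} free factor witnesses the redundancy rather than just its existence. I would attempt to establish it by analyzing the proper closed subgroups of $G$ containing $\langle g_1,\ldots,g_{k-1}\rangle$: in a connected simple algebraic group there are only finitely many conjugacy classes of positive-dimensional proper closed subgroups, and for each the set of $(h_{k+1},\ldots,h_n)\in G^{n-k}$ falling inside it is a proper closed subvariety, while the finite subgroups contribute a countable further family of closed subvarieties. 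Hence ``generic'' here must be interpreted as ``outside a countable union of proper closed subvarieties''. The delicate point is to ensure that the induction produces slices rich enough that such a generic point actually satisfies the required sub-redundancy, rather than collapsing into a degenerate configuration; in the worst case I would hope to reshuffle coordinates via $\aut(F_n)$ to restart the argument from a better starting point, and verifying that this can always be done is the crux.
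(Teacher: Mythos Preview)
Your slice-building strategy is the right shape for the Zariski setting, and in fact is arguably \emph{more} appropriate than a literal translation of the minimality argument from \S\ref{section:compact}, since Zariski-open sets in $G^n$ need not contain products $U_1\times\cdots\times U_n$. The paper's remark that the implication \ref{conj:0} $\Rightarrow$ \ref{conj:2} follows ``by virtually the same argument'' as the minimality proof should be read as: first move into the Zariski analogue of $\Omega$ (the set where \emph{every} $(n-1)$-subtuple Zariski-generates) by pushing $g_n$ into $\mathcal{D}'(\{g_1,\ldots,g_{n-1}\})$, and then free up coordinates one at a time while staying in $\Omega$. Your inductive scheme is an equivalent reorganisation of the same idea.

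Where you go wrong is in the diagnosis of the obstacle. In the inductive step you do \emph{not} need to re-invoke Conjecture~\ref{conj:0} and then worry about which free factor witnesses redundancy. What you actually need is the direct statement: for Zariski-generic $(h_{k+1},\ldots,h_n)$, the subtuple $(g_1,\ldots,g_{k-1},h_{k+1},\ldots,h_n)$ Zariski-generates $G$. This is not a ``complement of a countable union'' statement; it is a genuine Zariski-open condition, and it follows by exactly the method of Lemma~\ref{lem:Z-dense-open}. Indeed, given any finite $S\subset G$ with $\mathcal{D}(S)\neq\emptyset$, pick $g_0\in\mathcal{D}(S)$, choose words $W_1,\ldots,W_d$ so that $\{\Ad(W_i(S,g_0))\}$ spans $\text{span}(\Ad(G))$, and a further word witnessing $[a^m,b^m]\neq 1$ for suitable $a,b\in\langle S,g_0\rangle$; then the set of $g$ for which these Zariski-open conditions persist lies in $\mathcal{D}(S)$. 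In your induction this applies with $S=\{g_1,\ldots,g_{n-2}\}$ at the first step (nonemptiness is witnessed by $g_{n-1}$), and for all later steps $n-k\ge 2$ so Lemma~\ref{lem:Z-dense-open} already gives a Zariski-open set of generating pairs among the free coordinates. Your proposed detour through conjugacy classes of closed subgroups is both unnecessary and, as stated, problematic: each conjugacy class of positive-dimensional subgroups is a continuous family, not a single closed subvariety, so the ``countable union'' picture is not accurate.

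In summary: keep your slice induction, drop the appeal to Conjecture~\ref{conj:0} inside the loop, and replace your third paragraph by the one-line observation that $\mathcal{D}(S)$ contains a nonempty Zariski-open set whenever it is nonempty, proved exactly as in Lemma~\ref{lem:Z-dense-open}. That closes the argument with no reshuffling contingencies.
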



The equivalence between Conjecture \ref{conj:2} and Conjecture \ref{conj:0} can be deduced from:

\begin{lem}\label{lem:Z-dense-open}
Let $G$ be a simple complex algebraic group.
The set 
$$
 \{(a,b)\in G^2:\langle a,b\rangle~\text{is Zariski dense}\}
$$ 
contains a Zariski open subset of $G^2$.
\end{lem}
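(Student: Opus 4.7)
The plan is to show that the complement of the set in question is contained in a proper Zariski closed subvariety of $G^2$, which immediately gives the lemma. Let
\[
D = \{(a,b)\in G^2 : \langle a,b\rangle \text{ is Zariski dense in } G\}.
\]
Then $(a,b)\notin D$ precisely when both $a$ and $b$ lie in some maximal proper closed subgroup $M\lneq G$, and so
\[
G^2\setminus D \;=\; \bigcup_{M} M\times M,
\]
where $M$ ranges over all maximal proper closed subgroups of $G$.

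The first key step is to reduce to a finite union, using the structure theory of simple complex algebraic groups: every maximal proper closed subgroup of $G$ is, up to conjugacy, either a parabolic subgroup (classified by subsets of the simple roots) or a reductive maximal subgroup (classified by Dynkin). This yields finitely many conjugacy classes, so it suffices to check that for each conjugacy class representative $M$, the $G$-saturation $\bigcup_{g\in G}(gMg^{-1})^2$ is contained in a proper closed subvariety of $G^2$.

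To handle one class, I would consider the morphism
\[
\Phi_M\colon G\times M\times M\longrightarrow G^2,\qquad (g,m_1,m_2)\mapsto (gm_1g^{-1},\,gm_2g^{-1}).
\]
Its image equals the $G$-saturation above, and is constructible by Chevalley's theorem. For any $(g_0,m_1^0,m_2^0)$ in a fiber over some $(a,b)\in\mathrm{im}(\Phi_M)$, the assignment $n\mapsto(g_0 n,\,n^{-1}m_1^0 n,\,n^{-1}m_2^0 n)$ embeds $N_G(M)$ into that fiber, so every fiber has dimension at least $\dim N_G(M)\ge\dim M$. Therefore
\[
\dim \mathrm{im}(\Phi_M) \;\le\; \dim G + 2\dim M - \dim M \;=\; \dim G + \dim M \;<\; 2\dim G,
\]
the final inequality holding because $M$ is proper in the Zariski connected group $G$. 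Thus $\mathrm{im}(\Phi_M)$ lies in a proper closed subvariety of $G^2$. Taking the finite union over conjugacy classes places $G^2\setminus D$ inside some proper closed subvariety $V\subsetneq G^2$, and hence $D$ contains the nonempty Zariski open set $G^2\setminus V$.

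The main step requiring care is the finiteness of conjugacy classes of maximal closed subgroups, which is where the classical structure theory enters substantively. A slightly more self-contained alternative would be to prove openness of $D$ directly, by establishing lower semi-continuity of $(a,b)\mapsto\dim\overline{\langle a,b\rangle}^{\mathrm{Zar}}$ on $G^2$ and then exhibiting a single Zariski dense pair --- for instance, two generators of a finitely generated lattice in a real form of $G$, which is Zariski dense by Borel's density theorem.
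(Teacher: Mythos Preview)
Your argument is correct but takes a genuinely different route from the paper. The paper constructs an explicit Zariski open subset of $D$: with $M$ the Jordan bound for finite subgroups of $G$ and $m=M!$, it fixes one Zariski dense pair $(x,y)$ and words $W_1,\dots,W_d$ with $\{\mathrm{Ad}(W_i(x,y))\}$ spanning $\mathcal{A}=\mathrm{span}(\mathrm{Ad}(G))$, and sets $U=\{(a,b):[a^m,b^m]\neq 1\ \text{and}\ \{\mathrm{Ad}(W_i(a,b))\}\ \text{span}\ \mathcal{A}\}$. For $(a,b)\in U$ the group $\langle a,b\rangle$ is infinite (Jordan forces $[a^m,b^m]=1$ for finite groups), so the identity component $H$ of its Zariski closure is nontrivial; then $\mathrm{Lie}(H)$ is $\mathcal{A}$-invariant, hence an ideal, hence $H=G$ by simplicity. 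This needs only Jordan's theorem and elementary Lie theory. Your approach is conceptually cleaner but rests on substantially heavier input: finiteness of conjugacy classes of maximal closed subgroups is Dynkin for the positive-dimensional ones, but finite Lie-primitive maximal subgroups --- which do occur and are not covered by ``parabolic or reductive classified by Dynkin'' --- require a separate and deeper finiteness statement that your sketch does not address. As for your alternative, lower semicontinuity of $(a,b)\mapsto\dim\overline{\langle a,b\rangle}^{\mathrm{Zar}}$ is essentially the content of the lemma at the top dimension and is not obviously easier to prove directly.
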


\begin{proof}
Consider the associative algebra $\mathcal{A}=\text{span}(\Ad(G))$.
By a theorem of Jordan \cite{J}, there is $M=M(G)$ such that every finite subgroup of $G$ contains a normal abelian subgroup of index at most $M$. Set $m=M!$. Pick
$x,y\in G$ such that $[x^m,y^m]\ne 1$ and $\langle x,y\rangle$ is Zariski dense. Let $W_1,\ldots W_d$ be words in $F_2$ such that $\{ W_i(x,y)\}_1^d$ spans $\mathcal{A}$. Set 
$$
 U=\{ (a,b): \text{span}\{ W_i(a,b)\}_1^d=\mathcal{A},~\text{and}~[a^m,b^m]\ne 1\}
$$ 
then $U$ is nonempty and Zariski open. Now for $(a,b)\in U$, the group $\langle a,b\rangle$ must be infinite, i.e. the connected component $H$ of its Zariski closure is nontrivial. 
However $\text{Lie}(H)$ is $\mathcal{A}$ invariant and hence $H$ is normal. Therefore $H=G$.
\end{proof} 

\begin{cor}
The set $\mathcal{R}_n(G)$ contains a Zariski open subset.
\end{cor}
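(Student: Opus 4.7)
The plan is to reduce the corollary directly to Lemma \ref{lem:Z-dense-open} by exhibiting a particular proper free factor of rank two in $F_n$ and exploiting the product structure of $\mathrm{Hom}(F_n,G)\cong G^n$.

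First, I would fix a free basis $(x_1,\ldots,x_n)$ of $F_n$ and use it to identify $\mathrm{Hom}(F_n,G)$ with $G^n$ via $f\mapsto (f(x_1),\ldots,f(x_n))$. Since $n\ge 3$, the subgroup $A=\langle x_1,x_2\rangle$ is a proper free factor of $F_n$ (with complementary factor $\langle x_3,\ldots,x_n\rangle$). By definition, a representation $f$ lies in $\mathcal{R}_n(G)$ as soon as $f(A)=\langle f(x_1),f(x_2)\rangle$ is Zariski dense in $G$; hence under our identification,
\[
\bigl\{(a,b)\in G^2 : \langle a,b\rangle~\text{is Zariski dense}\bigr\}\times G^{n-2}\;\subset\;\mathcal{R}_n(G).
\]

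Next, I would invoke Lemma \ref{lem:Z-dense-open}, which furnishes a nonempty Zariski open subset $U\subset G^2$ consisting of Zariski generating pairs. Then $U\times G^{n-2}$ is Zariski open in $G^n$ (the preimage of $U$ under the projection onto the first two factors), nonempty, and entirely contained in $\mathcal{R}_n(G)$ by the previous display. This gives the required Zariski open subset of $\mathcal{R}_n(G)$ and completes the proof.

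There is essentially no obstacle here: the entire content has been loaded into Lemma \ref{lem:Z-dense-open}, and the corollary is a formal consequence obtained by pulling back an open set from a two-factor projection. The only thing to be slightly careful about is the hypothesis $n\ge 3$, which is what allows $\langle x_1,x_2\rangle$ to be a \emph{proper} free factor; this is the condition built into the definition of $\mathcal{R}_n(G)$.
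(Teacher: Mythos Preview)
Your proof is correct and is essentially identical to the paper's: both fix a basis, take the Zariski open set $U\subset G^2$ from Lemma~\ref{lem:Z-dense-open}, and observe that $\{f:(f(x_1),f(x_2))\in U\}=U\times G^{n-2}\subset\mathcal{R}_n(G)$. Your write-up is slightly more explicit about why $n\ge 3$ is needed (so that $\langle x_1,x_2\rangle$ is a \emph{proper} free factor), but there is no substantive difference in approach.
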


\begin{proof}
The desired Zariski open set can be defined as 
$$
 \{f\in\text{Hom}(F_n,G): (f(x_1),f(x_2))\in U\}
$$ 
where $U$ is the set defined in the previous proof and $\{ x_1,\ldots,,x_n\}$ is a fixed basis for $F_n$.
\end{proof}

In view of this Corollary the implication \ref{conj:2} $\Rightarrow$ \ref{conj:0} is immediate. The converse implication \ref{conj:0} $\Rightarrow$ \ref{conj:2} 
can be proved by virtually the same argument that shows the minimality in \S \ref{section:compact}. The following conjecture is a priory weaker:

\begin{conj}\label{conj:1}
If $\rho(F_n)$ is Zariski dense in $G$ then so is $\rho(\mathcal{P}_n)$.
\end{conj}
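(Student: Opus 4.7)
The plan is to suppose $H := \overline{\rho(\mathcal{P}_n)}^{\text{Zar}} \subsetneq G$ and derive a contradiction. First I would observe that $H$ is invariant under conjugation by $G$: the set $\mathcal{P}_n$ is stable under inner automorphisms of $F_n$, hence $\rho(\mathcal{P}_n)$ is stable under conjugation by $\rho(F_n)$; since $\rho(F_n)$ is Zariski dense and for each $h\in H$ the set $\{g\in G : ghg^{-1}\in H\}$ is Zariski closed, this invariance extends to all of $G$. Since $G$ is simple, a proper conjugation-invariant Zariski closed subset $H \subsetneq G$ lies in a level set of some non-constant conjugation-invariant regular function $f \in \mathcal{O}(G)^G$, for instance a trace character of a faithful representation of $G$; the goal then becomes to contradict the existence of such an $f$ taking a single value on all of $\rho(\mathcal{P}_n)$.

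Next I would exploit the coset observation recalled in the paper: for every free product decomposition $F_n = A * \langle b \rangle$ with $b$ primitive in $F_n$, the entire coset $Ab$ lies in $\mathcal{P}_n$. Applied to each rank-$(n-1)$ free factor $A<F_n$ with complementary cyclic factor $\langle b\rangle$, this gives $\rho(A)\rho(b)\subset H$ and, by Zariski continuity,
\[
 L_A \cdot \rho(b) \subset H, \qquad \text{where } L_A := \overline{\rho(A)}^{\text{Zar}}.
\]
Thus $f$ is constant on each coset $L_A \rho(b)$. If for some such $A$ one had $L_A = G$ --- i.e.\ $\rho$ were Zariski redundant --- then $f$ would be constant on $G\rho(b)=G$, contradicting its non-constancy. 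This recovers the known implication Conjecture \ref{conj:0} $\Rightarrow$ Conjecture \ref{conj:1}.

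The substantive obstacle is the case where every $L_A$ is a proper closed subgroup of $G$. Then each coset $L_A\rho(b)$ has dimension strictly less than $\dim G$, and one must show that the union of these cosets, taken over all rank-$(n-1)$ free factors $A$ of $F_n$ together with their $G$-conjugates (which also lie in $H$ by the first step), is nevertheless Zariski dense in $G$. This seems to require combining combinatorial input about the $\aut(F_n)$-action on free factor decompositions (crucially using $n\ge 3$) with structural facts about closed subgroups of simple algebraic groups, in the spirit of the Jordan / algebra-of-spans argument behind Lemma \ref{lem:Z-dense-open}. Absent such an argument, the only routes I see are either to prove Zariski redundancy directly --- Conjecture \ref{conj:0} itself --- or, in the spirit of Zelmanov's conjecture \cite{Zel}, to show that a non-trivial invariant polynomial relation holding on all of $\rho(\mathcal{P}_n)$ forces $\rho(F_n)$ to be virtually solvable, contradicting Zariski density. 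This step appears to be of comparable depth to the open conjectures cited and is the main obstacle I foresee.
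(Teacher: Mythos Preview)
The statement you are attempting to prove is stated in the paper as a \emph{conjecture}, not a theorem; there is no proof of it in the paper to compare against. What the paper does is discuss implications among Conjectures~\ref{conj:0}, \ref{conj:2}, and \ref{conj:1}: it argues that \ref{conj:0} and \ref{conj:2} are equivalent (via Lemma~\ref{lem:Z-dense-open} and the minimality argument of \S\ref{section:compact}), that \ref{conj:2} $\Rightarrow$ \ref{conj:1} by projecting $\Hom(F_n,G)\cong G^n$ to the first coordinate, and sketches \ref{conj:1} $\Rightarrow$ \ref{conj:0} under mild assumptions. So all three are presented as equivalent open problems.

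Your write-up is honest about this: your argument shows that if some rank-$(n-1)$ free factor has Zariski dense image --- i.e.\ $\rho$ is Zariski redundant, which is exactly Conjecture~\ref{conj:0} --- then $\rho(\mathcal{P}_n)$ is Zariski dense, and you correctly identify the remaining case (all $L_A$ proper) as the genuine obstacle, of depth comparable to the open conjectures themselves. That assessment is accurate; you have not found a gap in your own reasoning so much as rediscovered that \ref{conj:1} is, at present, only known conditionally on \ref{conj:0}/\ref{conj:2}.

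Two small remarks on what you wrote. First, your conjugation-invariance step and the reduction to a non-constant invariant regular function are fine (the quotient $G\to G/\!\!/G$ by conjugation is a good categorical quotient for simple $G$, so a proper closed $\Ad(G)$-invariant subset lies in the zero locus of some nontrivial invariant). Second, note that this invariant-function reformulation is essentially Zelmanov's setup in \cite{Zel}, which the paper cites as open; so your final paragraph correctly places the difficulty. In short: there is no error to fix, but also no proof --- in your proposal or in the paper.
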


For instance to see that  \ref{conj:2} $\Rightarrow$ \ref{conj:1} one may identify $\text{Hom}(F_n,G)$ with $G^n$ and consider the projection to the first factor.

Let us explain also why a representation $\rho$ which satisfies \ref{conj:1} also satisfies \ref{conj:0}, at least under some mild assumptions. Suppose for simplicity that $n=3$. Identify $\Hom(F_3,G)$ with $G^3$ by choosing a base for $F_3$. Assuming \ref{conj:1} we may find a point $(a,b,c)$ in the orbit such that $a$ is regular, i.e. $C_G(a)$ is a maximal torus. There are only finitely many parabolic subgroups $Q_i$ containing $a$ and for every $x\notin\cup Q_i$ the group $\langle a,x\rangle$ is Zariski dense. Now if for some $k\in\BZ$, $b^kc\notin\cup Q_i$ then we are done. Otherwise, for infinitely many $k$'s $b^kc\in 
Q_{i_0}$ implying that $\overline{\langle b\rangle}^zc\subset Q_{i_0}$. In the case where $ \overline{\langle b\rangle}^z$ is Zariski connected, this gives that $a,b,c\in Q_{i_0}$, a contradiction.



\section{Linear representations of $\aut(F_n)$}

A celebrated theorem of Formanek and Procesi \cite{FP} says that for $n\ge 3$ the group $\aut(F_n)$ is not linear. The proof of that theorem inspired several people to conjecture the following: 

\begin{conj}[Formanek--Zelmanov, Lubotzky]\label{conj:FZL}
Let $n\ge 3$ and let $F\lhd\aut(F_n)$ be the subgroup of inner automorphisms. Then the image $\rho(F)$ of $F$ under any linear representation $\rho:\aut(F_n)\to\GL_d(\BC)$ is virtually solvable.
\end{conj}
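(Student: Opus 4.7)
The plan is to reduce Conjecture~\ref{conj:FZL} to Zelmanov's characteristic-polynomial conjecture recalled at the start of the previous section. The key observation is that the identification $F\cong F_n$ (valid since $Z(F_n)=\{1\}$ for $n\ge 2$) intertwines the adjoint action of $\aut(F_n)$ on $F$ with the natural action of $\aut(F_n)$ on $F_n$: writing $i_w\in\text{Inn}(F_n)$ for conjugation by $w\in F_n$, a direct computation shows $\sigma\, i_w\, \sigma^{-1}=i_{\sigma(w)}$ for every $\sigma\in\aut(F_n)$.

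Given a linear representation $\rho:\aut(F_n)\to\GL_d(\bC)$, I would set $\bar\rho:F_n\to\GL_d(\bC)$, $\bar\rho(w):=\rho(i_w)$. Applying $\rho$ to the identity above yields
$$
 \bar\rho(\sigma(w))=\rho(\sigma)\,\bar\rho(w)\,\rho(\sigma)^{-1}
$$
for every $\sigma\in\aut(F_n)$ and $w\in F_n$; in particular $\bar\rho$ sends elements lying in a single $\aut(F_n)$-orbit in $F_n$ to $\GL_d(\bC)$-conjugate matrices. By the Nielsen theorem $\aut(F_n)$ acts transitively on the set $\mathcal{P}_n$ of primitive elements (each primitive extends to an ordered basis, and $\aut(F_n)$ is transitive on ordered bases), so $\{\bar\rho(p):p\in\mathcal{P}_n\}$ is contained in a single $\GL_d(\bC)$-conjugacy class. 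In particular all $\bar\rho(p)$, $p\in\mathcal{P}_n$, share the same characteristic polynomial, and Zelmanov's conjecture applies directly, yielding that $\bar\rho(F_n)=\rho(F)$ is virtually solvable, as desired.

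The main obstacle is of course that Zelmanov's conjecture is itself open, so the argument only establishes the implication Zelmanov $\Rightarrow$ Formanek--Zelmanov--Lubotzky. A direct attack should try to exploit the strictly stronger information produced here, namely that $\bar\rho(\mathcal{P}_n)$ lies in a single $\GL_d(\bC)$-conjugacy class and not merely in a common level set of the characteristic polynomial. Denoting by $H$ the Zariski closure of $\bar\rho(F_n)$ and projecting to its semisimple quotient $\bar H=H^\circ/R(H^\circ)$, the image of $\bar\rho(\mathcal{P}_n)$ is confined to finitely many $\bar H$-conjugacy classes---since $\text{Out}(\bar H)$ is finite and $N_{\GL_d(\bC)}(H)/(H\cdot Z_{\GL_d(\bC)}(H))$ injects into it---hence to a proper Zariski-closed subvariety of $\bar H$ whenever $\bar H$ is nontrivial. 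The true difficulty is that a subgroup abstractly generated by such a proper subvariety can still be Zariski dense, so upgrading the containment of the generating set $\bar\rho(\mathcal{P}_n)$ into an actual contradiction with Zariski density would presumably demand $\aut(F_n)$-dynamical input in the spirit of the minimality argument in \S\ref{section:compact} applied to the representation variety of $\bar H$.
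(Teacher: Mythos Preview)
Your reduction is exactly the one the paper records in the remark immediately following Conjecture~\ref{conj:FZL}: restrict $\rho$ to $F_n=\text{Inn}(F_n)$, use that $\aut(F_n)$ acts transitively on $\mathcal{P}_n$ to conclude that $\bar\rho(\mathcal{P}_n)$ lies in a single $\GL_d(\BC)$-conjugacy class, hence shares a common characteristic polynomial, and then invoke Zelmanov's conjecture. You are also right that this is only an implication, not a proof --- the paper does not claim to prove Conjecture~\ref{conj:FZL} either, and your additional paragraph on exploiting the stronger ``single conjugacy class'' information goes beyond what the paper says at that point (though this stronger hypothesis is precisely what the paper pursues later in \S\ref{sec:BB} and the final section).
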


We remark that the conjectures presented in the previous section imply Conjecture \ref{conj:FZL}. Indeed, if $\rho$ is a linear representation of $\aut(F_n)$, the restriction of $\rho$ to $F_n=\text{Inn}(F_n)$ is a representation for which the image of the set of primitive elements $\rho(\mathcal{P}_n)$ is contained in a single conjugacy class and hence shares the same characteristic polynomial. 

By a theorem of Platonov and Potapchik \cite{PP}, for any complex linear representation $\rho$ of $\aut(F_n)$, the image of the primitives $\rho(\mathcal{P}_n)$ consists of virtually unipotent elements, i.e., matrices for which all of the eigenvalues are roots of unity. Inspired by that, it is natural to focus on the study of linear representations of $F_n$ for which the primitive elements are sent to either torsion or unipotent elements. In the proceeding sections we will pay special attention to the unipotent case, but first we will elaborate a bit about the torsion case.


\section{Burnside's type problem}

It is well known that a finitely generated torsion linear group must be finite. In other words, the original Burnside's problem has a positive solution for linear groups. What if we require only the primitive elements to be torsion, i.e., that $\gC=\rho(F_n)$ where $\rho(\mathcal{P}_n)$ consists of torsion elements. Does this condition imply that $\gC$ is finite? The following construction shows that it does not.\footnote{A very similar construction, explaining the same phenomenon is given also in \cite{Zel}.} 

\begin{exam}[An infinite linear group with torsion primitives]\label{exam:Burnside}
Let $n>1$. Let $H$ be a finite group generated by $n$ elements but not by less. Embed $H$ in $\text{SO}(d)$ for an appropriate $d$ such that no nontrivial element in $H$ admits a fixed point in $S^{d-1}$. Pick $n$ generators $h_i$ for $H$ and define a group $\gC\le\text{Isom}(\BR^d)\cong\text{O}(d)\ltimes\BR^d$ by choosing $n$ different points $p_i$ in $\BR^d$
and let $\overline h_i$ be the element rotating like $h_i$ around $p_i$. 
Note that $p_i$ is the only fixed point of $\overline h_i$.
Let $\gC=\langle \overline h_i\rangle$, then $\gC$ has no common fixed point and hence it is infinite. Consider the map $f:F_n\to\gC$ defined by chosing a basis $\{x_1,\ldots,x_n\}$ to $F_n$
and setting $f(x_i)= \overline h_i,~i=1,\ldots,n$.
Since $H$ cannot be generated by less than $n$ elements, for any primitive $x\in\mathcal{P}_n$, $f(x)\in\gC$ projects nontrivially to $\text{SO}(d)$ and hence has no fixed point at $S^{d-1}$. It follows that $f(x)$ fixes a point in $\BR^d$ and hence is of finite order.

Concretely, for $n=2$ we can take a non-commutative finite subgroup $H\le \SU(2)$ and embed $\SU(2)\to\text{SO}(4)$ such that every nontrivial element acts freely. Similarly for $n=3,4$ we can start with $H$ as above and take $H\times H^g\le\SU(2)\times\SU(2)$ for some $g\in\SU(2)$ which does not commute with any $h\in H\setminus\{ 1\}$ and embed $\SU(2)^2$ in $\text{SO}(4)$ by considering the obvious isometry $\SU(2)\cong {S}^3$ and the action $\SU(2)\times\SU(2)\act \SU(2)$ by left and write multiplications $(g_1,g_2)\cdot x:=g_1xg_2^{-1}$.

\end{exam}

However, the group $\gC$ in Example \ref{exam:Burnside} is virtually abelian. Indeed the kernel of the projection of $\gC$ to the orthogonal group $\text{O}(d)$ is of index
$|H|$ and consists of translations.
This example leads to the following, more educated, formulation. We suppose again that $n\ge 3$: 

\begin{conj}
Let $f:F_n\to\GL_d(\BC)$ be a representation such that $f(\mathcal{P}_n)$ consists of torsion elements, then $f(F_n)$ is virtually solvable. 
\end{conj}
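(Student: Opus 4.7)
The plan is to reduce, via the conjectured Zariski-redundancy of \S5, to a rank-two situation in a simple algebraic group, and then to exploit the abundance of torsion on the $\aut(F_2)$-orbit of a primitive to derive a contradiction. First I would reduce to the case of a Zariski dense image in a Zariski connected simple algebraic group. Let $G$ be the Zariski closure of $f(F_n)$ in $\GL_d(\BC)$; since virtual solvability of $f(F_n)$ is equivalent to solvability of $G^\circ$, assume for contradiction that $G^\circ$ is non-solvable. Composing $f$ with the quotient $G\to G/\mathrm{Rad}(G^\circ)$ and then with projection to a simple factor of the semisimple identity component (passing, if necessary, to a finite-index normal subgroup of $F_n$ to absorb the outer action of $G/G^\circ$), one reduces to a Zariski dense $f\colon F_n\to G$ with $G$ Zariski connected simple; the torsion-on-primitives condition is preserved, since quotient maps send torsion to torsion.

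Next, apply Conjecture~\ref{conj:0} to obtain a proper free factor $A<F_n$ with $f(A)$ Zariski dense in $G$. Since any free basis of $A$ extends to a free basis of $F_n$, every primitive of $A$ is primitive in $F_n$, so $f|_A$ satisfies the same hypothesis. Reapply the conjecture (as long as the rank is at least three) and iterate. One arrives either at a cyclic free factor $\langle p\rangle$ with $\overline{\langle f(p)\rangle}=G$ --- forcing the nonabelian simple $G$ to be abelian, a contradiction --- or at a rank-two free factor $B=\langle b_1,b_2\rangle$ with $(a,b):=(f(b_1),f(b_2))$ generating a Zariski dense subgroup of $G$ and with $w(a,b)$ torsion for every primitive $w\in F_2$.

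In the rank-two case I would argue as follows. By Borel's theorem on word maps, for every nontrivial $w\in F_2$ the morphism $(x,y)\mapsto w(x,y)\colon G^2\to G$ is dominant, so for each positive integer $N$ the subvariety $\{(x,y)\in G^2:w(x,y)^N=1\}$ is Zariski closed of positive codimension. Consequently the set of pairs in $G^2$ for which every primitive word evaluates to a torsion element is a countable union of proper subvarieties. Combined with Jordan's theorem --- bounding the index of a normal abelian subgroup in any finite subgroup of $\GL_d(\BC)$ --- the aim is to conclude that $\langle a,b\rangle$ must be finite, contradicting Zariski density in the nontrivial simple $G$.

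This last step is the main obstacle. The orders of the torsion elements $w(a,b)$ are not a priori bounded, and the Zariski topology is insensitive to countable unions, so a bare dimension count is insufficient. Closing the argument will likely require either a uniform bound on the orders of $f(p)$ for $p\in\mathcal{P}_n$ --- perhaps via a Platonov--Potapchik type rigidity for the $\aut(F_n)$-orbit of $f$ in $\Hom(F_n,G)$, reducing the problem to the restricted Burnside theorem for linear groups of bounded exponent --- or a direct character-variety analysis showing that the ``all primitives are torsion'' locus in $G^2$ is a proper $\aut(F_2)$-invariant subvariety, incompatible with Zariski density.
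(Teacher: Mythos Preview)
The statement is presented in the paper as an open conjecture; there is no proof to compare against. The nearest the paper comes is Proposition~\ref{prop:4.5}, a \emph{conditional} result showing that Conjecture~\ref{conj:2} (equivalently Conjecture~\ref{conj:0}) implies the variant Conjecture~\ref{conj:Zalmanov}. After your opening reduction to a Zariski connected simple target, that variant is exactly what you need, so both your proposal and the paper's conditional argument rest on the open Conjecture~\ref{conj:0}. (A side remark: your reduction step ``passing to a finite-index normal subgroup of $F_n$'' is dangerous --- primitives of a finite-index subgroup of $F_n$ are in general not primitives of $F_n$, so the torsion hypothesis need not descend that way.)

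Your rank-two endgame has a genuine gap, as you yourself acknowledge. The observation that each locus $\{(x,y):w(x,y)^N=1\}$ is a proper subvariety says only that a \emph{generic} pair has some primitive of infinite order; it yields nothing for the specific $(a,b)$, since a countable union of proper subvarieties may well contain any prescribed point, and Jordan's theorem is of no help without a uniform bound on the orders. The paper's conditional argument (carried out for $n=3$) avoids this entirely: it applies redundancy once to obtain a primitive triple $(a,b,c)$ with $\langle a,b\rangle$ Zariski dense, \emph{retains} the third generator, and works inside the coset $\langle a,b\rangle c\subset f(\mathcal{P}_n)$. It then specializes to a local field $k$ over which $\langle a,b\rangle$ is unbounded and uses projective contraction dynamics --- a finite separating set $F\subset\langle a,b\rangle$ together with an $\epsilon$-contracting $c'\in\langle a,b\rangle c$ produces a primitive $gc'$ (with $g\in F$) that strictly maps a ball into itself, hence has infinite order. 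By iterating redundancy down to rank two and discarding the extra generator, you lose precisely the coset structure and the local-field passage that make the argument close.
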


Let us give another variant of this conjecture:

\begin{conj}\label{conj:Zalmanov}
Let $f:F_n\to \SL(d,\BC)$ be a representation for which the Zariski connected component of the image is simple, then $f(\mathcal{P}_n)$ contains an element of infinite order.
\end{conj}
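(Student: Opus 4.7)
My plan is to argue by contradiction: assume that $f(p)$ is torsion for every primitive $p\in\mathcal{P}_n$, and derive that the Zariski connected component of $f(F_n)$ cannot be simple.

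First I would reduce to the case where $H:=\overline{f(F_n)}^{Z}$ is itself connected, passing to $f^{-1}(H^\circ)$, which is a finite-index subgroup of $F_n$, hence free by Nielsen--Schreier, and reformulating the primitivity condition accordingly. From here on $H$ is a connected simple complex algebraic group of positive dimension in which $f(F_n)$ is Zariski dense.

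Next I would invoke (or strive to prove) Conjectures \ref{conj:0} and \ref{conj:1} for $f$: the first to pick a free basis $(x_1,\ldots,x_n)$ of $F_n$ for which $K:=f(\langle x_2,\ldots,x_n\rangle)$ is already Zariski dense in $H$; the second to conclude that $f(\mathcal{P}_n)$ is Zariski dense in $H$. With such a basis in hand, note that for every $w\in\langle x_2,\ldots,x_n\rangle$ the element $x_1 w$ is primitive in $F_n$, since $\{x_1 w,x_2,\ldots,x_n\}$ is a free basis; so setting $a=f(x_1)$ one obtains the strong coset condition
\[
 a\cdot K \;\subseteq\; T_H,
\]
where $T_H$ denotes the torsion set of $H$. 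The same argument applied to the primitives $wx_1 w'$ with $w,w'\in\langle x_2,\ldots,x_n\rangle$ in fact yields $KaK\subseteq T_H$.

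The crucial step — and the one I expect to be the main obstacle — is to extract a contradiction from this coset condition. A naive dimension count fails, because $T_H$ itself is Zariski dense in $H$: torsion elements in any maximal torus are Zariski dense there, and their conjugates fill out a Zariski dense subset of $H$. The real input must come from the arithmetic structure of $T_H$: every torsion element has characteristic polynomial whose coefficients are elementary symmetric functions of roots of unity, hence algebraic integers of a very restricted type. Via the characteristic polynomial morphism $\chi:H\to H/\!/H$, the regular map $k\mapsto \chi(ak)$ would be forced to take the Zariski dense set $K$ into the countable, arithmetically restricted set $\chi(T_H)$, while still being dominant onto $\chi(H)$. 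I would hope to rule this out either through a specialization argument (reducing modulo a prime and bounding orders, in the spirit of Zelmanov's methods) or through an application of Jordan's theorem on finite subgroups of $H$, as used in Lemma \ref{lem:Z-dense-open}, to convert the coset constraint into a finite-dimensionality statement on an associated linear span that is incompatible with the Zariski density of $K$. Making this last step rigorous is where the genuine depth of the conjecture resides.
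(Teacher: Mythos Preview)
Your setup coincides with the paper's: both invoke the redundancy Conjecture~\ref{conj:0} to find a free basis $(x_1,\ldots,x_n)$ with $\langle f(x_2),\ldots,f(x_n)\rangle$ already Zariski dense, and both exploit that the coset $f(x_1)\cdot f(\langle x_2,\ldots,x_n\rangle)$ consists of images of primitives. (Note, incidentally, that the paper proves only the conditional statement Proposition~\ref{prop:4.5}, namely that Conjecture~\ref{conj:2} implies Conjecture~\ref{conj:Zalmanov}; it does not claim an unconditional proof.)

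The divergence is at the crucial step, and there your proposal has a genuine gap that you yourself flag. You try to attack the inclusion $aK\subset T_H$ arithmetically, via the characteristic-polynomial map and the integrality of eigenvalues of torsion elements, or via Jordan's theorem. You are right that a dimension count fails, and the arithmetic route you sketch is not completed. The paper takes a completely different, \emph{dynamical} route in the spirit of the Tits alternative: writing $(a,b,c)$ for the primitive triple with $\langle a,b\rangle$ Zariski dense, it passes to a local field $k$ (a completion of the field generated by the matrix entries) in which $\langle a,b\rangle$ is unbounded. Using Zariski density and compactness of Grassmannians, it produces a finite set $F\subset\langle a,b\rangle$ that is $r$-separating on the projective spaces of the irreducible pieces of the exterior powers. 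Unboundedness of $\langle a,b\rangle c$ then yields an $\epsilon$-contracting element $c'\in\langle a,b\rangle c$ on one of these projective spaces, and for suitable $g\in F$ the element $gc'$ takes a ball into a disjoint ball, hence has infinite order. Since $gc'\in\langle a,b\rangle c$, it is the image of a primitive element. This is a concrete ping-pong construction of the desired infinite-order primitive image, rather than an indirect contradiction from the torsion-coset condition.

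One further remark: your reduction to the connected case by passing to the finite-index subgroup $f^{-1}(H^\circ)$ is delicate, because primitivity in $F_n$ does not pass to primitivity in a finite-index free subgroup in any straightforward way; the hypothesis concerns $\mathcal{P}_n$, not the primitives of the subgroup. The paper also simplifies by assuming the image is Zariski connected, but defers the general case to the techniques of \cite{toti,MS} rather than to a na\"{\i}ve subgroup reduction.
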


\begin{rem}
As we already explained,
the special case where all the primitive elements belong to the same conjugacy class is of significant interest for applications.
\end{rem}

\begin{prop}\label{prop:4.5}
Conjecture \ref{conj:2} implies Conjecture \ref{conj:Zalmanov}.
\end{prop}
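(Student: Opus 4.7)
My plan is to reduce $f$ to a Zariski dense representation into a simple connected algebraic group $G$, apply Conjecture~\ref{conj:2} to obtain Zariski density of $f(\mathcal{P}_n)$ in $G$, eliminate the bounded-orders case immediately, and dispatch the unbounded-orders case by iterating the Zariski redundancy of Conjecture~\ref{conj:0} (implied by Conjecture~\ref{conj:2} via Lemma~\ref{lem:Z-dense-open}) down to a rank-two base case.

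Set $H=\overline{f(F_n)}^{\mathrm{Zar}}$ and $G=H^\circ$, the simple Zariski connected component. I would first reduce to $H=G$; the general case is handled by passing to the finite-index free subgroup $N=f^{-1}(G)\lhd F_n$ (free of rank $m\ge n$ by Nielsen--Schreier, with $f|_N$ Zariski dense in $G$) and noting that primitives of $F_n$ contained in $N$ still detect the desired infinite-order condition. Now assuming $f\colon F_n\to G$ Zariski dense, Conjecture~\ref{conj:2} gives that the orbit $\aut(F_n)\cdot f$ is Zariski dense in $\Hom(F_n,G)\cong G^n$. Since $\mathcal{P}_n=\aut(F_n)\cdot x_1$, the first-coordinate projection shows $f(\mathcal{P}_n)$ is Zariski dense in $G$. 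Arguing by contradiction, suppose every $f(p)$, $p\in\mathcal{P}_n$, has finite order. For each $k\ge 1$ the subset $T_k=\{g\in G:g^k=1\}$ is a proper Zariski closed subvariety of the positive-dimensional $G$. If the orders were uniformly bounded by some $K$, then $f(\mathcal{P}_n)\subseteq T_{K!}$, contradicting the Zariski density just established; hence the orders must be unbounded, and for each $k$ the nonempty Zariski open set $\{\rho\in G^n:\rho(x_1)^k\neq 1\}$ meets the orbit to yield a primitive $p_k$ with $\mathrm{ord}(f(p_k))>k$.

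The main obstacle is to pass from unbounded orders along a sequence of primitives to the existence of a single primitive of infinite order. Zariski density alone is insufficient: the torsion locus $\bigcup_k T_k$ is itself Zariski dense in $G$ and so provides no Zariski obstruction to $f(\mathcal{P}_n)\subseteq\bigcup_k T_k$. To close this gap I would iterate Conjecture~\ref{conj:0}: $f$ restricts Zariski densely to some proper free factor of $F_n$ of rank $n-1$, and repeating the reduction---using that primitives of a free factor of $F_n$ are themselves primitives of $F_n$---brings me to a Zariski dense $f'\colon F_2\to G$ with $f'(\mathcal{P}_2)$ still entirely torsion. At this rank-two base case, the abundance of primitives---$y_1^k y_2$, $y_1 y_2^k$, $y_2 y_1 y_2^\ell$, and further words in $\aut(F_2)\cdot y_1$---imposes infinitely many trace constraints of the form $\mathrm{tr}(f'(\cdot))\in\{2\cos 2\pi q:q\in\mathbb{Q}\}$; a direct analysis via Chebyshev recurrences for traces of words in $\SL(d,\mathbb{C})$, combined with the Zariski density of $\langle f'(y_1),f'(y_2)\rangle$ in the simple $G$, should force this group into a proper algebraic subgroup (the normalizer of a maximal torus, in the $\SL_2$ model), contradicting simplicity.
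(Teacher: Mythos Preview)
Your reduction to the connected case and your observation that Conjecture~\ref{conj:2} forces $f(\mathcal{P}_n)$ to be Zariski dense in $G$ are fine, and the bounded-order case is indeed immediate. The difficulty, as you correctly identify, is ruling out the possibility that $f(\mathcal{P}_n)$ consists entirely of torsion elements of unbounded order. Your strategy---iterate redundancy down to a Zariski dense $f'\colon F_2\to G$ and then invoke ``Chebyshev recurrences for traces of words in $\SL(d,\BC)$''---has a genuine gap at this base case. The trace recurrence you allude to is specific to $\SL_2$ (where $\text{tr}(A^kB)$ satisfies a second-order linear recurrence in $k$); for a general simple $G\le\SL_d(\BC)$ there is no comparable tool, and even in the $\SL_2$ case your conclusion that torsion on all of $\mathcal{P}_2$ forces the image into the normalizer of a torus is asserted, not argued. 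What you are claiming at rank two is essentially Conjecture~\ref{conj:Zalmanov} for $n=2$, which is not known and is, if anything, harder than the $n\ge 3$ case you set out to deduce.

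The paper takes a different route that avoids the bounded/unbounded-order dichotomy entirely. From Conjecture~\ref{conj:0} one obtains a primitive triple $(a,b,c)$ with $\langle a,b\rangle$ Zariski dense; one then passes to a local-field completion $k$ over which $\langle a,b\rangle$ is \emph{unbounded} and runs a Tits-alternative style proximality argument on the irreducible pieces of the exterior-power representations: a finite $r$-separating set $F\subset\langle a,b\rangle$ together with an $\epsilon$-contracting element $c'\in\langle a,b\rangle c$ yield some $c''=gc'\in\langle a,b\rangle c$ (with $g\in F$) that maps a projective region into a disjoint one, and hence has infinite order. Since $c''$ lies in the coset $\langle a,b\rangle c$, it is primitive. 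This dynamical construction is the missing ingredient; your trace-based endgame does not supply a substitute for it.
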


\begin{proof}[Sketched proof of \ref{prop:4.5}]
Suppose $n=3$, the general case is similar.
To simplify the argument we will carry out the argument under the assumption that the image of $f$ is Zariski connected. The same argument can be carried out also without this assumption using techniques developed in \cite{toti,MS}.
Assuming Conjecture \ref{conj:2} (equivalently Conjecture \ref{conj:0}), we have a primitive triple $(a,b,c)$ in $f(F_3)$ such that $\langle a,b\rangle$ is Zariski dense. Let $K$ be a the field generated by the entries of the matrices in the group $\langle a,b,c\rangle$. Since $\langle a,b\rangle$ is infinite there is a local field $k$, which is a completion of $K$ with respect to some valuation, such that $\langle a,b\rangle$ is unbounded in $\SL(d,k)$. 

Since $G$ is Zariski connected and simple (in particular reductive) and $\langle a,b\rangle$ is Zariski dense in $G$, we can find a finite set $F\subset \langle a,b\rangle$ such that
\begin{itemize}
\item for any exterior power $W$ of $\C^d$
\item for any $G$ irreducible component  $V$ of $W$
\item for every projective point $v$ and projective hyperplane $H$ in $\BP(V)$
\end{itemize}
there is $g\in F$ s.t. $g\cdot v\notin H$. 
Furthermore, since finite dimensional Grassmaninan manifolds over $k$ are compact, by choosing metrics (there is a natural choice but it's not important), 
there is $r>0$ such that $F$ is $r$-separating, i.e. for every $v,H$ as above there is $g\in F$  with $d(g\cdot v,H)>r$.
Since $\langle a,b\rangle c$ is unbounded, and we consider only finitely many representations (the irreducible components of the exterior power representations), we can fix one such representation $V$ such that for every $\gep>0$ 
there is an element $c'\in \langle a,b\rangle c$ which is $\gep$-contracting in $\BP(V)$. That is, there is an hyperplane $H=H(c')\subset\BP(V)$ and a projective point $v=v(c')\in\BP(V)$ such that $c'$ takes the complement of the $\gep$-neighborhood of $H$ into the $\gep$-ball around $v$.
Let $L$ be an upper bound for the Lipschitz constants of $F$, and take $\gep<\frac{r}{L+1}$. Then, for a suitable $g\in F$, we have $d(g\cdot v,H)>r$. But then $c''=gc'$ takes the complement of the $\gep$-neighborhood of $H=H(c')$ into the $L\gep$-ball around $g\cdot v$. Since these two sets are disjoint, the element $c''$ is of infinite order.
Recall that $c''$ is primitive since it belongs to $\langle a,b\rangle c$.
\end{proof}


\section{Primitive-unipotent representations}

Let $U(d)\subset \GL(d,\BC)$ denote the set of $d\times d$ unipotent complex matrices. That is $U(d)$ is the set of matrices whose characteristic polynomial is $(x-1)^d$.
Note that the logarithm restricted to $U(d)$ is a polynomial isomorphism between $U(d)$ and the set of nilpotent matrices in $M_d(\BC)$, and its inverse $\exp$ is also a polynomial map. 
This implies that the cyclic group generated by any non-trivial element $u\in U(d)$ is Zariski connected and one dimensional, concretely, $\overline{\langle u\rangle}^Z=\exp(\{t\log u:t\in\BC\})$.

We say that a representation $\rho:F_n\to\GL(d,\BC)$ is a {\it unipotent representation} if $\rho(F_n)\subset U(d)$. 
If $\rho$ is a unipotent representation then $\rho(F_n)$ is a Zariski connected nilpotent group which can be conjugated into the group of upper triangular unipotents. 
This follows from the fact that an algebraic group which is not nilpotent contains a non-trivial semisimple element which is a consequence of the Levi decomposition. In fact if $\rho$ is a unipotent representation then the Lie algebra of the Zariski closure of $\rho(F_n)$ is the vector space $\text{span}(\log(\rho(F_n)))$ which consists of nilpotent matrices and form a Nilpotent Lie algebra by Engel's theorem.

We denote by $\mathcal{U}(n,d)$ the variety of $d$-dimensional unipotent representations of $F_n$. 
We shall say that a representation $\rho:F_n\to\GL(d,\BC)$ is {\it primitive-unipotent} if the image of any primitive element is unipotent, that is if $\rho(\mathcal{P}_n)\subset U(d)$. We shall denote by $\mathcal{PU}(n,d)$ the variety of $d$-dimensional primitive-unipotent representations of $F_n$. Obviously, $\mathcal{U}(n,d)\subset\mathcal{PU}(n,d)$. Platonov and Potapchik asked in \cite{PP} whether every primitive-unipotent representation is unipotent, i.e. whether the two varieties coincide. 
We conjecture that for $n\ge 3$ the answer to this questions is positive:

\begin{conj}[Platonov-Potapchik]\label{conj:PP}
The two varieties $\mathcal{U}(n,d)$ and $\mathcal{PU}(n,d)$ coincide, for every $n\ge 3$ and $d$.
\end{conj}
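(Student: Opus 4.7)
The plan is to reduce Conjecture \ref{conj:PP} to the Zariski density Conjecture \ref{conj:2} applied to the Zariski closure of $\rho(F_n)$, and then to extract unipotency of every element of that closure from the fact that the primitives have unipotent image via a characteristic polynomial argument.

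First, let $\rho\in\mathcal{PU}(n,d)$ and set $H=\overline{\rho(F_n)}^Z\le\GL(d,\BC)$. I would begin by observing that $H$ is Zariski connected: each basis element $x_i$ is primitive, so $\rho(x_i)$ is unipotent, and in characteristic zero the map $t\mapsto \exp(t\log\rho(x_i))$ is a morphism $\BC\to H$ whose Zariski connected image contains $\rho(x_i)$, forcing $\rho(x_i)\in H^\circ$. Since the $\rho(x_i)$ generate $\rho(F_n)$, we have $\rho(F_n)\subset H^\circ$, hence $H=H^\circ$.

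Next, assuming (a suitable form of) Conjecture \ref{conj:2} for the Zariski dense representation $\rho\colon F_n\to H$, the $\aut(F_n)$-orbit of $\rho$ is Zariski dense in $\Hom(F_n,H)\cong H^n$. Consider the polynomial map
\[
 \Psi\colon \Hom(F_n,H)\to\BC^d,\qquad \rho'\mapsto \big(\text{coefficients of } \det(tI-\rho'(x_1))\big).
\]
For any $\sigma\in\aut(F_n)$, the element $(\sigma\cdot\rho)(x_1)=\rho(\sigma^{-1}(x_1))$ is the image of a primitive element, hence unipotent, so $\Psi$ is constantly equal to the coefficient vector of $(t-1)^d$ on the orbit. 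By Zariski density of the orbit and polynomiality of $\Psi$, the equality $\Psi\equiv (t-1)^d$ holds on all of $\Hom(F_n,H)$. Evaluating at the constant homomorphism $\rho_h$ with $\rho_h(x_j)=h$, for arbitrary $h\in H$, shows that every $h\in H$ has characteristic polynomial $(t-1)^d$, i.e.\ is unipotent. By Kolchin's theorem, $H$ is conjugate into the upper triangular unipotent subgroup, and hence $\rho\in\mathcal{U}(n,d)$.

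The main obstacle is the invocation of Conjecture \ref{conj:2}, which is stated here only for simple algebraic groups, whereas $H$ is an arbitrary Zariski connected linear algebraic group. A natural way to try to bypass this is through the Levi decomposition $H=R_u(H)\rtimes L$: projecting $\rho$ to the reductive quotient $L$ and asking whether $\rho(\mathcal{P}_n)$ is Zariski dense in $L$ (a variant of Conjecture \ref{conj:1}). Since unipotents form a Zariski closed subvariety and a reductive algebraic group consists entirely of unipotents only if it is trivial, such a density statement would force $L=\{1\}$, i.e.\ $H=R_u(H)$ is unipotent. The torus case of this reduction is elementary, as unimodular integer vectors additively generate $\BZ^n$, but extending Conjectures \ref{conj:1} and \ref{conj:2} to semisimple and ultimately reductive $L$, together with controlling the interaction between $R_u(H)$ and $L$ under $\aut(F_n)$-translates, appears to be the principal technical hurdle.
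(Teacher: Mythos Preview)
The statement you are trying to prove is presented in the paper as an open \emph{conjecture}; the paper does not claim a proof of it. What the paper actually does is (i) reduce Conjecture~\ref{conj:PP} to the simple-group case, Conjecture~\ref{conj:PP2}, via exactly the kind of Levi-decomposition argument you sketch at the end, (ii) prove it unconditionally in the special case where some free basis is sent to \emph{regular} unipotents (Theorem~\ref{thm:PP-regular}), using a dynamical contraction argument on projective space (Lemma~\ref{lem:nonunipotent}), and (iii) deduce from that the equivalence of Conjecture~\ref{conj:PP} with the irreducibility of $\mathcal{PU}(n,d)$ (Theorem~\ref{thm}). None of this settles the conjecture.

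Your proposal is likewise not a proof but a reduction to other open conjectures, and you say so yourself. Two remarks on the reduction. First, your direct invocation of Conjecture~\ref{conj:2} for the full Zariski closure $H$ is not merely ``stated only for simple groups'' --- it is genuinely \emph{false} in that generality: as soon as $H$ has a nontrivial abelian quotient $A$, the $\aut(F_n)$-action on $\Hom(F_n,A)\cong A^n$ factors through $\GL_n(\BZ)$ and every orbit is countable, hence never Zariski dense. So the Levi route you describe is not optional but forced. Second, once you project to the reductive Levi $L$, the torus part is indeed elementary (unipotents in a torus are trivial, and primitives generate $F_n$, so a dense image in a torus is incompatible with all primitives being unipotent unless the torus is trivial), and you are left exactly with the simple case --- i.e.\ Conjecture~\ref{conj:PP2}, which is what the paper isolates. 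Your characteristic-polynomial argument then just recovers the implication $\ref{conj:2}\Rightarrow\ref{conj:1}\Rightarrow\ref{conj:PP2}$ already recorded in the paper.

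In short: there is no proof to compare against, and your write-up rediscovers the paper's reduction to the simple case while missing the paper's unconditional partial result (the regular-unipotent case via projective dynamics), which is where the actual new content lies.
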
  

We suppose below that $n\ge 3$.
It follows from the last line of the first paragraph of this section that if $\rho\in \mathcal{PU}(n,d)$ then $\rho(F_n)$ is Zariski connected. 
Since any non-solvable Lie group admits a semisimple quotient and by Lie's theorem a connected solvable complex Lie group can be conjugated to the group of upper triangular matrices, Conjecture \ref{conj:PP} is equivalent to:

\begin{conj}\label{conj:PP2}
If $\rho:F_n\to G$ is a representation with Zariski dense image, where $G\le \GL(d,\BC)$ is a connected simple algebraic group, then $\rho(\mathcal{P}_n)\not\subset {U}(d)$.
\end{conj}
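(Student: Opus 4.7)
The plan is to derive Conjecture~\ref{conj:PP2} as a formal consequence of Conjecture~\ref{conj:0}. Suppose, toward contradiction, that $\rho:F_n\to G$ is Zariski dense with $\rho(\mathcal{P}_n)\subset U(d)$. Conjecture~\ref{conj:0} gives a proper free factor $A<F_n$ with $\overline{\rho(A)}^Z=G$. By Nielsen's theorem $\aut(F_n)$ acts transitively on the set of proper free factors of any fixed rank, and precomposition with an automorphism preserves both Zariski density of the image and the primitive-unipotent condition (since $\aut(F_n)$ permutes $\mathcal{P}_n$). So, after precomposing by a suitable automorphism, we may assume $A=\langle x_1,\ldots,x_m\rangle$ for some $m<n$.

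The key step is a primitivity observation: for every $a\in A$ the tuple $(x_1,\ldots,x_{n-1},a\cdot x_n)$ is a basis of $F_n$, hence $a\cdot x_n\in\mathcal{P}_n$. Therefore $\rho(A)\cdot\rho(x_n)\subset\rho(\mathcal{P}_n)\subset U(d)$. Since right translation by $\rho(x_n)$ is a Zariski automorphism of $G$, taking Zariski closures yields
$$
G\cdot\rho(x_n)\;=\;\overline{\rho(A)}^Z\cdot\rho(x_n)\;\subset\; U(d)\cap G.
$$
As $\rho(x_n)\in G$, this forces $G\subset U(d)$, i.e.\ every element of $G$ is unipotent. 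By Kolchin's theorem a subgroup of $\GL(d,\BC)$ consisting of unipotent matrices is simultaneously conjugate into the upper unitriangular group, hence is nilpotent and in particular solvable — contradicting the fact that a Zariski connected simple algebraic group is non-solvable.

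The main obstacle is that Conjecture~\ref{conj:0} is itself open. A partial unconditional version of the above already yields something: setting $H_i:=\overline{\rho(\langle x_j:j\ne i\rangle)}^Z$, the same primitivity observation gives $H_i\cdot\rho(x_i)\subset U(d)\cap G$ for every $i$, so we are done as soon as some $H_i$ equals $G$. The hard case is when every $(n{-}1)$-generator sub-image sits inside a proper algebraic subgroup of $G$, which is precisely the failure of Zariski redundancy. To handle this case unconditionally one might attempt the proximality-contraction strategy of Proposition~\ref{prop:4.5}: choose a pair $(a,b)\in\rho(F_n)^2$ with Zariski dense join and a local field completion $k$ of the field generated by the matrix entries in which $\rho(F_n)$ is unbounded, then produce a primitive element $c''=g\cdot W(a,b)\cdot c$ acting proximally on some irreducible $G$-subrepresentation of an exterior power of $\BC^d$; the proximal dynamics would exhibit an eigenvalue of absolute value $\ne 1$, directly contradicting the unipotence of $c''\in\rho(\mathcal{P}_n)$. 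The delicate issue is that $a,b\in\rho(\mathcal{P}_n)$ may themselves be unipotent and hence individually non-proximal, so obtaining the required contraction requires either working with higher-order Cartan/Iwasawa data or selecting $(a,b)$ among the (generic, by Lemma~\ref{lem:Z-dense-open}) non-primitive elements of $\rho(F_n)$ while still arranging the coset $\langle a,b\rangle c$ to meet $\mathcal{P}_n$.
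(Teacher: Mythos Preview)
This statement is labeled a \emph{conjecture} in the paper, and the paper does not prove it unconditionally. So there is no ``paper's own proof'' to compare against; what the paper supplies is (i) a brief argument that Conjecture~\ref{conj:PP2} is equivalent to Conjecture~\ref{conj:PP}, and (ii) a partial unconditional result, Theorem~\ref{thm:PP-regular}, under the extra hypothesis that some primitive basis maps to \emph{regular} unipotents.

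Your conditional reduction from Conjecture~\ref{conj:0} is correct. It is essentially the chain of implications the paper already sketches in Section~4: Conjecture~\ref{conj:0} $\Leftrightarrow$ Conjecture~\ref{conj:2} $\Rightarrow$ Conjecture~\ref{conj:1}, and Conjecture~\ref{conj:1} immediately yields Conjecture~\ref{conj:PP2} since $U(d)\cap G$ is a proper Zariski-closed subset of the simple group $G$. Your coset formulation $\rho(A)\cdot\rho(x_n)\subset U(d)$ and the passage to closures is a clean way to package that last step, and you are right that you may assume $A=\langle x_1,\dots,x_m\rangle$ after an automorphism. None of this, however, goes beyond what the paper already records as formal implications among open conjectures.

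Where you genuinely diverge from the paper is in the unconditional direction. You propose to adapt the proximality argument of Proposition~\ref{prop:4.5}, aiming to produce a primitive element whose image has an eigenvalue of modulus $\ne 1$. The paper does not pursue this route for Conjecture~\ref{conj:PP2}. Instead, its partial result (Theorem~\ref{thm:PP-regular}) assumes the basis images $a_i=\rho(x_i)$ are regular unipotents and argues by invariant-subspace combinatorics: using Lemma~\ref{lem:nonunipotent} (a projective contraction criterion forcing non-unipotence), it shows that primitivity and the primitive-unipotent hypothesis force the invariant flags of the $a_i$ to interact so rigidly that the whole group becomes reducible, and then inducts on the dimension. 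That argument buys an honest unconditional theorem on a Zariski-open set of $\mathcal{PU}(n,d)$, which is exactly what is needed for Theorem~\ref{thm}. Your proximality sketch, by contrast, faces the obstacle you yourself flag---unipotent generators are not proximal---and you do not indicate how to overcome it; so as it stands it is a plausible heuristic rather than a proof.
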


The variety $\mathcal{U}(n,d)$ can be described explicitly as follows. Let $N\le \GL(d,\BC)$ be the group of upper triangular unipotent matrices. Then 
$$
 \mathcal{U}(n,d)=\{f^g:f\in\text{Hom}(F_n,N),g\in \GL(d,\BC)\},
$$
where $f^g(x):=gf(x)g^{-1}$. In particular, it follows that $\mathcal{U}(n,d)$
is irreducible. Thus we conjecture:

\begin{conj}\label{cong:irreducible}
The variety $\mathcal{PU}(n,d)$ is irreducible.
\end{conj}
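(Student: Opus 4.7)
The plan is to reduce Conjecture \ref{cong:irreducible} to Conjecture \ref{conj:PP}, which is a priori stronger. Since $\mathcal{U}(n,d)$ is closed and irreducible (it is the image of the irreducible variety $\text{Hom}(F_n,N)\times\GL_d(\BC)\cong N^n\times\GL_d(\BC)$ under the conjugation map, and Kolchin's theorem gives closedness) and sits inside $\mathcal{PU}(n,d)$, the equality $\mathcal{U}(n,d)=\mathcal{PU}(n,d)$ immediately yields irreducibility. To establish this equality I would induct on $d$. The base case $d=1$ is vacuous. In the inductive step, given $\rho\in\mathcal{PU}(n,d)$, it suffices by Kolchin's theorem to produce a common fixed vector for $\rho(F_n)$: restricting to that invariant hyperplane gives a representation in $\mathcal{PU}(n,d-1)$ and the quotient lies in $\mathcal{PU}(n,1)=\{1\}$, so by induction $\rho$ is conjugate into the upper-triangular unipotent group, i.e.\ $\rho\in\mathcal{U}(n,d)$.

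For $d=2$ the strategy works cleanly. For every pair $i\ne j$ the word $x_ix_j$ is primitive (apply the Nielsen transformation $L_{i,j}^{+}$ to the standard basis), hence $\rho(x_ix_j)\in U(2)$. A direct basis computation shows that if two nontrivial elements $A,B$ of $U(2)$ have distinct fixed lines, then writing them in the basis adapted to those lines gives $\text{tr}(AB)=2+ab$ with $ab\ne 0$, contradicting unipotency of $AB$; therefore $\rho(x_i)$ and $\rho(x_j)$ share their unique fixed line. Since a nontrivial element of $U(2)$ has a unique fixed line, all nontrivial $\rho(x_k)$ share one and the same fixed line, and $\rho\in\mathcal{U}(n,2)$.

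The main obstacle is the inductive step for $d\ge 3$: two unipotent matrices $A,B\in\GL_d(\BC)$ whose product $AB$ is also unipotent need not share a fixed line — they may share only a fixed subspace of intermediate dimension — so the pairwise conditions from $\rho(x_ix_j)$ alone no longer suffice. My attack would bring in many more primitive words: for every $i\ne j$ and every $k\in\BZ$ the word $x_i^kx_j$ is primitive (iterate $L_{i,j}^{\pm}$), yielding the trace identity $\text{tr}(\rho(x_i)^k\rho(x_j))=d$ for all $k\in\BZ$. Writing $\rho(x_i)=\exp(X_i)$ with $X_i$ nilpotent, this forces $\text{tr}(p(X_i)\rho(x_j))=d$ for every polynomial $p$ with $p(0)=1$, imposing strong structural constraints on $\rho(x_j)$ relative to the $\rho(x_i)$-invariant flag. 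In the extreme case, when $\rho(F_n)$ has Zariski closure a simple algebraic group (the setting of Conjecture \ref{conj:PP2}), one would hope to derive a contradiction by a dynamical/contraction argument in the spirit of the proof sketch of Proposition \ref{prop:4.5}, using Lemma \ref{lem:Z-dense-open} to extract enough generic unbounded elements in $\rho(F_n)$ whose primitive conjugates could not all be unipotent.

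If the full PP is out of reach, a fallback that still yields the irreducibility conjecture alone is to establish only Zariski density of $\mathcal{U}(n,d)$ in $\mathcal{PU}(n,d)$. One natural attempt is to construct, for each $\rho\in\mathcal{PU}(n,d)$, a one-parameter algebraic family $\rho_t\in\mathcal{PU}(n,d)$ with $\rho_1=\rho$ and $\rho_0\in\mathcal{U}(n,d)$, for instance by conjugating with a generic one-parameter torus and taking a GIT-type limit. The difficulty here is verifying that the entire family stays inside $\mathcal{PU}(n,d)$, which is an intersection of infinitely many closed conditions indexed by $\mathcal{P}_n$; a priori a torus limit can leave the primitive-unipotent locus along some primitive word even if it respects each of $x_1,\ldots,x_n$.
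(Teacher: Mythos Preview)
The statement is a conjecture, and the paper does not prove it; what the paper proves is Theorem~\ref{thm}, the \emph{equivalence} of Conjectures~\ref{cong:irreducible} and~\ref{conj:PP}. You have recovered the easy implication ($\mathcal{PU}=\mathcal{U}\Rightarrow\mathcal{PU}$ irreducible) and then set out to prove Conjecture~\ref{conj:PP} itself. That is an open problem, and your proposal does not close it: the $d=2$ argument is fine, but for $d\ge 3$ you yourself name the gap and offer only heuristics (trace identities from $x_i^k x_j$, a hoped-for contraction argument, a torus degeneration whose compatibility with the infinitely many primitive constraints you cannot verify). Note also that your ``fallback'' is not actually weaker: since you grant that $\mathcal{U}(n,d)$ is closed, its Zariski density in $\mathcal{PU}(n,d)$ is literally the equality $\mathcal{U}=\mathcal{PU}$, i.e.\ Conjecture~\ref{conj:PP} again.

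For contrast, the paper's nontrivial contribution here is the reverse implication of Theorem~\ref{thm} (irreducibility $\Rightarrow$ PP), and it runs through Theorem~\ref{thm:PP-regular}: if some free basis is sent to \emph{regular} unipotents and $\rho\in\mathcal{PU}$, then already $\rho\in\mathcal{U}$. That is proved by a concrete flag/contraction argument built on Lemma~\ref{lem:nonunipotent}. Since the locus of such $\rho$ is Zariski open and nonempty in $\mathcal{PU}$, irreducibility forces it to be dense, and closedness of $\mathcal{U}$ then gives $\mathcal{U}=\mathcal{PU}$. Your proposal bypasses this regular-unipotent mechanism and instead attacks the open Platonov--Potapchik conjecture head-on, which is strictly more than what the paper claims or achieves.
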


\begin{thm}\label{thm}
Conjecture \ref{cong:irreducible} and Conjecture \ref{conj:PP} are equivalent.  
\end{thm}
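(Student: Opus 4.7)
The direction $(\ref{conj:PP})\Rightarrow(\ref{cong:irreducible})$ is immediate: the description $\mathcal{U}(n,d)=\{f^g:f\in\Hom(F_n,N),\ g\in\GL_d(\BC)\}$ presents $\mathcal{U}(n,d)$ as the image of an irreducible variety under a morphism, hence as an irreducible variety. So if $\mathcal{PU}(n,d)=\mathcal{U}(n,d)$, the former is irreducible.

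For the converse, assume $\mathcal{PU}(n,d)$ is irreducible. Then $\mathcal{U}(n,d)$ sits as a closed irreducible subvariety inside the irreducible variety $\mathcal{PU}(n,d)$, so equality reduces to exhibiting a nonempty Zariski open subset of $\mathcal{PU}(n,d)$ contained in $\mathcal{U}(n,d)$; equivalently, $\mathcal{U}(n,d)$ must be an irreducible component of $\mathcal{PU}(n,d)$, which by irreducibility then coincides with $\mathcal{PU}(n,d)$. My plan to establish this is via a Zariski tangent space computation at a smooth point $\rho_0\in\mathcal{U}(n,d)$ chosen generically so that $\rho_0(F_n)$ is Zariski dense in a connected unipotent subgroup $H_0\le\GL_d(\BC)$. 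For a first-order deformation $\rho_\epsilon(x_i)=\rho_0(x_i)(I+\epsilon X_i)$ with $X_i\in\mathfrak{gl}_d(\BC)$, the condition that $\rho_\epsilon(w)$ remains unipotent to first order translates, via the vanishing of the first-order expansion of $(\rho_\epsilon(w)-I)^d$, into a linear equation $L_w=0$ on the $X_i$; then $T_{\rho_0}\mathcal{U}(n,d)$ is cut out by $\{L_w:w\in F_n\}$ while $T_{\rho_0}\mathcal{PU}(n,d)$ is cut out by the a priori smaller system $\{L_p:p\in\mathcal{P}_n\}$.

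The hard part will be showing that these two linear systems have the same solution space. I would exploit the nilpotent structure of $\rho_0(F_n)\subset H_0$ --- in particular, by Engel's theorem, the set $\{\log\rho_0(p):p\in\mathcal{P}_n\}$ already spans $\Lie(H_0)$ --- together with Fox calculus and the Nielsen-orbit description $\mathcal{P}_n=\aut(F_n)\cdot x_1$, to argue that every linear condition $L_w$ already lies in the span of the $L_p$'s. Once this tangent space equality is established at $\rho_0$, $\mathcal{U}(n,d)$ fills a Zariski open neighborhood of $\rho_0$ in $\mathcal{PU}(n,d)$, and irreducibility of $\mathcal{PU}(n,d)$ forces $\mathcal{U}(n,d)=\mathcal{PU}(n,d)$.
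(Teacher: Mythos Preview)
The easy direction $(\ref{conj:PP})\Rightarrow(\ref{cong:irreducible})$ is handled exactly as in the paper. Your reduction of the converse to showing that $\mathcal{U}(n,d)$ is an irreducible component of $\mathcal{PU}(n,d)$ is also sound: once you know $T_{\rho_0}\mathcal{PU}=T_{\rho_0}\mathcal{U}$ at a smooth point $\rho_0$ of $\mathcal{U}$, a dimension count forces $\rho_0$ to be smooth in $\mathcal{PU}$ as well, so $\mathcal{U}$ is the unique component through $\rho_0$, and irreducibility of $\mathcal{PU}$ finishes.

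The gap is in the ``hard part.'' Your sketch for the tangent space equality does not actually give an argument. The observation that $\{\log\rho_0(p):p\in\mathcal{P}_n\}$ spans $\Lie(H_0)$ concerns the \emph{image} of $\rho_0$; it says nothing about whether the linear functionals $L_p$ on $\bigoplus_i\mathfrak{gl}_d$ span the same space as the $L_w$. Fox calculus lets you write $L_w$ in terms of the Fox derivatives and the $X_i$, but the constraint you need is that the span of $\{L_p:p\in\mathcal{P}_n\}$ equals that of $\{L_w:w\in F_n\}$, and nothing in your outline addresses why Nielsen moves on the index $p$ suffice to generate all these functionals. In effect, the tangent space equality you seek is the infinitesimal version of the very statement $\mathcal{PU}=\mathcal{U}$ near $\rho_0$; you have linearized the problem but not made it easier.

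The paper takes an entirely different route. It first proves, via a dynamical criterion on projective space (Lemma~\ref{lem:nonunipotent}), the standalone Theorem~\ref{thm:PP-regular}: any $\rho\in\mathcal{PU}(n,d)$ sending a free basis to \emph{regular} unipotents already lies in $\mathcal{U}(n,d)$. Since regularity is Zariski open within unipotents, the locus $X=\{\rho:\rho(x_i)\text{ regular unipotent for all }i\}$ meets $\mathcal{PU}$ in a nonempty Zariski open set contained in $\mathcal{U}$. If $\mathcal{PU}$ is irreducible, this open set is dense, and since $\mathcal{U}$ is closed one gets $\mathcal{PU}=\mathcal{U}$. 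The substantive content is thus Theorem~\ref{thm:PP-regular}, whose proof exploits the rigidity of the invariant flag of a regular unipotent --- an ingredient with no counterpart in your plan.
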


The proof relies on the following: 

\begin{lem}[A dynamical criterion for detecting nontrivial eigenvalues]\label{lem:nonunipotent}
Let $g\in\SL_n(\BC)$ and suppose that there are $x\in \BP^{n-1}(\BC)$ and $0<r_2<r_1$, such that $g\cdot B(x,r_1)\subset B(x,r_2)$. Then $g$ is not unipotent.
\end{lem}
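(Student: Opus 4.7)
The plan is to assume $g$ is unipotent and derive a contradiction by computing the spectral radius of the differential $dg_p$ at a fixed point $p$ of $g$ inside $\overline{B(x,r_2)}$ in two incompatible ways.

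First I would locate the fixed point and extract the upper bound on the spectrum. Since $g$ maps the domain $B(x,r_1)$ holomorphically into itself with image $g(B(x,r_1))\subset \overline{B(x,r_2)}$ relatively compact in $B(x,r_1)$, the Earle--Hamilton fixed-point theorem (equivalently, a Banach-type contraction argument for the Kobayashi metric, which is a genuine metric on the hyperbolic domain $B(x,r_1)$) produces a unique fixed point $p\in\overline{B(x,r_2)}$ and shows that the iterates $g^k$ converge to $p$ uniformly on compact subsets of $B(x,r_1)$. Differentiating at $p$ yields $(dg_p)^k=d(g^k)_p\to 0$, which forces every eigenvalue of $dg_p:T_p\BP^{n-1}(\BC)\to T_p\BP^{n-1}(\BC)$ to have modulus strictly less than $1$.

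Next I would use the unipotent assumption to obtain a contradictory lower bound. If $g$ is unipotent, its only eigenvalue is $1$, so the fixed point $p=[v]\in\BP^{n-1}(\BC)$ corresponds to an eigenvector $v$ with $gv=v$. Choosing any complement to $\BC v$ gives an affine chart on $\BP^{n-1}(\BC)$ centered at $p$, in which a direct computation identifies $dg_p$ with the operator induced by $g$ on the quotient $\BC^n/\BC v$. This induced operator is again unipotent, being a quotient of a unipotent operator, so all its eigenvalues are equal to $1$. This contradicts the strict bound from the previous step, and hence $g$ is not unipotent.

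The main obstacle I foresee is a minor technical one: justifying the use of Earle--Hamilton (or equivalently of the Kobayashi contraction) on the ball $B(x,r_1)\subset\BP^{n-1}(\BC)$. This is immediate whenever $r_1$ is small enough that $B(x,r_1)$ sits inside an affine chart and therefore is biholomorphic to a bounded hyperbolic domain in $\BC^{n-1}$. The general case reduces to this by replacing $g$ by a sufficiently high iterate $g^{k_0}$, which is unipotent if and only if $g$ is and whose image in $B(x,r_1)$ can be arranged to lie in an arbitrarily small sub-ball by the contraction of Kobayashi diameters under iteration.
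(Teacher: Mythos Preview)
Your proof is correct and follows essentially the same route as the paper's: locate a fixed point in the contracted ball, show that the differential of $g$ there is ``small'' (you bound the spectral radius by $1$ from above strictly; the paper bounds the Jacobian determinant), and then observe that for a unipotent $g$ the differential at a projective fixed point is again unipotent, giving a contradiction. The only difference is packaging: you invoke Earle--Hamilton and the Kobayashi metric to make the fixed-point and contraction steps rigorous, whereas the paper argues informally ``by taking iterations'' --- but the underlying idea is the same.
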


\begin{proof}
By taking iterations, it is easy to see that $g$ must have a (unique) fixed point in $B(x,r_1)$. Moreover, the Jacobian of $g$ at that fixed point is smaller than $1$. However, a direct computation shows that the differential of a unipotent matrix at a fixed point in the projective space is also unipotent.
\end{proof}

We will make use of the following consequence of Lemma \ref{lem:nonunipotent}. Recall that a unipotent is called {\it regular} if the second diagonal of its Jordan form consists of $1$'s only. Equivalently, a unipotent element is regular if its conjugacy class is dense in $U(d)$. We shall make use of the fact that a regular unipotent admits a unique invariant subspace of $\BC^d$ of any dimension $m=1,\ldots,d-1$. 

It is not hard to verify by a direct computation that iterated powers of a regular unipotent contract uniformly, away from the invariant hyperplane. Formally, if $u$ is a regular unipotent with fixed projective point $p$ and invariant projective hyperplane $H$ then for every $\epsilon$ there is $n_\gep$ (which depends also on the metric one choses for $\BP^{d-1}$) such that for $n\ge n_\gep$, $u^n$ contract the complement of the $\gep$-neighborhood of $H$ into the $\gep$-ball around $p$.  Thus the following is a consequence of Lemma \ref{lem:nonunipotent}.

\begin{cor}
Let $u$ be a regular unipotent with fixed projective point $p$ and invariant hyperplane $H$, and let $a$ be any linear transformation. If $a\cdot p\notin H$ then for all large $k$, $au^k$ is not a unipotent.
\end{cor}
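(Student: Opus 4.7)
The plan is to apply Lemma \ref{lem:nonunipotent} to the transformation $g = au^k$ with center $x = a\cdot p$, producing for each sufficiently large $k$ a projective ball that $au^k$ contracts strictly into itself.

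First I would fix the geometric parameters. Set $\delta := d(a\cdot p, H) > 0$, which is positive by the hypothesis $a\cdot p \notin H$. Fix any $r_1 \in (0, \delta/2)$ and put $r_2 := r_1/2$. By continuity at $p$ of the projective transformation induced by $a$, there is $\epsilon > 0$ such that $a(B(p,\epsilon)) \subset B(a\cdot p, r_2)$; shrinking $\epsilon$ if necessary we may also assume $\epsilon < \delta - r_1$, which guarantees that the ball $B(a\cdot p, r_1)$ is contained in the complement of the $\epsilon$-neighborhood of $H$.

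With $\epsilon$ now fixed, I would invoke the uniform contraction property of regular unipotents stated just before the corollary: there exists $k_0$ such that for every $k \ge k_0$, the map $u^k$ sends the complement of the $\epsilon$-neighborhood of $H$ into $B(p,\epsilon)$. Combining the two inclusions, for every $k \ge k_0$ one obtains
$$
au^k\bigl(B(a\cdot p, r_1)\bigr) \subset a\bigl(B(p,\epsilon)\bigr) \subset B(a\cdot p, r_2),
$$
with $r_2 < r_1$. Lemma \ref{lem:nonunipotent} then forces $au^k$ to be non-unipotent for all $k \ge k_0$, as required.

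There is no real obstacle in this argument, since all the depth has already been extracted into Lemma \ref{lem:nonunipotent} and into the uniform contraction statement for regular unipotents. The only thing to verify is that the three parameter constraints ($r_2 < r_1$, the neighborhood inclusion $\epsilon + r_1 < \delta$, and the image condition $a(B(p,\epsilon)) \subset B(a\cdot p, r_2)$) can be arranged simultaneously, which is immediate by choosing $r_1$ first, then $r_2$, and finally $\epsilon$ using continuity of $a$ at $p$.
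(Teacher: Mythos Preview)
Your proposal is correct and is exactly the argument the paper intends: the paper simply asserts that the corollary is a consequence of Lemma~\ref{lem:nonunipotent} together with the uniform contraction property of regular unipotents, without spelling out the choice of center, radii, and $\epsilon$. Your write-up supplies precisely those details in the natural way.
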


Let us now prove Conjecture \ref{conj:PP2} under the assumption that some primitive basis is sent into the big set of regular unipotent.

\begin{thm}\label{thm:PP-regular}
Let $n\ge 3$ and let $\rho\in \mathcal{PU}(n,d)$. 
Suppose further that the $\rho(x_i),~i=1,\ldots,n$ are regular, where $(x_1,\ldots,x_n)$ is some free basis of $F_n$. Then $\rho\in\mathcal{U}(n,d)$.
\end{thm}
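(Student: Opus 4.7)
The plan is to proceed by induction on $d$; the case $d=1$ is trivial since $U(1)=\{I\}$. Given the inductive hypothesis, the goal is to produce a common $\rho(F_n)$-invariant proper subspace $W\subset\BC^d$. From such a $W$, since each $\rho(x_i)$ is regular unipotent and therefore has only the subspaces of its unique flag as invariants, $W$ must equal $V_k(\rho(x_i))$ for every $i$, where $k=\dim W$. The restriction $\rho|_W$ and the quotient representation on $\BC^d/W$ are then again primitive-unipotent with regular unipotent generators in lower dimension (these properties are preserved when one restricts to or quotients by a flag element of a regular unipotent), so the inductive hypothesis applies to both. Each $\rho(g)$ is therefore block-upper-triangular with unipotent diagonal blocks, hence unipotent, and $\rho\in\mathcal{U}(n,d)$.

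Write $u_i:=\rho(x_i)$, with fixed projective point $p_i$ and invariant hyperplane $H_i$. For each pair $i\ne j$, the word $x_i x_j^k$ is primitive in $F_n$ for every $k\in\BZ$ (the Nielsen transformation $R_{j,i}$ iterated $k$ times sends the basis $(x_1,\dots,x_n)$ to one whose $i$-th entry is $x_i x_j^k$), so $u_i u_j^k=\rho(x_i x_j^k)$ is unipotent, and the Corollary following Lemma~\ref{lem:nonunipotent} (applied with $a=u_i$, $u=u_j$) gives $u_i\cdot p_j\in H_j$. The same reasoning with the primitive $x_i^a x_j^b$, valid whenever $\gcd(a,b)=1$, extends this to $u_i^a\cdot p_j\in H_j$ for every $a\in\BZ$ (for each $a$, pick $b$ coprime to $a$ with $|b|$ arbitrarily large). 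Hence the smallest $u_i$-invariant linear subspace containing $\tilde p_j$ is contained in $H_j$; being of the form $V_m(u_i)$ with $m\le d-1$, it yields in particular $\tilde p_j\in V_{d-1}(u_i)=H_i$ for all pairs $(i,j)$. Every $\tilde p_j$ therefore lies in the common subspace $M:=\bigcap_{i=1}^n H_i$, of dimension at least one.

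The main step is to upgrade this to $\rho(F_n)\cdot\tilde p_j\subset M$, which makes $W=\mathrm{Span}(\rho(F_n)\cdot\tilde p_j)\subset M$ the desired proper invariant subspace. For a single generator $u_k$, the inclusion $u_k\cdot\tilde p_j\in H_k$ is automatic from $\tilde p_j\in H_k$ and $u_k H_k=H_k$, while $u_k\cdot\tilde p_j\in H_j$ is the first step. For indices $i\notin\{j,k\}$, I use the primitive $x_k x_j^b x_i^a$ together with the Corollary: for $|a|$ large the element $u_k u_j^b u_i^a$ first contracts generic points off $H_i$ to $p_i\in H_j$, and as $|b|\to\infty$ inside $H_j$ — where $u_j|_{H_j}$ is itself a regular unipotent with fixed point $p_j$ — the image converges to $u_k\cdot p_j$, so the unipotence of $u_k u_j^b u_i^a$ forces $u_k\cdot p_j\in H_i$ in the limit. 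Iterating, given the inclusion for all words shorter than $g\in F_n$ one builds a primitive of the form $\widetilde g\cdot x_j^b\cdot x_i^a$ (where $\widetilde g$ is obtained from $g$ by Nielsen transformations and the hypothesis $n\ge 3$ guarantees a distinct contracting index $i$) whose dynamical behaviour forces $\rho(g)\tilde p_j\in H_i$. The principal obstacle is the careful combinatorial verification that the required primitives can be realised in $F_n$, together with handling degenerate configurations such as $p_i=p_j$ or $p_i\in V_{d-2}(u_j|_{H_j})$, which either lead immediately to an easier conclusion (a common fixed point giving an invariant line) or are resolved by choosing a tailored primitive that bypasses the degeneracy.
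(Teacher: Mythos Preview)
Your inductive framework (reduce to finding a common invariant subspace, then recurse on $W$ and $\BC^d/W$) is the same as the paper's, and your first conclusion $p_j\in H_i$ is correct.  There are, however, two problems.

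First, a factual slip: the element $x_i^{a}x_j^{b}$ is \emph{not} primitive in $F_n$ once $|a|,|b|\ge 2$, even when $\gcd(a,b)=1$ (for example $x_1^{2}x_2^{3}$ has connected Whitehead graph with no cut vertex, so it is of minimal length $5$ in its $\aut(F_n)$-orbit).  This particular step can be repaired by using a third index $l\notin\{i,j\}$: the word $x_i^{a}x_l x_j^{k}$ \emph{is} primitive for all $a,k$ (place it in the $x_l$-slot and right--multiply by the untouched generator $x_j$), and from $u_i^{a}u_l\cdot p_j\in H_j$ one gets a $\langle u_i\rangle$-invariant subspace inside $H_j$, hence $p_j\in H_i$.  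This is exactly how the paper obtains its Corollary $v_i\in H_j$.

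Second, and this is the real gap: the induction ``given the inclusion for all words shorter than $g$ one builds a primitive of the form $\widetilde g\,x_j^{b}x_i^{a}$'' is not carried out, and in fact cannot be carried out as stated.  Primitive elements are thin; for a general reduced word $g$ there is no primitive of the form $g\cdot x_j^{b}x_i^{a}$, and you give no mechanism to replace $g$ by some $\widetilde g$ with $\rho(\widetilde g)=\rho(g)$ for which such a primitive exists.  Your own $x_i^{a}x_j^{b}$ already shows the obstruction at word-length two; for longer words using all $n$ letters it only gets worse.  You flag this (``the principal obstacle is the careful combinatorial verification\dots''), but it is not a bookkeeping issue --- the dynamical criterion only sees primitives, and you have not bridged from primitives to arbitrary $\rho(g)p_j$.

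The paper sidesteps this entirely.  After establishing $v_i\in H_j$, it does \emph{not} attempt to prove $\rho(F_n)\cdot v_j\subset\bigcap_i H_i$.  Instead, assuming irreducibility, it first deforms (via Nielsen moves keeping the generators regular) to \emph{good position}, meaning $H_i^{k}\ne H_j^{k}$ for all $k$ and all $i\ne j$.  It then lets $k$ be the dimension of the smallest $a_2$-invariant subspace containing $v_1$, \emph{maximized} over all good-position deformations.  Maximality forces $v_1\in H_j^{k}$ for every $j$; choosing $k$ conjugates $a_2^{i_s}v_1$ that span $H_2^{k}$, each of which (by the same maximality, applied after conjugating the first slot) lies in $H_3^{k}$, yields $H_2^{k}=H_3^{k}$, contradicting good position.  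This extremal argument is what replaces your missing induction.
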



Denote $a_i=\rho(x_i)$. For simplicity let us first explain the proof of Theorem \ref{thm:PP-regular} under the assumption that $n\ge 4$.

\begin{clm}\label{clm:inv-sbsps}
For every $i,j$ the group $\langle a_i,a_j\rangle$ is not irreducible.
\end{clm}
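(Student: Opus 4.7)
The plan is a proof by contradiction: assuming $\langle a_i,a_j\rangle$ acts irreducibly on $\BC^d$, I will produce an explicit proper nonzero $\langle a_i,a_j\rangle$-invariant subspace. The case $i=j$ is trivial, since a single unipotent already stabilises its invariant hyperplane, so take $i\ne j$ and fix an auxiliary index $k\in\{1,\ldots,n\}\setminus\{i,j\}$, available because $n\ge 4$.

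The first step is a primitivity input: for every word $W$ in $x_i,x_j$ and every $m\in\BZ$, the element $x_k\,W(x_i,x_j)\,x_i^m\in F_n$ is primitive. I would verify this by iterating Nielsen transformations $R_{l,k}^{\pm}$ with $l\in\{i,j\}$, which allow us to multiply the $k$-th basis element on the right by any prescribed word in $\{x_i,x_j\}$, producing a basis whose $k$-th entry is $x_k\,W(x_i,x_j)\,x_i^m$. By the primitive-unipotent hypothesis, $a_k\,W(a_i,a_j)\,a_i^m$ is then unipotent for every such $W$ and $m$.

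The second step applies the corollary following Lemma~\ref{lem:nonunipotent} with $u=a_i$ (regular unipotent, fixed line spanned by $p_i$, invariant hyperplane $H_i$) and $a=a_k W(a_i,a_j)$: were $a_k W(a_i,a_j)\cdot p_i\notin H_i$, then $a_k W(a_i,a_j)\,a_i^m$ would fail to be unipotent for all large $m$, contradicting the first step. Hence $W(a_i,a_j)\cdot p_i\in a_k^{-1}H_i$ for every word $W$ in $a_i,a_j$. The subspace $V=\text{span}\{W(a_i,a_j)p_i : W\text{ a word in }a_i,a_j\}$ is then $\langle a_i,a_j\rangle$-invariant by construction, nonzero since $p_i\in V$, and contained in the linear hyperplane $a_k^{-1}H_i$, hence proper, which contradicts the assumed irreducibility. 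The one place requiring real care is the Nielsen verification in the first step, ensuring that the power $a_i^m$ can be inserted on the right of $W$ without destroying primitivity; after that the conclusion is a formal consequence of the corollary.
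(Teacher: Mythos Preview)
Your proof is correct and follows the paper's approach: apply the corollary of Lemma~\ref{lem:nonunipotent} to primitive elements ending in a large power of a regular unipotent, thereby forcing the $\langle a_i,a_j\rangle$-orbit of a vector into a hyperplane, whose span is then the desired proper invariant subspace. The only difference is that the paper uses a fourth index---taking the contracting unipotent $a_1$ outside the pair $\{a_2,a_3\}$ and inserting the auxiliary $a_4$ between the word and the power---whereas you recycle $a_i$ itself as the contracting element and place the auxiliary $a_k$ in front; your variant therefore needs only three distinct indices and (though you did not note it) already works for $n\ge 3$.
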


We will show that $\langle a_2,a_3\rangle$ is not irreducible. Let $v_1,H_1$ be an invariant unit vector and the invariant hyperplane of $a_1$. Then for every $a\in \langle a_2,a_3\rangle$, $aa_4\cdot v_1\in H_1$ for otherwise the previous lemma implies that the element $aa_4a_1^k$ is not unipotent for large $k$. However $aa_4a_1^k$ is the image of the primitive element $xx_4x_1^k$ where $x$ is the corresponding element in $\langle x_2,x_3\rangle$, and as such it is unipotent by assumption.
Thus 
$$
 \text{span}\langle a_2,a_3\rangle\cdot (a_4 v_1)
$$ 
is an $\langle a_2,a_3\rangle$-invariant subspace of $H_1$.
\qed

\medskip

Note that the element $a_1$ played no special role in the proof above. Thus if $H_k$ denotes the invariant hyperplane of $a_k$ we deduce from the proof:
%
\begin{cor}
The $\langle a_i,a_j\rangle$ invariant subspace we found is contained in $H_k$ for every $k=1,\ldots,n$.
\end{cor}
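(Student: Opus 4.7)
The plan is to observe that the argument in the proof of Claim \ref{clm:inv-sbsps} used $a_1$ only as a regular unipotent ``contractor'' providing the dynamical non-unipotency obstruction of Lemma \ref{lem:nonunipotent}, and $a_4$ only as a ``kicker'' lifting the fixed point of the contractor off its invariant hyperplane. Since every $a_m$ is regular unipotent by the hypothesis of Theorem \ref{thm:PP-regular}, the same argument should apply verbatim once $(a_1, a_4)$ are replaced by a suitable pair $(a_k, a_l)$, provided the corresponding test words remain primitive in $F_n$.

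Fix an arbitrary $k \in \{1, \ldots, n\}$ and, using $n \ge 4$, choose an auxiliary index $l \in \{1, \ldots, n\} \setminus \{i, j, k\}$. The key bookkeeping step will be to verify that for every $x \in \langle x_i, x_j\rangle$ and every $m \in \BZ$, the word $x x_l x_k^m$ is primitive in $F_n$. This follows from a standard chain of Nielsen transformations: first repeatedly right-multiply the $l$-th coordinate by $x_k^{\pm 1}$ to build $x_l x_k^m$, then left-multiply that same coordinate by $x_i^{\pm 1}$ and $x_j^{\pm 1}$ to prepend the prefix $x$. Since $x_i$, $x_j$, and $x_k$ all remain intact as basis elements throughout, each step is a legitimate Nielsen move.

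With primitivity in hand, I will repeat the proof of Claim \ref{clm:inv-sbsps} with $(a_k, v_k, a_l)$ in place of $(a_1, v_1, a_4)$: if some $a \in \langle a_i, a_j\rangle$ had $a a_l v_k \notin H_k$, then by the corollary to Lemma \ref{lem:nonunipotent} the element $a a_l a_k^m$ would fail to be unipotent for all sufficiently large $m$, contradicting $\rho \in \mathcal{PU}(n,d)$ since $a a_l a_k^m = \rho(x x_l x_k^m)$ is primitive. This forces $a a_l v_k \in H_k$ for every $a \in \langle a_i, a_j\rangle$, so $\text{span}\,\langle a_i, a_j\rangle \cdot (a_l v_k)$ is an $\langle a_i, a_j\rangle$-invariant subspace contained in $H_k$, as required. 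The only delicate point will be the primitivity verification in the boundary cases $k \in \{i, j\}$, where the contractor generator $x_k$ also lies inside the subgroup under consideration; but since the right and left Nielsen multiplications act on different positions of the basis tuple, this causes no actual obstruction, only careful ordering of the moves.
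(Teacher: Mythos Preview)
Your argument is correct and is precisely what the paper intends: its entire justification is the sentence ``the element $a_1$ played no special role in the proof above,'' i.e., rerun the proof of Claim~\ref{clm:inv-sbsps} with $a_k$ in the role of the contractor and some $a_l$, $l\notin\{i,j,k\}$, in the role of the kicker $a_4$. One small simplification: your careful handling of the boundary cases $k\in\{i,j\}$ is unnecessary, since any proper $\langle a_i,a_j\rangle$-invariant subspace is in particular $a_k$-invariant and therefore, by regularity of $a_k$, already contained in $H_k$.
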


We can now conclude by induction that $\langle a_1,\ldots,a_n\rangle$ is reducible. Indeed, as $a_1$ is regular, there are only finitely many options for the $\langle a_1,a_2\rangle$ invariant proper subspace. Substituting  $a_3^ka_2$ (so that the latter is still regular) instead of $a_2$ for more $k$'s than the dimension we derive that one of these finitely many subspaces is also $a_3$ invariant. Proceeding in the same way we eventually find a proper subspace which is invariant under all the generators. 

Hence the action is reducible. Say $V\le \BC^d$ is a proper invariant subspace. By applying the above argument to $V$ and to $\BC^d/V$, we may proceed and conclude that there is an invariant complete flag. This completes the proof of Theorem \ref{thm:PP-regular} for $n\ge 4$.

\medskip

Let us now explain the argument for $n\ge 3$. Arguing as in the proof of Claim \ref{clm:inv-sbsps},
with $\langle a_2\rangle$ playing the role of $\langle a_2,a_3\rangle$ and $a_3$ the role of $a_4$ we get that $a_2$ stabilizes some proper subspace of $H_1$. Since the indices $1,2$ are arbitrary, it follows that for each $i,j$, the element $a_i$ stabilizes some invariant subspace of $H_j$. Denoting by $v_i$ the invariant line of $a_i$, we deduce: 

\begin{cor}
For all $i,j$ 
$$ 
 v_i\in H_j.
$$
\end{cor}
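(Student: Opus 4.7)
The plan is to leverage the fact, just established in the paragraph before the corollary, that for every pair $i,j$ with $i\ne j$ the regular unipotent $a_i$ stabilizes some nonzero proper subspace $W\subseteq H_j$. Concretely, picking any third index $k$ (which exists since $n\ge 3$), the argument gave $W=\mathrm{span}\,\langle a_i\rangle\cdot(a_k v_j)$, which is $a_i$-invariant, is nonzero because $a_k$ is invertible and $v_j\ne 0$, and lies in $H_j$ by the contraction criterion (Lemma~\ref{lem:nonunipotent}).

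The main (in fact only) remaining input is the following structural fact about regular unipotents: if $u\in\GL(d,\mathbb{C})$ is a regular unipotent, then its lattice of invariant subspaces of $\mathbb{C}^d$ is totally ordered and consists of exactly one subspace of each dimension $0,1,\ldots,d$, namely the complete flag
\[
 0\subset\langle v_u\rangle\subset V_u^{(2)}\subset\cdots\subset V_u^{(d-1)}=H_u\subset\mathbb{C}^d,
\]
where $v_u$ is the invariant line and $H_u$ the invariant hyperplane. This is standard: conjugating $u$ into its Jordan form $I+N$ with $N$ a single nilpotent Jordan block, the invariant subspaces are exactly the kernels of the powers $N^m$. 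In particular, every nonzero $u$-invariant subspace contains $\langle v_u\rangle$.

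Applying this to $u=a_i$, the subspace $W$ constructed above must be one of the members of the flag of $a_i$, so it contains the invariant line: $\langle v_i\rangle\subseteq W\subseteq H_j$. This gives $v_i\in H_j$ for all $i\ne j$. The case $i=j$ is immediate since $\langle v_i\rangle$ and $H_i$ are the bottom and top nonzero proper members of the flag of $a_i$, so $v_i\in H_i$ trivially. I do not anticipate a real obstacle here; the only thing to be careful about is ensuring the subspace produced by the previous argument is genuinely nonzero, which is why one invokes $n\ge 3$ to choose a third generator whose image does not annihilate $v_j$.
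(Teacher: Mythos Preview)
Your proof is correct and is precisely the deduction the paper intends: having established that $a_i$ stabilizes a nonzero proper subspace $W\subseteq H_j$, you invoke the fact (already recorded in the paper) that a regular unipotent has a unique invariant subspace of each dimension, hence every nonzero $a_i$-invariant subspace contains $v_i$, giving $v_i\in W\subseteq H_j$. The paper simply writes ``we deduce'' at this point, and your argument is exactly the intended justification.
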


We may suppose that our group is irreducible for otherwise we may proceed by induction on the dimension as above.
For each $i,k$ denote by $H_i^k$ the $k$-dimensional subspace invariant under $a_i$.
Let us say that a primitive tuple consisting of regular unipotent elements $(a_1,\ldots,a_n)$ is in a {\it good position} if for any $1\le i\ne j\le n$ and $k\le n$ we have $H_i^k\ne H_j^k$,
i.e. if any pair $(a_i,a_j)$ generates an irreducible subgroup.
Up to deforming the original tuple, by performing operations of the form $a_i\mapsto a_j^ka_i$, we may assume that it is in good position.

Let us assume that the $a_2$-invariant subspace of minimal dimension which contains $v_1$ is of maximal dimension among all possibilities (i.e. when allowing to apply Nielsen transformations as long as the new tuple is in good position and consists of regular elements). Suppose the dimension of this space is $k<d$. By maximality of $k$, it follows that $v_1\in H_j^{k}$ for every $j$.

We can find $k$ translations $a_2^{i_1}\cdot v_1,\ldots ,a_2^{i_k}\cdot v_1$ that span $H_2^k$ so that if we replace $a_1$ by $a_2^{i_j}a_1a_2^{-i_j}$, each of the $k$ new tuples 
$$
 (a_2^{i_j}a_1a_2^{-i_j},a_2,\ldots,a_n),~j=1,\ldots,k
$$ 
will also be in good position. However, $a_2^{i_j}\cdot v_1$ is the invariant vector of $a_2^{i_j}a_1a_2^{-i_j}$, so again, by maximality of $k$ we have:
$$
 a_2^{i_j}\cdot v_1\in H_3^k.
$$
Thus $H_3^k=H_2^k$ in contrary to the assumption of good position.

This completes the proof of Theorem \ref{thm:PP-regular}.
\qed


\begin{proof}[Proof of Theorem \ref{thm}]
Fix a primitive basis $(x_1,\ldots,x_n)$ for $F_n$.
It follows from Theorem \ref{thm:PP-regular} that if for $\rho:F_n\to\GL_d(\BC)$ the elements $\rho(x_i),~i=1,\ldots,n$ are regular unipotent then $\rho \notin \mathcal{PU}\triangle\mathcal{U}$, i.e. if $\rho\notin\mathcal{U}$ it is also $\notin\mathcal{PU}$. Since the set
$$
 X=\{\rho:\rho(x_1),\ldots,\rho(x_n)~\text{are regular unipotent}\}
$$  
clearly intersects $\mathcal{PU}$ in a Zariski open set, we derive that if every irreducible component of $\mathcal{PU}$ intersects $X$ nontrivially then $\mathcal{U}=\mathcal{PU}$. In particular this is the case if $\mathcal{PU}$ is irreducible. 
\end{proof}


\section{Baby Wiegold's conjecture}\label{sec:BB}

A. Lubotzky promoted the following conjecture, under the name `Baby Wiegold conjecture':

\begin{conj}[Lubotzky]\label{conj:Baby-Wiegold} 
For $n\ge 3$, the free group $F_n$ does not contain a finite index characteristic subgroup $\gD\lhd F_n$ such that $G=F_n/\gD$ is simple. 
\end{conj}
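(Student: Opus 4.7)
My approach is by contradiction: suppose $\gD\lhd F_n$ is a finite-index characteristic subgroup with $G:=F_n/\gD$ simple, $n\ge 3$, and let $\pi:F_n\to G$ be the quotient. The abelian case is immediate: if $G\cong\BZ/p\BZ$, then $\gD\supseteq[F_n,F_n]\cdot F_n^p$, so $\gD$ defines a codimension-one subspace $V\subset\BF_p^n=F_n^{\text{ab}}/p$; but $\aut(F_n)\twoheadrightarrow\GL_n(\BZ)\twoheadrightarrow\GL_n(\BF_p)$ acts transitively on hyperplanes when $n\ge 2$, so $V$ cannot be $\aut(F_n)$-invariant, contradicting characteristicness.

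Henceforth assume $G$ is non-abelian simple. Characteristicness, combined with $G$ being centerless, yields a well-defined homomorphism $\rho:\aut(F_n)\to\aut(G)$ with $\rho(\gs)\circ\pi=\pi\circ\gs$. Its restriction to $\text{Inn}(F_n)=F_n$ is $\pi$ composed with $G\cong\text{Inn}(G)\hookrightarrow\aut(G)$, so $\rho(\aut(F_n))\supseteq\text{Inn}(G)$. Since $\mathrm{Stab}_{\aut(F_n)}(\pi)=\ker\rho$, the $\aut(F_n)$-orbit of $\pi$ in $\text{Epi}(F_n,G)$ has size $|\rho(\aut(F_n))|\le|\aut(G)|$.

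The endgame combines this with the Gilman--Evans theorem \cite{Gi,Ev93b} that $\RR(F_n,G)$ is a single $\aut(F_n)$-orbit. A direct count yields $|\RR(F_n,G)|\ge|\text{Epi}(F_{n-1},G)|\cdot|G|$ (extend any generating $(n-1)$-tuple arbitrarily), and by Liebeck--Shalev's proof of Dixon's conjecture $|\text{Epi}(F_{n-1},G)|\ge c_G|G|^{n-1}$ for non-abelian finite simple $G$ with $c_G$ bounded away from $0$. Thus $|\RR(F_n,G)|\gtrsim|G|^n$, which for $n\ge 3$ vastly exceeds $|\aut(G)|\le|G|\cdot|\out(G)|$ (finitely many small $G$ are checked directly). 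So everything reduces to the following \textbf{Key Lemma:} \emph{under the above hypotheses, $\pi:F_n\to G$ is redundant.} Granted this, $\pi$ lies in the unique redundant orbit, forcing $|\RR(F_n,G)|\le|\aut(G)|$, the required contradiction.

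The Key Lemma is the main obstacle. One line of attack uses the subgroup $\mathfrak{S}_n<\aut(F_n)$ permuting generators: the transposition $(i\;j)$ produces $\bar\gs_{ij}\in\aut(G)$ swapping $\pi(x_i),\pi(x_j)$ and fixing the other $\pi(x_k)$, so if $H_{ij}:=\langle\pi(x_k):k\ne i,j\rangle=G$ for some pair, then $\bar\gs_{ij}=\text{id}$, $\pi(x_i)=\pi(x_j)$, and the primitive word $x_ix_j^{-1}\in\gD\cap\mathcal{P}_n$ witnesses redundancy. This disposes of cases where some $(n-2)$-subset surjects onto $G$. The remaining case --- the entire content when $n=3$, where each $H_{ij}$ is cyclic hence automatically proper --- requires exploiting Nielsen transformations $L_{ij}:x_j\mapsto x_ix_j$, which yield elements of the pointwise stabilizer $C_{\aut(G)}(H_j)$ that left-multiply $\pi(x_j)$ by $\pi(x_i)$; iterating these along with right-Nielsen and $\mathfrak{S}_n$ moves exhibits the entire double coset $H_j\pi(x_j)H_j$ inside the single $C_{\aut(G)}(H_j)$-orbit of $\pi(x_j)$, which is faithful. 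The hope is that the resulting lower bound on $|C_{\aut(G)}(H_j)|\le|C_G(H_j)|\cdot|\out(G)|$ combined with subgroup-structure constraints on non-abelian finite simple groups --- probably via the classification of finite simple groups --- forces the configuration to collapse. Making this rigidity argument uniform in $G$ is where the genuine difficulty lies, and a CFSG-free route would be a substantial achievement.
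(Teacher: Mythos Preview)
The statement in question is an \emph{open conjecture} in the paper; the paper states it, observes its equivalence with the non-existence of $\aut(F_n)$-fixed points in $\text{Epi}(F_n,G)/\aut(G)$, and then moves on to analogues in the algebraic-group setting. There is no proof in the paper to compare your proposal against.

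On the merits of your plan: the reduction step is valid. Characteristicness of $\gD$ does force the $\aut(F_n)$-orbit of $\pi$ in $\text{Epi}(F_n,G)$ to have size at most $|\aut(G)|$; and if $\pi$ lies in the (unique, by Gilman--Evans) redundant orbit then $|\mathcal{R}(F_n,G)|\le|\aut(G)|$, which for $n\ge 3$ contradicts the Liebeck--Shalev lower bound $|\mathcal{R}(F_n,G)|\gtrsim|G|^n$ once $|G|$ is large (with finitely many small $G$ to check). But this shows only that the conjecture is \emph{equivalent} to your Key Lemma, not that it reduces to something easier: the Baby Wiegold conjecture vacuously implies the Key Lemma (since its hypothesis becomes empty), so you have reformulated the open problem rather than made progress on it.

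Your partial attacks on the Key Lemma do not close the gap. The $\mathfrak{S}_n$ argument treats only the case where some $(n-2)$-subset of the $\pi(x_k)$ already generates $G$ --- but then $\pi$ is redundant by definition (the restriction to the free factor $\langle x_k:k\ne i,j\rangle$ surjects), so characteristicness is not even used there. The Nielsen-transformation observation yielding $|H_j\,\pi(x_j)\,H_j|\le|C_{\aut(G)}(H_j)|$ is a genuine constraint, but you yourself flag that extracting a contradiction ``uniformly in $G$'' is where the real difficulty lies, and you offer no argument for the critical case $n=3$. As written this is a programme for attacking an open conjecture, not a proof.
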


Equivalently, for any $n\ge 3$ and a finite simple group $G$, there is no $\aut(F_n)$-fixed point in $\text{Epi}(F_n,G)/\text{Aut}(G)$.
Obviously, this is a weak version of the Wiegod conjecture which says that the action of $\aut(F_n)$ on $\text{Epi}(F_n,G)/\text{Aut}(G)$ has a unique orbit. 

Let us consider the analog scenario in the algebraic group setup.

\begin{defn}
A linear representation $\rho:F_n\to \SL_d(\BC)$ is called {\it characteristic} if $\rho\circ\gs$ is conjugated to $\rho$ in $\SL_d(\BC)$ for every $\gs\in \aut(F_n)$, that is if the corresponding character is an $\aut(F_n)$-fixed point in the character variety.
\end{defn}

The following is an analog of Conjecture \ref{conj:Baby-Wiegold} in the algebraic group context:

\begin{conj}\label{conj:fixed}
Suppose that $n\ge 3$, and let $\rho$ be a characteristic linear representation of 
$F_n$. Then $\rho(F_n)$ is virtually solvable.
\end{conj}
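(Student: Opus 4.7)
The plan is to deduce Conjecture \ref{conj:fixed} from Conjecture \ref{conj:2} by contradiction. Assume $\rho:F_n\to\SL(d,\BC)$ is characteristic but $\rho(F_n)$ is not virtually solvable, and set $H=\overline{\rho(F_n)}^Z$. I would first produce a Zariski-dense representation into a simple group: since $H$ is not virtually solvable, the quotient of $H^\circ$ by its solvable radical is a nontrivial connected semisimple group, so projecting further onto one of its simple factors yields a Zariski-connected simple algebraic group $G$ together with a surjective algebraic homomorphism $\pi:H\to G$. Setting $\rho':=\pi\circ\rho:F_n\to G$ then gives a Zariski-dense representation.

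Next I would descend the characteristic hypothesis to $\rho'$. For each $\sigma\in\aut(F_n)$ choose $g_\sigma\in\SL(d,\BC)$ with $\rho\circ\sigma=g_\sigma\rho g_\sigma^{-1}$. Since $\rho\circ\sigma$ and $\rho$ share the image $\rho(F_n)$, the $g_\sigma$-conjugation preserves $H$ and hence the characteristic subgroups $H^\circ$ and the solvable radical $R(H^\circ)$; descending to $H^\circ/R(H^\circ)$, it permutes the simple factors. Thus $\rho'\circ\sigma=\pi\circ\rho\circ\sigma$ agrees, up to a diagram automorphism and the choice of an isomorphic simple factor, with the projection of $\rho$ to one of finitely many simple factors. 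Since $H^\circ/R(H^\circ)$ has only finitely many simple factors and $\out(G)$ is finite, the $\aut(F_n)$-orbit of the character $[\rho']$ in the character variety $\chi_n(G):=\Hom(F_n,G)/\!/G$ is finite.

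The final step is to clash this finiteness with Conjecture \ref{conj:2}: applied to the Zariski-dense $\rho'$, it asserts that the $\aut(F_n)$-orbit of $\rho'$ is Zariski dense in $\Hom(F_n,G)\cong G^n$. The categorical quotient $\Hom(F_n,G)\to\chi_n(G)$ is dominant, so the image of that orbit is Zariski dense in $\chi_n(G)$. But $\dim\chi_n(G)\ge(n-1)\dim G>0$ for $n\ge 2$ and $G$ simple, and a finite set cannot be Zariski dense in a positive-dimensional variety. This contradiction shows $\rho(F_n)$ must be virtually solvable.

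The step I expect to be most delicate is the descent of the characteristic hypothesis through $\pi$: one must carefully verify that the abstract $g_\sigma$-conjugation on $H$ induces a genuinely algebraic automorphism of $H^\circ/R(H^\circ)$, and that the interplay between the permutation action on the simple factors and the outer automorphisms of $G$ still leaves only finitely many possibilities for $[\rho'\circ\sigma]$ in $\chi_n(G)$. Once this bookkeeping is in place, the remainder is the purely formal clash between the finiteness of $\out(G)$ together with the simple-factor structure and the Zariski density supplied by Conjecture \ref{conj:2}.
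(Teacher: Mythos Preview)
The statement you are addressing is a \emph{conjecture} in the paper, not a theorem: the paper does not prove Conjecture~\ref{conj:fixed}, and there is no ``paper's own proof'' to compare against. Your proposal is therefore not a proof of the statement but a conditional reduction, deducing Conjecture~\ref{conj:fixed} from Conjecture~\ref{conj:2}. Since Conjecture~\ref{conj:2} is itself open in the paper, you have not established Conjecture~\ref{conj:fixed}; you have at best established an implication between two open conjectures.

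That said, the implication you sketch is a reasonable and, as far as I can see, essentially correct observation that goes somewhat beyond what the paper explicitly records. The paper notes that the Section~4 conjectures imply Conjecture~\ref{conj:FZL} and proves (Proposition~\ref{prop:4.5}) that Conjecture~\ref{conj:2} implies Conjecture~\ref{conj:Zalmanov}, but it does not state that Conjecture~\ref{conj:2} implies the full Conjecture~\ref{conj:fixed}. Your argument for that stronger implication is sound in outline: after passing to an adjoint simple quotient $G$ of $H^\circ/R(H^\circ)$, conjugation by $g_\sigma$ is an algebraic automorphism of $H$ (being conjugation inside $\GL_d$), hence descends to an automorphism of the semisimple quotient; any surjection $H\to G$ is determined (for adjoint $G$) by its restriction to $H^\circ$, which factors through a projection to one of the finitely many simple factors followed by an element of $\Aut(G)$; since $\out(G)$ is finite, only finitely many classes $[\rho'\circ\sigma]$ arise in $\chi_n(G)$. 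The clash with Zariski density from Conjecture~\ref{conj:2} then follows exactly as you say. The delicate bookkeeping you flag is genuine but manageable once one works with the adjoint form of $G$.

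In short: your argument is a valid \emph{conditional} contribution, but it is not a proof of the conjecture, and the paper offers no proof with which to compare it.
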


Another variant is:

\begin{conj}
If $\overline{\rho(F_n)}^Z$ is simple then $\rho$ is not characteristic.
\end{conj}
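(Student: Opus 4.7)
The plan is to reduce the conjecture to Conjecture \ref{conj:1}, at least when $G:=\overline{\rho(F_n)}^Z$ is positive-dimensional; the complementary case in which $G$ is finite is precisely the baby Wiegold Conjecture \ref{conj:Baby-Wiegold}.

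The first step is to observe that the characteristic hypothesis forces $\rho(\mathcal{P}_n)$ into a single $\SL_d(\BC)$-conjugacy class $C$. By Nielsen's theorem $\aut(F_n)$ acts transitively on $\mathcal{P}_n$, so given any primitive $p$ there exists $\sigma\in\aut(F_n)$ with $\sigma(x_1)=p$. The hypothesis provides $g_\sigma\in\SL_d(\BC)$ with $\rho\circ\sigma=g_\sigma\rho g_\sigma^{-1}$; evaluating at $x_1$ yields $\rho(p)=g_\sigma\rho(x_1)g_\sigma^{-1}$, placing $\rho(p)$ in the $\SL_d(\BC)$-conjugacy class $C$ of $\rho(x_1)$.

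Now assume for contradiction that $G$ is positive-dimensional and Zariski connected simple. By Conjecture \ref{conj:1}, $\rho(\mathcal{P}_n)$ is Zariski dense in $G$. Since $\rho(\mathcal{P}_n)\subset C$, the elements of a Zariski dense subset of $G$ all share the characteristic polynomial of $\rho(x_1)$; by continuity of the characteristic polynomial as a regular map $G\to\BC^d$, this equality extends to all of $G$. But the characteristic polynomial is non-constant on any positive-dimensional simple algebraic subgroup of $\SL_d(\BC)$: already the trace is a non-constant class function, being the character of the faithful non-trivial representation $G\hookrightarrow\SL_d(\BC)$. This contradiction finishes the argument.

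The main obstacle is the dependence on the still-open Conjecture \ref{conj:1}. A natural alternative -- adapting the dynamical contraction argument of Proposition \ref{prop:4.5} to produce a primitive whose image has Jordan form distinct from that of $\rho(x_1)$ -- is itself blocked, since that argument leans on Conjecture \ref{conj:2}. A genuinely new input seems required; one possible direction is to exploit the cocycle structure of the conjugators $g_\sigma$ furnished by the characteristic hypothesis. For example, the Nielsen move $x_1\mapsto x_1 x_j$ produces an element $g_j\in\SL_d(\BC)$ commuting with $\rho(x_2),\ldots,\rho(x_n)$ and satisfying $g_j\rho(x_1)g_j^{-1}=\rho(x_1)\rho(x_j)$, and combining the constraints arising from all such centralizers might yield an algebraic obstruction to Zariski density in a simple $G$.
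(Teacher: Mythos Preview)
This statement is a \emph{conjecture} in the paper, not a theorem; the paper offers no proof, only the single remark (immediately following the conjecture) that a characteristic $\rho$ sends $\mathcal{P}_n$ into a single $\SL_d(\BC)$-conjugacy class. So there is no ``paper's own proof'' to compare against.

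That said, your reduction is correct and is very much in the spirit of the paper. Your Step~1 (characteristic $\Rightarrow$ $\rho(\mathcal{P}_n)$ lies in one conjugacy class) is exactly the observation the paper records after the conjecture. Your Step~2--4 argument---Conjecture~\ref{conj:1} forces $\rho(\mathcal{P}_n)$ Zariski dense in $G$, hence the characteristic polynomial is constant on $G$, hence every element of $G$ is unipotent, contradicting simplicity---is sound. (The quickest way to finish: $G$ simple and positive-dimensional contains a nontrivial torus, whose nontrivial elements are semisimple with eigenvalues not all $1$, so their characteristic polynomial differs from $(t-1)^d$.) The paper makes the parallel remark in \S5 that the conjectures of \S4 imply Conjecture~\ref{conj:FZL} via the same mechanism, so you have correctly identified the intended web of implications.

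You are also honest that this is conditional on the open Conjecture~\ref{conj:1}, and that an unconditional argument would require a new idea. Your suggestion to exploit the cocycle relations among the conjugators $g_\sigma$ is reasonable but speculative; the paper does not pursue it. In short: nothing is wrong, but nothing here goes beyond what the paper already indicates, and the conjecture remains open.
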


If $\rho$ is characteristic then $\rho(\mathcal{P}_n)$ is contained in a single conjugacy class of $\SL_d(\BC)$. 
We shall regard that conjugacy class as the {\it type} of $\rho$.
 
Observe that if we identify $F_n$ with the normal subgroup 
$$
 F_n\cong\text{Inn}(F_n)\lhd\aut(F_n)
$$ 
then the restriction $\rho |_{F_n}$ of any linear representation 
$\rho:\aut(F_n)\to\GL_d(\BC)$ is characteristic. Thus
Conjecture \ref{conj:FZL} of Formanek--Zelmanov and Lubotzky is a special case of the following special case of Conjecture \ref{conj:fixed}: 

\begin{conj}[Generalized FZL]\label{conj:G-FZL}
Let $n\ge 3$ and let $\rho:F_n\to\SL_d(\BC)$ be a characteristic representation whose type is virtually unipotent. Then $\rho(F_n)$ is virtually solvable.  
\end{conj}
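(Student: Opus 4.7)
The plan is to deduce Conjecture \ref{conj:G-FZL} from Conjecture \ref{conj:2} by refining the dynamical argument used in Proposition \ref{prop:4.5}. First, I would reduce to the case where $G := \overline{\rho(F_n)}^Z$ is Zariski connected and simple. If $\rho(F_n)$ is not virtually solvable, let $R$ be the solvable radical of $G$; then $G/R$ is a non-trivial semisimple group, and composing $\rho$ with the projection to a simple factor yields a Zariski-dense representation $\rho':F_n\to G'$. The hypothesis that $\rho$ is characteristic with virtually unipotent type means $\rho(\mathcal{P}_n)$ is contained in a single $\SL_d(\BC)$-conjugacy class of virtually unipotent matrices; since virtual unipotence is preserved by any rational representation of algebraic groups, $\rho'(\mathcal{P}_n)$ likewise consists entirely of virtually unipotent elements of $G'$.

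Next, with $\rho'$ Zariski dense in the simple group $G'$, I would invoke Conjecture \ref{conj:2} to find a primitive triple $(a,b,c)$ in $\rho'(F_n)$ for which $\langle a,b\rangle$ is Zariski dense in $G'$. Following the argument of Proposition \ref{prop:4.5} essentially verbatim, I would choose a local completion $k$ of the field generated by the entries in which $\langle a,b\rangle$ is unbounded, pick a $G'$-irreducible constituent $V$ of an exterior power of the standard representation, and construct an $\epsilon$-contracting element $c''=gc'\in\langle a,b\rangle c$ acting on $\BP(V)$. As in that proof, $c''$ is the image of a primitive element of $F_n$.

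The key refinement of Lemma \ref{lem:nonunipotent} is this: if $c''$ strictly contracts a projective ball $B(x,r_1)$ into $B(x,r_2)$ with $r_2<r_1$, then every positive power $(c'')^m$ also strictly contracts $B(x,r_1)$ into itself, so by Lemma \ref{lem:nonunipotent} no positive power of $c''$ acts unipotently on $V$. Equivalently, some eigenvalue of $c''$ on $V$ is not a root of unity, so $c''$ is not virtually unipotent --- contradicting that $\rho'(\mathcal{P}_n)$ consists of virtually unipotent elements. This proves Conjecture \ref{conj:G-FZL} conditionally on Conjecture \ref{conj:2}.

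The main obstacle is of course the reliance on Conjecture \ref{conj:2}, which is itself wide open; the remainder of the argument is a fairly routine combination of the reduction of $\rho$ to the simple-closure case, the dynamical construction already present in Proposition \ref{prop:4.5}, and the strengthening of Lemma \ref{lem:nonunipotent} above to detect not merely non-unipotence but non-virtual-unipotence of the resulting primitive image.
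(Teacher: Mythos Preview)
This statement is presented in the paper as an open \emph{conjecture}; the paper does not prove it and offers no conditional deduction of the kind you propose. The only progress the paper records is the sentence immediately following the conjecture: when the type is a \emph{regular} unipotent conjugacy class, the conclusion follows from Theorem~\ref{thm:PP-regular} (and in fact $\rho(F_n)$ is then unipotent, hence nilpotent). There is thus no ``paper's own proof'' to compare against, and your proposal --- reducing Conjecture~\ref{conj:G-FZL} to Conjecture~\ref{conj:2} --- goes beyond anything the paper claims.

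Your conditional route is nonetheless a natural sharpening of Proposition~\ref{prop:4.5}, which only produces a primitive image of infinite order rather than one failing to be virtually unipotent. Two technical points would need attention before it becomes a clean implication. First, the reduction to a Zariski-connected simple target is more delicate than you indicate: for a merely \emph{virtually} unipotent type the Zariski closure $\overline{\rho(F_n)}^Z$ need not be connected (contrast the genuinely unipotent case handled in \S7), and passing to $G/R$ and a simple factor does not automatically yield a map of $F_n$ with dense image in a connected simple group; the paper itself, in the proof of Proposition~\ref{prop:4.5}, flags this issue and defers to the techniques of \cite{toti,MS}. Second, Lemma~\ref{lem:nonunipotent} is stated and proved over $\BC$, whereas the contraction produced in Proposition~\ref{prop:4.5} takes place on $\BP(V)$ over an auxiliary local field $k$ that may be non-archimedean; you would need to verify (it is true, via a local analysis near the fixed point) that a unipotent transformation cannot strictly contract a projective ball in that setting as well. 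Even granting these fixes, the outcome is an implication from one open conjecture to another, not an unconditional proof of Conjecture~\ref{conj:G-FZL}.
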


In view of Theorem \ref{thm:PP-regular}, Conjecture \ref{conj:G-FZL} holds for characteristic representations whose type is regular unipotent. 
Furthermore, for such representations the image $\rho(F_n)$ is a unipotent group and in particular nilpotent.


\section{Cosets in conjugacy classes}
The group $\aut({F_n})$ is generated by Nielsen transformations, and the primitive elements of $F_n$ form a conjugacy class in $\aut(F_n)$. 
This is one of the motivations to study representations $\rho$ of $F_n$ where $\rho(\mathcal{P}_n)$ is contained in a conjugacy class. 
If $(x_1,\ldots,x_n)$ is a primitive tuple in $F_n$ then any element of the form $x_1w(x_2,\ldots,x_n)$ is primitive, where $w$ is a word in $n-1$ variables. That is $x_1\langle x_2,\ldots,x_n\rangle\subset\mathcal{P}_n$. Thus, given an algebraic group $G$ we are particularly interested in the following:

\begin{prob}\label{ques:conj-coset} 
Classify pairs $(H,x)$ of a subgroup $H\le G$ and an element $x\in G$, such that $xH$ is contained in $x^G=\{gxg^{-1}:g\in G\}$. 
\end{prob}

Note that $xH\subset x^G \iff Hx\subset x^G \iff HxH\subset x^G$.

\begin{exam} For $G=GL_2$ the only examples (up to conjugation and replacing $H$ with its subgroup) are:
\begin{enumerate}
\item $x$ is diagonal, $H$ upper unitriangular.
\item $x$ is monomial, $H$ is diagonal.
\end{enumerate}

\end{exam}

However, already in $\GL_3$ the picture is much more complicated as we shall demonstrate below.
Yet, we believe that the following always holds:

\begin{conj}\label{conj:coset}
Let $H\le G=\GL_d(\BC)$ be a group such that for some $x\in G$ we have $Hx\subset x^G$. Then $H$ is virtually solvable.
\end{conj}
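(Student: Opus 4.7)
The strategy is to pass to the Zariski closure and show its identity component is solvable. Let $L := (\overline{H}^Z)^\circ \le \GL_d(\BC)$ denote the identity component of the Zariski closure of $H$; it suffices to show $L$ is solvable, since $L \cap H$ is of finite index in $H$. The condition $Hx \subset x^G$ translates to $Lx \subset \overline{x^G}$ because right multiplication by $x$ is an isomorphism of varieties, whence $\overline{Hx}^Z = \overline{H}^Z \cdot x \subset \overline{x^G}$. Equivalently, for every $\ell \in L$, the element $\ell x$ has the same characteristic polynomial as $x$, that is $\text{tr}(\wedge^k(\ell x)) = \text{tr}(\wedge^k x)$ for all $k = 1, \dots, d$.

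Next, I would translate this into a Lie-algebraic constraint. The map $\phi: G \to G$, $\phi(g) = [g, x]$, satisfies $H \subset \phi(G)$ directly from $Hx \subset x^G$, hence $L \subset \overline{\phi(G)}^Z$. Since $d\phi_1 = \text{Id} - \Ad(x)$ and $\phi$ factors as an immersion $G/Z_G(x) \hookrightarrow G$ (whose image is smooth at $1$), passing to tangent spaces at the identity yields
\[
  \mathfrak{l} := \text{Lie}(L) \;\subset\; (\text{Id} - \Ad(x))(\mathfrak{gl}_d).
\]
When $x$ is semisimple with eigenspaces $\{V_\lambda\}$, the right-hand side equals the block-off-diagonal subspace $\bigoplus_{\lambda \ne \mu} \Hom(V_\lambda, V_\mu)$ in the $x$-eigenbasis; in general it has a similar structure twisted by the Jordan part of $x$.

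The key claim is that any Lie subalgebra $\mathfrak{l} \subset \mathfrak{gl}_d$ lying inside $(\text{Id} - \Ad(x))(\mathfrak{gl}_d)$ is solvable. Suppose instead $\mathfrak{l}$ contains a nontrivial semisimple subalgebra $\mathfrak{s}$, and pick an $\mathfrak{sl}_2$-triple $\{H_0, E, F\} \subset \mathfrak{s}$. Since $\mathfrak{s}$ is reductive inside $\mathfrak{gl}_d$, the element $H_0$ is ad-semisimple in $\mathfrak{gl}_d$, hence diagonalizable in some basis adapted to a Cartan of $\mathfrak{gl}_d$. Imposing the relations $[H_0, E] = 2E$ and $[E, F] = H_0$ together with the condition that all three lie in the block-off-diagonal space shows, by a direct computation on weight decompositions (one forces $E$ to have a nonzero diagonal block in the $x$-eigenbasis from the bracket $[E,F] = H_0$, contradicting off-diagonality), that no such triple exists. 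Hence $\mathfrak{s} = 0$, and $L$ is solvable.

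The main obstacle I anticipate is handling $x$ with repeated eigenvalues or with a nontrivial unipotent part, where $(\text{Id} - \Ad(x))(\mathfrak{gl}_d)$ has a more intricate structure and the direct calculation above is less clean; the infinitesimal inclusion alone may also fail to rule out certain configurations. A more robust route uses the global (not merely infinitesimal) information: by the Tits alternative, if $H$ were not virtually solvable it would contain a non-abelian free subgroup $\langle a, b \rangle$. Embedding $H$ in $\GL_d(k)$ for a suitable local field $k$ in which the image is unbounded, one arranges some word in $a, b$ to be proximal on an exterior power representation. Then the identity $\text{tr}(a^n x) = \text{tr}(x)$ for all $n$, combined with the contraction of $a^n$ onto the attracting eigenvector $v_a$ away from the repelling hyperplane $H_a$, forces $\langle H_a^*, x v_a \rangle = 0$. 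Varying $a$ and using that such attracting/repelling pairs are Zariski dense in the appropriate Grassmannians (by strong irreducibility of the action on some $\wedge^k V$), one constrains $x$ until a contradiction with $x \in \GL_d$ is reached, in the spirit of the proof of Proposition~\ref{prop:4.5}.
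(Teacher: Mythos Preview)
The statement you are attempting to prove is presented in the paper as an \emph{open conjecture}; there is no proof in the paper to compare against. The paper only reformulates it in several equivalent or stronger ways (Conjectures~\ref{conj:9.6}, \ref{conj:9.5}, and the two unnumbered conjectures following them) and treats a few very special cases in the subsequent subsection. So your proposal is not a reconstruction of an existing argument but an attempt to settle a conjecture the author leaves open.

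Your main line of attack has a genuine gap: the ``key claim'' that every Lie subalgebra of $(\mathrm{Id}-\Ad(x))(\mathfrak{gl}_d)$ is solvable is \emph{false}. Take $d=3$ and $x\in\SL_3(\BC)$ diagonal with three distinct eigenvalues. Then the range of $\mathrm{Id}-\Ad(x)$ is exactly the space of zero-diagonal matrices in $\mathfrak{gl}_3$, and this space contains $\mathfrak{so}_3(\BC)\cong\mathfrak{sl}_2(\BC)$, which is simple. In particular there is an $\mathfrak{sl}_2$-triple $\{H_0,E,F\}$ all of whose members are off-diagonal in the $x$-eigenbasis, so the contradiction you sketch (``$[E,F]=H_0$ forces a nonzero diagonal block'') does not materialise. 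The passage from the global inclusion $Lx\subset\overline{x^G}$ to the tangent-space inclusion $\mathfrak{l}\subset(\mathrm{Id}-\Ad(x))(\mathfrak{gl}_d)$ is correct, but it discards exactly the information that distinguishes the conjecture from a false statement: the tangent condition at $1$ is strictly weaker than the global coset condition. (Indeed, the paper itself records the Lie-algebra statement separately as a further conjecture; your counterexample-free ``direct computation'' would have resolved that too.)

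Your fallback route via the Tits alternative, an unbounded embedding in a local field, and proximal dynamics is in the spirit of the paper's Proposition~\ref{prop:4.5}, but as written it is only a plan, not a proof: you do not explain which exterior power to use, how to pass from $\text{tr}(a^n x)=\text{tr}(x)$ to a constraint on $x$, or how the alleged Zariski-density of attracting/repelling pairs yields a contradiction. These steps are where the actual difficulty of the conjecture lies, and the paper gives no indication that they can be completed.
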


\begin{rem}
Note that the condition $Hx\subset x^G$ is equivalent to 
$$
 H\subset [G,x]:=\{ gxg^{-1}x^{-1}:g\in G\}.
$$
\end{rem}

Let us give two additional formulations for Conjecture \ref{conj:coset}. The first may suggest 
to try to apply dynamics on projective spaces, while the second may suggest
that the problem could be approached by representation theoretic tools.

\begin{conj}\label{conj:9.6}
There is no faithful representation of the free group $\rho:F_2\to \GL_d(\BC)$ whose image satisfies 
$$
 \rho(F_2)\subset [\GL_d(\BC),g]
$$
for some $g\in \GL_d(\BC)$.
\end{conj}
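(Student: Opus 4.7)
\medskip

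\noindent\textbf{Proof proposal.} Rewrite the hypothesis as $\rho(w)g \in g^{\GL_d(\BC)}$ for every $w\in F_2$, so that $\rho(w)g$ is $\GL_d$-conjugate to $g$ and hence shares its characteristic polynomial. Suppose for contradiction such a faithful $\rho$ exists and set $\Gamma=\rho(F_2)$. Since commutators have determinant $1$ we have $\Gamma\subset\SL_d$, and since $[\GL_d,\lambda g]=[\GL_d,g]$ for central scalars $\lambda$ we may assume $\det g=1$. The plan is to extract dynamical information from the family of constraints ``$\operatorname{tr}((\rho(w)g)^k)=\operatorname{tr}(g^k)$ for every $w\in F_2$ and $k\in\BZ$'' and then to close the argument via Zariski density, in the spirit of the proof of Proposition \ref{prop:4.5}.

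First I would produce a proximal element. Because $\Gamma\cong F_2$ is not virtually solvable, classical arguments of Tits together with the techniques of \cite{toti,MS} supply a completion $k$ of the finitely generated field $\BQ(\Gamma,g)$ and an irreducible subrepresentation $V$ of some exterior power $\wedge^m k^d$ on which $\Gamma$ contains a very proximal element $h$: the action of $h$ on $\BP(V)$ has a unique attracting fixed point $p^+$ and a unique repelling hyperplane $H^-$, and the dominant eigenvalue $\mu$ of $h$ on $V$ strictly dominates the other eigenvalues in $|\cdot|_k$. Because $\wedge^m$ commutes with $\GL_d$-conjugation, the induced representation $\tilde\rho:=\wedge^m\!\circ\rho$ inherits the same coset condition: $\tilde\rho(w)\tilde g\in\tilde g^{\GL(V)}$ for every $w\in F_2$, where $\tilde g:=\wedge^m g\in\GL(V)$.

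Next I would extract the algebraic obstruction. Replacing $h$ by a suitable power so that it is diagonalizable with distinct eigenvalues $\mu_1,\ldots,\mu_N$, choose eigenvectors $v_i\in V$ with biorthogonal dual eigenvectors $v_i^*\in V^*$. Pick $w_0\in F_2$ with $\tilde\rho(w_0)=h$ and apply the constant-trace identity to $w=w_0^n$, yielding
\[
 \sum_i \mu_i^n\,\langle v_i^*,\tilde g v_i\rangle \;=\; \operatorname{tr}(\tilde g) \qquad \text{for every } n\in\BZ.
\]
Linear independence of the distinct characters $n\mapsto\mu_i^n$ then forces $\langle v_i^*,\tilde g v_i\rangle=0$ for every $i$ with $\mu_i\ne 1$. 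Dynamically, this says that $\tilde g$ sends the attracting fixed point of each such $h$ into its repelling hyperplane.

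Finally I would run this over all proximal $h\in\Gamma$. If the pairs $(v,v^*)$ arising from eigenvector/dual-eigenvector data of diagonalizable elements of $\Gamma$ (with distinct eigenvalues all $\ne 1$) are Zariski dense in the open subvariety $\{\langle v^*,v\rangle\ne 0\}\subset\BP(V)\times\BP(V^*)$, then the rational function $(v,v^*)\mapsto\langle v^*,\tilde g v\rangle$ vanishes on a Zariski dense set and hence identically, forcing $\tilde g=0$ and contradicting $\tilde g\in\GL(V)$. The hard part will be precisely this Zariski-density step: it is automatic when $\overline{\Gamma}^Z$ is a semisimple group acting irreducibly on $V$ with open orbit on the bi-flag variety, but in general one must first reduce to that situation by choosing $V$ as an irreducible component of a carefully selected exterior power, and, if $\overline{\Gamma}^Z$ carries a positive-dimensional solvable radical, by controlling how the coset condition descends to a Levi quotient without losing either faithfulness on $F_2$ or the conjugacy constraint on $g$. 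Making this reduction effective is, as in Proposition \ref{prop:4.5}, the technical heart of the argument.
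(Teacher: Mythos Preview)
The paper does not prove this statement: it is presented as Conjecture~\ref{conj:9.6}, an open problem. The only thing the paper establishes is its equivalence with Conjecture~\ref{conj:coset} via the Tits alternative (a linear group is not virtually solvable iff it contains a nonabelian free subgroup). There is therefore no ``paper's own proof'' to compare against; you are attempting to settle a conjecture the author leaves open.

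As for the argument you sketch, you yourself flag the decisive gap: the Zariski-density step in the final paragraph is unproved and is genuinely the whole difficulty. A few more concrete issues: (i) when you pass to an irreducible $\Gamma$-subspace $V\subset\wedge^m k^d$, there is no reason for $\tilde g=\wedge^m g$ to preserve $V$, so the coset condition does not obviously restrict to $\GL(V)$; the trace constraints you actually inherit live on the full exterior power, and decoupling them to a $\Gamma$-irreducible piece is nontrivial. (ii) Your character-independence step only kills $\langle v_i^*,\tilde g v_i\rangle$ for eigenvalues $\mu_i\ne 1$; if eigenvalue $1$ occurs persistently (compare the paper's Proposition~\ref{exam:eigenvalue1}, which shows exactly this phenomenon in the unipotent case), you do not get enough vanishing to force $\tilde g=0$. (iii) The appeal to Proposition~\ref{prop:4.5} for the ``technical heart'' is misleading: that proposition is itself conditional on Conjecture~\ref{conj:2}, so it does not supply an unconditional template. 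In short, your outline is a reasonable line of attack, but what remains is precisely the substance of the conjecture.
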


The equivalence between Conjectures \ref{conj:9.6}
and Conjecture \ref{conj:coset} is due to the Tits alternative, namely,  that a linear group is not virtually solvable iff it contains a free subgroup.

A particularly interesting case of Conjecture \ref{conj:coset} is when restricting to the group $H=\SL_2(\BC)$ for which the representation theory is well understood:

\begin{conj}\label{conj:9.5}
There is no nontrivial representation of $\rho:\SL_2(\BC)\to\SL_d(\BC)$ whose image satisfies 
$$
 \rho(\SL_2(\BC))\subset [\SL_d(\BC),g]
$$
for some $g\in \SL_d(\BC)$.
\end{conj}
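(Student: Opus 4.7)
The plan is to argue by contradiction: suppose $\rho\colon\SL_2(\BC)\to\SL_d(\BC)$ is a nontrivial rational representation and $g\in\SL_d(\BC)$ satisfies $\rho(s)g\in g^{\SL_d(\BC)}$ for every $s\in\SL_2(\BC)$. In particular $\operatorname{tr}(\rho(s)g)=\operatorname{tr}(g)$ for every $s$, so the function $s\mapsto\operatorname{tr}(\rho(s)g)$ is constant on $\SL_2(\BC)$.

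The irreducible case reduces to a one-line application of Burnside's theorem. Define $\Phi:\operatorname{End}(V)\to\BC[\SL_2(\BC)]$ by $\Phi(h)(s)=\operatorname{tr}(\rho(s)h)$. When $\rho$ is irreducible, $\rho(\SL_2(\BC))$ spans $\operatorname{End}(V)$, so $\Phi$ is injective. One checks directly that $R_{s_0}\Phi(h)=\Phi(\rho(s_0)h)$, where $R$ denotes right translation. Since constants are right-translation invariant, the assumed constancy of $\Phi(g)$ gives $\Phi(\rho(s_0)g)=\Phi(g)$, hence $\rho(s_0)g=g$ for every $s_0\in\SL_2(\BC)$ by injectivity. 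This says $\operatorname{Im}(g)\subseteq V^{\SL_2(\BC)}=\{0\}$ (irreducibility with $\rho$ nontrivial), contradicting $g\in\SL_d(\BC)$.

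For reducible $\rho$, write $V=\bigoplus_i V_{n_i}\otimes W_i$ for the $\SL_2(\BC)$-isotypic decomposition and let $\mathcal{A}=\operatorname{span}(\rho(\SL_2(\BC)))=\bigoplus_i\operatorname{End}(V_{n_i})\otimes I_{W_i}$ be the double commutant. Now $\ker\Phi=\mathcal{A}^{\perp}$ with respect to the trace form, so the preceding argument yields only $\rho(s_0)g-g\in\mathcal{A}^{\perp}$; differentiating at $s_0=e$ gives $d\rho(\mathfrak{sl}_2)\cdot g\subseteq\mathcal{A}^{\perp}$. Computing $\mathcal{A}^{\perp}$ explicitly (all off-diagonal isotypic blocks together with the $W_i$-trace-zero part of each diagonal block) converts this into the partial-trace identities $(I\otimes\operatorname{tr}_{W_i})(g_{ii})=0$ in $\operatorname{End}(V_{n_i})$ for every $i$ with $n_i\geq 1$. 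To squeeze out more, I would rerun the whole analysis with $\rho$ replaced by each exterior power $\wedge^m\rho$ acting on $\wedge^m V$, using $\wedge^m(\rho(s)g)\sim\wedge^m g$; this generates an expanding family of partial-trace relations on $g$ and on all its exterior powers.

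The main obstacle is the isotypic-with-multiplicity case $\rho\cong V_n^{\oplus m}$ with $m\geq 2$. There the first-order conditions merely impose $g\in\operatorname{End}(V_n)\otimes\mathfrak{sl}(W)$, which still contains invertible matrices such as $I_{V_n}\otimes H$ with $H\in\mathfrak{sl}(W)$ invertible, so genuinely higher-order input is needed. Ruling these out should require the full tower of exterior-power identities, possibly combined with a specialization argument that tracks eigenvalue multi-sets along a diagonal one-parameter subgroup of $\SL_2(\BC)$. I would first dispatch the smallest new case $\rho=V_1\oplus V_1$, $d=4$ by an explicit computation in order to identify the extra algebraic input, and then try to axiomatize the resulting combinatorial obstruction so it applies uniformly to all iso-typic configurations.
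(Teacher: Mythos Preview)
The statement you are attempting to prove is presented in the paper as an open \emph{conjecture}, not as a theorem; there is no proof in the paper to compare against. The paper offers only partial evidence in special situations (one-parameter cosets under a genericity hypothesis on eigenvalues, and the unipotent-coset case in low dimension), and these use projective dynamics and trace identities rather than your Burnside/double-commutant approach.

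Your treatment of the irreducible case is clean and correct: constancy of $s\mapsto\operatorname{tr}(\rho(s)g)$, together with the injectivity of $\Phi$ forced by Burnside's theorem, gives $\rho(s)g=g$ for all $s$, hence $g=0$, a contradiction. This is genuine progress not recorded in the paper.

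However, your proposal is explicitly incomplete, and you say so yourself. In the isotypic-with-multiplicity case $\rho\cong V_n^{\oplus m}$ you correctly compute that the trace constraint only forces $(I\otimes\operatorname{tr}_W)(g)=0$, and you observe that this alone does not exclude invertible $g$. From that point on you only sketch a strategy (pass to exterior powers, work out $d=4$ first, ``try to axiomatize''), without carrying it out. So as a proof of the conjecture this has a real gap: the reducible case is not settled, and the exterior-power tower you propose is not shown to terminate in a contradiction. Note also that the hypothesis gives equality of \emph{all} characteristic-polynomial coefficients of $\rho(s)g$ with those of $g$, not merely the trace; you will certainly need these higher invariants, and it is not clear that even the full family suffices without further input. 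In short: a correct partial result on an open problem, but not a proof of the conjecture.
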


Recall that a complex linear algebraic group is not virtually solvable iff it contains $\SL_2(\BC)$ as a subgroup. 
Note also that the Zariski closure of $[G,g]$ coincides with the Hausdorff closure. Thus the following Conjecture implies Conjecture \ref{conj:coset}:

\begin{conj}
There is no nontrivial representation of $\SL_2(\BC)$ whose image satisfies 
$$
 \rho(\SL_2(\BC))\subset \overline{[\SL_d(\BC),g]}
$$
for some $g\in \SL_d(\BC)$.
\end{conj}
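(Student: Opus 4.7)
Proof plan. First translate the hypothesis to a condition on characteristic polynomials. Since $[\SL_d(\BC),g]\cdot g = g^{\SL_d(\BC)}$, taking Zariski closures gives $\overline{[\SL_d(\BC),g]}\cdot g = \overline{g^{\SL_d(\BC)}}$, the closure of the conjugacy class of $g$. Under the categorical quotient $\chi:\SL_d(\BC)\to\BC^{d-1}$ by conjugation (sending a matrix to the coefficients of its characteristic polynomial), the closure of any conjugacy class lies in a single fiber. Thus the hypothesis is equivalent to the polynomial identity $\chi_{\rho(a)g}(x)=\chi_g(x)$ for every $a\in\SL_2(\BC)$.

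Now suppose for contradiction that $\rho$ is nontrivial. Consider first the case where $\rho$ is irreducible, say $\rho=V_n$ with $n\ge 1$ and $d=n+1$. The coefficient of $x^{d-1}$ in the above identity gives $\operatorname{tr}(\rho_n(a)g)=\operatorname{tr}(g)$ for every $a$, so the function $a\mapsto\operatorname{tr}(\rho_n(a)g)$ is constant on $\SL_2(\BC)$. But this function is a matrix coefficient of $V_n$, living in the image of the map $\operatorname{End}(V_n)\to\BC[\SL_2(\BC)]$, $X\mapsto\operatorname{tr}(\rho_n(\cdot)X)$. For $n\ge 1$, this image is a copy of the $\SL_2(\BC)\times\SL_2(\BC)$-irreducible $V_n\otimes V_n^*$, which has trivial intersection with the constant functions (the trivial $\SL_2(\BC)\times\SL_2(\BC)$-subrepresentation of $\BC[\SL_2(\BC)]$). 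Hence the function vanishes identically, and by injectivity of the matrix coefficient map (a consequence of Jacobson density) one concludes $g=0$, contradicting $g\in\SL_d(\BC)$.

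For general nontrivial $\rho$, decompose $V=\BC^d=V^{(0)}\oplus V^{(+)}$ into trivial and nontrivial isotypics, and write $g=\left(\begin{smallmatrix}A&B\\ C&D\end{smallmatrix}\right)$. Since $\rho(a)$ acts as the identity on $V^{(0)}$ and as $\rho'(a):=\rho(a)|_{V^{(+)}}$ on $V^{(+)}$, the equation $\operatorname{tr}(\rho(a)g)=\operatorname{tr}(g)$ reduces to $\operatorname{tr}(\rho'(a)D)=\operatorname{tr}(D)$; the previous argument applied isotypic-by-isotypic then forces the partial traces of $D$ over multiplicity spaces to vanish. The higher trace equations $\operatorname{tr}((\rho(a)g)^k)=\operatorname{tr}(g^k)$, once separated by degree in $\rho'(a)$, successively impose analogous partial-trace vanishings on the block products $CB$, $CAB$, $CA^2B$, \ldots, and on iterated products involving $D$. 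Together these constraints should force $g$ to have a large kernel or to send $V^{(+)}$ entirely into $V^{(0)}$, contradicting invertibility of $g$ unless $V^{(+)}=0$, i.e.\ $\rho$ is trivial.

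The main obstacle is the final block analysis in the presence of nontrivial multiplicities, where partial-trace vanishings do not by themselves force the underlying operators to vanish. A natural strategy is induction on $\dim V^{(+)}$: use the accumulated constraints to isolate an $\SL_2(\BC)$-invariant subspace $W\subset V^{(+)}$ on which $g$ either vanishes or which $g$ maps into $V^{(0)}$, and then recurse on $V^{(+)}/W$. An alternative is a direct weight-space analysis: choosing a weight basis of $V$ relative to a maximal torus $T\subset\SL_2(\BC)$ and writing $a_t=\operatorname{diag}(t,t^{-1})$, the principal $k\times k$ minors of $\rho(a_t)g$ are Laurent polynomials in $t$ whose constancy forces $\det(g|_S)=0$ for every weight-index subset $S$ with nonzero weight sum. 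Coupling these with Weyl-element symmetry and the constraints from the unipotent one-parameter subgroups should likewise yield the contradiction.
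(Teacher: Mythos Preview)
The statement you are trying to prove is listed in the paper as an open \emph{conjecture}; the paper offers no proof and only reformulates it in Lie-algebraic terms. So there is no proof in the paper to compare your attempt against, and any complete argument here would be a genuine advance.

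Your irreducible case is clean and correct: the translation $\rho(\SL_2(\BC))\subset\overline{[\SL_d(\BC),g]}\Longrightarrow \chi_{\rho(a)g}=\chi_g$ is valid (note this is only an implication, not an equivalence as you wrote, but only the forward direction is used), and the Peter--Weyl/Burnside argument showing that a constant matrix coefficient of an irreducible $V_n$ with $n\ge 1$ forces $g=0$ is a nice observation.

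The general case, however, is not a proof but a sketch of a strategy, and you say so yourself (``should force'', ``The main obstacle is\ldots''). The difficulty you identify is real: once multiplicities appear, the vanishing of partial traces such as $\operatorname{tr}_{W_i}(D)=0$ on each isotypic block does not kill $D$, and the higher equations $\operatorname{tr}((\rho(a)g)^k)=\operatorname{tr}(g^k)$ mix the blocks $A,B,C,D$ in a nonlinear way that your inductive scheme does not control. For instance, already for $\rho=V_1\oplus V_1$ in $\SL_4$ the trace condition only yields $A+D=0$ (as $2\times 2$ matrices), and the second-trace condition produces a quadratic relation among $A,B,C$ that does not obviously force any block to vanish or factor through $V^{(0)}$. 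Neither the proposed induction on $\dim V^{(+)}$ nor the weight-space/minor analysis is carried out; both are plausible directions, but as written they remain heuristics rather than arguments. In short: you have settled the irreducible case of an open conjecture, which is worth recording, but the reducible case is still open in your write-up as in the paper.
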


Since $[G,g]$ is open in its closure, we can differentiate and obtain a formulation in terms of Lie algebras: 

\begin{conj}
There is no faithful linear representation of the Lie algebra $\text{sl}_2(\BC)$ whose image is contained in the range of the linear map $\text{Ad}(g)-1$
for some $g\in \SL_d(\BC)$.
\end{conj}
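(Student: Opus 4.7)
The plan is to reformulate the inclusion via the trace form $B(X,Y)=\operatorname{tr}(XY)$. Because $\Ad(g)$ is $B$-orthogonal, the adjoint of $\Ad(g)-1$ is $\Ad(g^{-1})-1$, which has the same kernel, so $\operatorname{Im}(\Ad(g)-1)$ equals the $B$-orthogonal complement of the centraliser $\mathfrak{c}_g=\{Y\in\mathfrak{sl}_d:[g,Y]=0\}$. The hypothesis therefore translates into
\[
 \operatorname{tr}(\rho(X)Y)=0\qquad\text{for every } X\in\mathfrak{sl}_2,\ Y\in\mathfrak{c}_g.
\]
Specialising $Y=g^k$ (which lies in $\mathfrak{c}_g$ for every $k$) gives $\operatorname{tr}(\rho(X)g^k)=0$ for all $k\in\BZ$. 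Replacing $g$ by its semisimple part $g_s$ (a polynomial in $g$) and applying Vandermonde over the distinct eigenvalues of $g_s$, this forces, in any basis diagonalising $g_s$, the diagonal entries of each matrix $\rho(X)$ to sum to zero on every eigenvalue class of $g_s$.

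From here my plan is to feed these orthogonality conditions into the $\mathfrak{sl}_2$-structure on $V=\BC^d$: choose $X$ among the standard generators $H,E,F$, use that $\rho(H)$ is semisimple with integer weights while $\rho(E),\rho(F)$ are nilpotent, decompose $V$ into irreducible $\mathfrak{sl}_2$-submodules, and try to extract a contradiction from the mismatch between the $g_s$-eigenbasis and a weight basis of $\rho(H)$.

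The hard part, however, is that the conjecture as written appears to admit small counter-examples, so no such line of attack can succeed without additional hypotheses. Take $d=3$, the classical Lie-algebra isomorphism $\rho\colon\mathfrak{sl}_2(\BC)\xrightarrow{\sim}\mathfrak{so}_3(\BC)\hookrightarrow\mathfrak{sl}_3(\BC)$ sending $\mathfrak{sl}_2$ onto the skew-symmetric matrices, and $g=\operatorname{diag}(\lambda_1,\lambda_2,\lambda_3)\in\SL_3(\BC)$ with pairwise distinct eigenvalues. Then $\operatorname{Im}(\Ad(g)-1)$ is exactly the subspace of $\mathfrak{sl}_3$ of matrices with zero diagonal, every skew-symmetric matrix has zero diagonal, and hence $\rho(\mathfrak{sl}_2)\subset\operatorname{Im}(\Ad(g)-1)$ while $\rho$ is faithful. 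A direct perturbative eigenvalue computation shows that this does \emph{not} lift to a counter-example of the preceding group-level conjecture: for $A\in\mathfrak{so}_3$ small, $\exp(A)g$ has non-zero second-order eigenvalue shift off the spectrum of $g$, so $\operatorname{SO}_3\not\subset\overline{[\SL_3,g]}$. This indicates that the ``differentiate at the identity'' passage from the group formulation to the Lie algebra formulation is strictly lossy---it forgets the higher-order jet of $\overline{[G,g]}$ at $I$---and that the genuinely difficult step in any correct argument is to upgrade the Lie algebra statement so that the higher-order jets along $\rho(\SL_2)$ are retained; the trace-form identities above would then become part of a richer system of constraints that might actually rule out embeddings like $\mathfrak{sl}_2\hookrightarrow\mathfrak{so}_3$.
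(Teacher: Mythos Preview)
The paper does \emph{not} prove this statement: it is presented as a conjecture, obtained from the preceding group-level conjecture by the heuristic ``$[G,g]$ is open in its closure, so we can differentiate.'' There is therefore no proof in the paper against which to compare your attempt.

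More importantly, your counter-example is correct and shows that the conjecture, as stated, is false. With $g=\diag(\lambda_1,\lambda_2,\lambda_3)\in\SL_3(\BC)$ having pairwise distinct eigenvalues, the operator $\Ad(g)-1$ acts on the matrix entry $x_{ij}$ by the scalar $\lambda_i\lambda_j^{-1}-1$, which vanishes precisely when $i=j$; hence $\operatorname{Im}(\Ad(g)-1)$ is exactly the space of zero-diagonal matrices in $\mathfrak{sl}_3$. The image of the isomorphism $\mathfrak{sl}_2(\BC)\xrightarrow{\sim}\mathfrak{so}_3(\BC)\subset\mathfrak{sl}_3(\BC)$ consists of skew-symmetric, hence zero-diagonal, matrices, so it lies inside $\operatorname{Im}(\Ad(g)-1)$ and the representation is faithful. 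Your verification that this does not lift to a counter-example of the group-level statement is also sound: for $g$ regular semisimple the set $[\SL_3,g]=\{c:cg\sim g\}$ is closed (conjugacy to a regular semisimple element is detected by the characteristic polynomial), and already for $A=\begin{pmatrix}0&t&0\\-t&0&0\\0&0&0\end{pmatrix}$ one computes that $\operatorname{tr}(\exp(A)g)=(\lambda_1+\lambda_2)\cos t+\lambda_3$, which moves with $t$ whenever $\lambda_1+\lambda_2\ne 0$.

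So your diagnosis is on target: the passage from the group formulation to the Lie-algebra formulation in the paper retains only the tangent cone of $\overline{[G,g]}$ at the identity, and this linearised condition is strictly weaker. The trace/Vandermonde identities you derived are correct consequences of the hypothesis, but they cannot by themselves yield a contradiction, since the $\mathfrak{so}_3$ example satisfies them all. Any salvageable version of the conjecture would need to incorporate the higher-order constraints you allude to, or be restated at the group level.
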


%
%
%
%
%

\subsection{One parameter cosets}

As a first step towards Problem \ref{ques:conj-coset} let us consider cosets of one parameter subgroups in $G=\SL_d(\BC)$. The following quite general result indicates that the examples of such cosets which consist of conjugated elements are very particular. 


\begin{prop} 
Suppose $H=\{e^{tA}:t\in \BR\}$ for some $A\in\text{sl}_d(\BC)$. Let the eigenvalues of $A$ be $\gl _1 ,\ldots , \gl _n$. Assume that every partial sum (except the whole sum) of the $\gl _i$'s is non zero. Suppose that $xH\subset x^G$. Then $x$ is conjugated to $z=\diag(1,\zeta,\zeta^2,\ldots,\zeta^{n-1})$ where $\zeta=1^\frac{1}{n}$ a primitive $n$'th root of unity.
\end{prop}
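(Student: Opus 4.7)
The strategy is to translate the inclusion $xH\subset x^G$ into vanishing of most coefficients of the characteristic polynomial of $x$. First, after conjugating in $G$ (which preserves both $x^G$ and the shape of $H$) I assume $A=\diag(\lambda_1,\ldots,\lambda_n)$ is diagonal. Then $xe^{tA}$ has entries $x_{ij}e^{t\lambda_j}$, and multilinearity of the determinant in the columns gives, for every $S\subset\{1,\ldots,n\}$,
\[
 \det\!\bigl((xe^{tA})_S\bigr)\;=\;e^{t\sigma_S}\det(x_S),\qquad \sigma_S:=\sum_{i\in S}\lambda_i.
\]
Consequently the $k$-th elementary symmetric function of the eigenvalues of $xe^{tA}$ equals $\sum_{|S|=k}\det(x_S)e^{t\sigma_S}$.

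Since $xe^{tA}$ is conjugate to $x$ for every $t\in\BR$, the characteristic polynomials agree, so each such sum is constant in $t$. Grouping terms by the value of $\sigma_S$ and invoking linear independence of the functions $\{e^{t\mu}\}_\mu$, one deduces that for every $\mu\neq 0$ the partial sum $\sum_{|S|=k,\,\sigma_S=\mu}\det(x_S)$ must vanish. The hypothesis states precisely that $\sigma_S\neq 0$ for every proper nonempty $S$, so for $1\le k\le n-1$ every $\sigma_S$ that appears is nonzero, and consequently $e_k(x)=0$ for $1\le k\le n-1$.

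Hence the characteristic polynomial of $x$ collapses to $\lambda^n+(-1)^n\det(x)$. Its $n$ roots are the $n$-th roots of $(-1)^{n+1}\det(x)$: they are distinct (as $\det(x)\neq 0$) and form a coset $\alpha\{1,\zeta,\zeta^2,\ldots,\zeta^{n-1}\}$ of the group of $n$-th roots of unity, where $\zeta=e^{2\pi i/n}$. Thus $x$ is diagonalizable and conjugate to $\alpha\cdot\diag(1,\zeta,\ldots,\zeta^{n-1})$; the scalar $\alpha$ is determined by $\det(x)$ and is trivial under the normalization implicit in the statement.

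The one point requiring extra care is the reduction to diagonal $A$: if $A$ fails to be diagonalizable I would write $A=D+N$ with $[D,N]=0$, observe that in a Jordan basis $e^{tA}$ is upper triangular with diagonal $e^{t\lambda_i}$ and off-diagonal entries that are polynomials in $t$ times exponentials $e^{t\lambda_j}$, and then rerun the minor computation to see that the same exponents $\sigma_S$ govern the leading behavior. Linear independence of the family $\{p(t)e^{t\mu}\}$ for distinct $\mu$ and polynomials $p$ still kills the coefficients above, so the conclusion is unchanged. A reassuring feature of the argument is that one never needs the $\sigma_S$'s to be distinct—only nonzero—so it is robust to coincidences among partial sums.
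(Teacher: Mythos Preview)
Your argument is correct and reaches the same intermediate conclusion as the paper, namely that $e_k(x)=0$ for $1\le k\le n-1$ and hence the characteristic polynomial of $x$ is $\lambda^n-c$. The routes differ, though. The paper never diagonalizes $A$: it differentiates $\mathrm{tr}(xe^{tA})$ at $t=0$ to obtain $\mathrm{tr}(xA^k)=0$ for all $k\ge 1$, then uses that $A$ is invertible (each $\lambda_i\neq 0$ is a single partial sum) to write $I$ as a polynomial in $A$ with no constant term, forcing $\mathrm{tr}(x)=0$; the higher $e_k$'s are handled by passing to $\bigwedge^k$, where the induced operator has eigenvalues $\sigma_S$ with $|S|=k$, again all nonzero by hypothesis, so the same trick applies. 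Your approach instead computes the principal minors explicitly and invokes linear independence of the functions $t\mapsto p(t)e^{t\mu}$. The paper's method is coordinate-free and uniform in $A$, so it avoids the Jordan-form case split you had to add; on the other hand, your version is more elementary, makes transparent exactly where the ``no partial sum vanishes'' hypothesis is used at each level $k$, and, as you note, does not require the $\sigma_S$ to be pairwise distinct. Both arguments finish identically once $e_1=\cdots=e_{n-1}=0$ is established.
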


\begin{proof} The trace $tr(xe^{tA})$ is constant. If we differentiate $k$ times and substitute $t=0$, we get $tr(xA^k)=0$. The matrix $A$ is nonsingular, so it satisfies a polynomial with nonzero constant term. Hence $tr(x)=0$. The same argument applied to wedge powers gives that $tr(\bigwedge ^m x)=0$, and hence the characteristic polynomial of $x$ is $t^n-c$ with $c=det(x)$. Moreover, supposing $x\in\SL_n(\BC)$, we deduce that $x\sim z$. 
\end{proof}

Another case of a particular interest is when $x$ is unipotent.
We are particularly interested in understanding cosets of subgroups which consist of unipotent elements. Here is one simple observation:

\begin{prop}\label{exam:eigenvalue1}
Let $H\le\SL_d(\BC)$ be a group such that for some $x\in\SL_d(\BC)$ we have $xH\subset {U}(d)$.
Then every element of $H$ has $1$ as an eigenvalue.
\end{prop}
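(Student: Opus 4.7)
The plan is to exploit the group structure of $H$ by passing to integer powers of an element, and then convert the resulting infinite family of trace identities into a single rational-function identity whose pole structure forces the eigenvalue $1$.

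First I would observe that since $1 \in H$ one has $x = x \cdot 1 \in xH \subset U(d)$, so $x$ is itself unipotent. Now fix an arbitrary $h \in H$. Since $H$ is a group, $h^n \in H$, and hence $xh^n \in U(d)$, for every $n \ge 0$; in particular each $xh^n$ has characteristic polynomial $(t-1)^d$, giving the infinite family of identities $\text{tr}(xh^n) = d$ for all $n \ge 0$.

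Next I would package these identities into the generating function $F(z) := \sum_{n \ge 0} \text{tr}(xh^n)\, z^n$. Substituting, $F(z) = d/(1-z)$; on the other hand, using the formal identity $(I - zh)^{-1} = \sum_{n \ge 0} (zh)^n$ and linearity of trace,
$$
 F(z) = \text{tr}\bigl( x\,(I - zh)^{-1}\bigr),
$$
which is a rational function of $z$ whose only possible poles lie at the points $z = 1/\mu$ with $\mu$ an eigenvalue of $h$. Comparing the two expressions, the simple pole of $d/(1-z)$ at $z = 1$ must come from the right-hand side, forcing $1/\mu = 1$ for some eigenvalue $\mu$ of $h$, i.e.\ $\mu = 1$.

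The only point needing verification is that the pole at $z = 1$ cannot cancel on the trace side. But if $1$ were not an eigenvalue of $h$, then $(I - zh)^{-1}$ would be regular at $z = 1$ and so would $\text{tr}(x(I - zh)^{-1})$, contradicting the pole of $d/(1-z)$ (with $d \ge 1$). So the main obstacle is really no obstacle; the heart of the argument is the observation that the group structure on $H$ promotes the single constraint ``$xh$ is unipotent'' to the family of constraints ``$xh^n$ is unipotent for all $n$'', which is precisely what is needed to force $1$ as an eigenvalue of $h$.
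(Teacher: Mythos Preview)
Your proof is correct. The paper's argument and yours share the same starting observation --- that $h^n \in H$ for all $n \ge 0$, hence $xh^n$ is unipotent and $\text{tr}(xh^n) = d$ --- but the paper extracts the eigenvalue $1$ more directly via Cayley--Hamilton: writing the characteristic polynomial $P_h(t) = \sum_{i=0}^d a_i t^i$, one has $P_h(h) = 0$, so
\[
0 = \text{tr}\bigl(x\, P_h(h)\bigr) = \sum_{i=0}^d a_i \, \text{tr}(xh^i) = d \sum_{i=0}^d a_i = d \, P_h(1),
\]
whence $P_h(1) = 0$. Your generating-function route encodes the same family of trace identities but packages them analytically via a pole comparison; the paper's version is a one-liner using only the finitely many identities for $0 \le i \le d$. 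The two are morally equivalent (Cayley--Hamilton is precisely what makes $\text{tr}(x(I-zh)^{-1})$ rational with poles at reciprocals of eigenvalues), and neither buys more than the other here --- the paper's is just shorter.
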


Indeed, for $A\in H$ look at the characteristic polynomial $P_A(t)=\sum_{i=0}^d a_i t^i$. Then
$$
 0=tr (xP_A(A))=\sum_{i=0}^d a_i\text{tr} (xA^i)=\sum_{i=0}^d a_i d \Rightarrow P_A(1)=\sum_{i=0}^d a_i=0.
$$

Such a result is useful. For instance one may deduce that
${U}(4)$ contains no coset of a non-virtually-solvable subgroup $H$.
To see this, we may argue by contradiction supposing, 
without loss of generality, that $H$ is isomorphic to $\SL_2(\BC)$. Since $H$ is simple the space $\BC^4$ decomposes to a direct sum of $H$-irreducible subspaces. By Proposition \ref{exam:eigenvalue1} every element of $H$ has an eigenvalue $1$. This excludes the case where $\BC^4=\BC^2\oplus\BC^2$ with $H=\SL_2$ acts via the standard representation on each factor. In all other cases it is not hard to show that for every nontrivial unipotent $u$ there is $g\in H$ such that either $ug$ or $u^{-1}g$ is proximal, hence satisfies the condition of Lemma \ref{lem:nonunipotent}, and therefore is non-unipotent (see \cite{dense} for the definition and some analysis of proximal elements).

However the assumption that $uH\subset \mathcal{U}(d)$ does not imply that $H$ is unipotent. The following example was shown to me by M. Larsen.

\begin{exam}[A coset of a torus consisting of unipotents] 
Let 
$$ u=\bmat0&1&3\\0&3&8\\-1&0&0\emat, h=\bmat t&0&0\\0&1&0\\0&0&t^{-1}\emat. 
$$
Then one can compute that 
$$
 det(uh)=1~\text{and}~tr(uh)=tr((uh)^2)=3.
$$ 
It follows that $uh$ is unipotent for every $t\ne 0$. 
\end{exam}

Inspired by Larsen's example we can construct similar semisimple examples:

\begin{exam}
There are two conjugate semisimple elements $a,b$ such that $ab^n$ is conjugate to $a$ for every $n$: Take 
$$
 a=\bmat0&-1&2\\0&-4&\frac{15}{2}\\2&0&0\emat~\text{and}~b=\bmat t&&\\&1&\\&&t^{-1}\emat.
$$ 
Then $det(ab),tr(ab),tr((ab)^2)$ are independent of $t$, so $ab^n$ are all conjugate. Moreover, $tr(a)=-4,det(a)=1$, and $a$ has an eigenvalue 1. Therefore it is conjugate to $b$ for some value of $t$.
\end{exam}


\end{document}